\documentclass{amsart}
\usepackage[utf8]{inputenc}
\usepackage[english]{babel}
\usepackage[all]{xy}
\usepackage{graphicx}
\usepackage{amsmath,amsthm,bm,mathrsfs,amscd,tikz}
\usepackage{amsfonts,amssymb,mathrsfs}
\usepackage{verbatim}
\usepackage{float}
\usepackage{enumerate}
\RequirePackage{color}
\usepackage{xcolor}
\usepackage{tikz}
\usetikzlibrary{arrows.meta}
\usetikzlibrary{patterns}
\usetikzlibrary{matrix, arrows, decorations.pathmorphing}
\usepackage{soul}
\usepackage[pdftex,colorlinks,linkcolor  = black,citecolor=blue]{hyperref}
\usepackage{pdfpages}
\usepackage{tabu}

\theoremstyle{plain}
\newtheorem{theorem}{Theorem}[section]
\newtheorem{proposition}[theorem]{Proposition}
\newtheorem{corollary}[theorem]{Corollary}
\newtheorem{lemma}[theorem]{Lemma}
\newtheorem{conjecture}[theorem]{Conjecture}
\newtheorem{remark}[theorem]{Remark}

\definecolor{job}{RGB}{200,65,0}

\definecolor{bettergreen}{RGB}{0,155,0}

\newtheorem{thm}{Theorem}

\theoremstyle{definition}
\newtheorem{definition}[theorem]{Definition}
\newtheorem{example}[theorem]{Example}

\newcommand{\Ext}{\operatorname{Ext}}
\newcommand{\Hom}{\operatorname{Hom}}
\newcommand{\supp}{\operatorname{supp}}
\newcommand{\Span}{\operatorname{span}}

\newcommand{\Rep}{{\rm Rep}}

\newcommand{\End}{{\rm End}}
\newcommand{\mmod}{{\rm mod}\,}

\newcommand{\rpwf}{{\rm rep}^{\rm pwf}}

\newcommand{\rfp}{{\rm rep}^{\rm fp}}
\newcommand{\rqnf}{{\rm rep}^{\rm qnf}}
\newcommand{\Mod }{{\rm Mod}}

\newcommand{\C}{\mathcal{C}}
\newcommand{\M}{\mathcal{M}}
\newcommand{\I}{\mathcal{I}}

\newcommand{\ResM}{\mathrm{Res}_{\mathcal{M}}}
\newcommand{\IndP}{\mathrm{Ind}_{\mathfrak{P}}}
\newcommand{\IndPM}{\mathrm{Ind}_{\mathfrak{P}_M}}
\newcommand{\Cbar}{\mathcal{A}}
\newcommand{\Pbar}{\overline{\mathcal{P}}}
\newcommand{\Chat}{\overline{\mathcal{A}}^{\mathfrak{P}}}
\newcommand{\Ihat}{\overline{\mathcal{I}}^{\mathfrak{P}}}
\def\id{\hbox{1\hskip -3pt {\sf I}}}

\newcommand{\oP}{\overline{\mathcal{P}}_\alpha}

\title{Categories of generalized thread quivers}
\date{\today}

\author{Charles Paquette}\address{Department of Mathematics and Computer Science, Royal Military College of Canada,
Kingston, ON K7K 7B4, Canada}
\email{charles.paquette.math@gmail.com}

\author{Job Daisie Rock}
\address{Department of Mathematics W16, Ghent University, 9000 Ghent, East Flanders, Belgium}
\email{job.rock@ugent.be}

\author{Emine Y{\i}ld{\i}r{\i}m}
\address{School of Mathematics, University of Leeds, Leeds, LS2 9JT, UK}
\email{emine.y.yildirim@gmail.com}

\begin{document}

\begin{abstract}
    We study the representation category of thread quivers and their quotients. A thread quiver is a quiver in which some arrows have been replaced by totally ordered sets. Pointwise finite-dimensional (pwf) representations of such a thread quiver admit a Krull--Remak--Schmidt--Azumaya decomposition.
    We show that an indecomposable representation is induced from an indecomposable representation of a quiver obtained from the original quiver by replacing some of its arrows by a finite linear $\mathbb{A}_n$ quiver.
    We study injective and projective pwf indecomposable representations and we fully classify them when the quiver satisfies a mild condition. 
    We give a characterization of the indecomposable pwf representations of certain categories whose representation theory has similar properties to finite type or tame type. We further construct new hereditary abelian categories, including a Serre subcategory of pwf representations that includes every indecomposable representation.
\end{abstract}

\maketitle

\setcounter{tocdepth}{1}
\tableofcontents

\section*{Introduction}

\subsection*{History}

We let $k$ denote a field, which we assume is algebraically closed.
This is mainly for simplicity, but also required in some applications and examples.
In representation theory of finite-dimensional algebras, a classical problem is to understand the category of finitely generated modules over a finite-dimensional (associative) $k$-algebra $A$.
A particularly successful approach to achieve this is to use categorical methods, as was done by Gabriel and his school; see \cite{GZ67}.
One can think of $A$ as a Hom-finite $k$-linear category with finitely many objects and with split idempotents.
Such a category is also called a finite spectroid (see \cite{GR97}). In this way, the category of left $A$-modules is the category ${\rm Fun}(A, {\rm Mod}(k))$ of covariant $k$-linear functors from $A$ to the category ${\rm Mod}(k)$ of $k$-vector spaces.
As shown by Gabriel, the category $A$ is given by a finite quiver with relations: one can find a unique finite quiver $Q$ such that the path category $kQ$ admits a surjective and full functor onto $A$, and the kernel of this functor is an ideal $I$ in $kQ$ called admissible.
That is, $A$ is the quotient $kQ / I$.

Categories given by infinite quivers also play an important role.
For instance, there has been a lot of work on studying an algebra using covering techniques, especially in the classification of self-injective algebras, the study of representation-finite and minimal representation-infinite algebras, and in the study of the Brauer-Thrall conjectures; see \cite{Bon} for a review of some of the techniques.
Given an algebra $A$, we can usually find its universal cover.
This is a functor $F:B \to A$, where $B$ is some Hom-finite $k$-linear category that is given by a (generally infinite) quiver with relations.
Again, this means that there exists a (possibly infinite) quiver $\Gamma$ such that $B$ is the quotient of the path category $k\Gamma$ by some nice ideal. 

Coming from a different direction, Berg and van Roosmalen introduced the concept of a \emph{thread quiver}, where some arrows of the quiver are replaced by locally discrete totally ordered sets \cite{BvR14}. Every element of such a totally ordered set has an immediate predecessor (except the minimum) and immediate successor (except the maximum).
They studied Auslander-Reiten theory in this setting.

For general spectroids (Hom-finite $k$-linear categories with splitting idempotents), it is not true that such a category is controlled by a quiver.
However, there are some cases where one could still use a quiver to understand a spectroid, even if the spectroid is not a quotient of a path category.
An extreme example where quivers seem irrelevant is the totally ordered set $\mathbb{R}$, which can be seen as a $k$-linear category in a natural way: the object set is the set of real numbers and there is a non-zero $\Hom$ space $\Hom(x,y)=k$ if and only if $x\leq y$.

Examples such as the one given above have gained popularity in the recent years. In persistence theory, especially in one-parameter persistence theory, one can study some features of data sets by associating to them a \emph{barcode}, which is a decomposition of a representation over the real line $\mathbb{R}$ into indecomposables \cite{CB15}. The knowledge of this decomposition allows one to extract some non-trivial information on the shape of the data set \cite{CSGO16}.
One could also study the representations of a closed real interval and think of this as an $\mathbb{A}_2$ quiver where the arrow is ``replaced" by the corresponding open real interval.

Replacing the arrow in an $\mathbb{A}_2$ quiver with a real interval does not fit in Berg and van Roosmalen's framework, since a real interval is not a locally discrete totally ordered set.
In the present paper, we start by taking an arbitrary quivers $Q$ and replacing each arrow with \emph{any} totally-ordered set.
One may consider the empty set, finite sets, locally discrete sets, the real numbers, or even sets with higher cardinalities.

 Multiparameter persistence theory from representation theoretic perspectives has gained popularity in Topological Data Analysis (TDA); see for instance \cite{BBOS20, BBH24, ABH24}. In $n$-parameter persistence theory, one is interested in pointwise finite-dimensional representations of posets, often $\mathbb{R}^n$.  The discrete analogue of $\mathbb{R}^n$ is the representation of the lattice $\mathbb{Z}^n$, which we can think of as the infinite $n$-dimensional grid quiver (with the proper commutativity relations, making all parallel paths equal). Thread quivers with relations allow us to get interesting subcategories of $\mathbb{R}^n$ that contain $\mathbb{Z}^n$, and with some continuous features. For instance, if we take the full subcategory of $\mathbb{R}^2$ consisting of all vertical and horizontal lines containing lattice points, then we end up with a continuous threading of the infinite grid quiver, with an admissible ideal of relations. Many locally discrete posets, like $\mathbb{Z}^n$, can be represented by thread quivers. The theory that we develop in this paper allows us to understand the representation theory of various such posets.

\subsection*{Outline and contributions}
Section~\ref{sec:threads} gives an introduction to our new definition of thread quivers. A thread quiver $(Q,\mathcal{P})$ is a quiver $Q$ paired with a collection of totally ordered sets $\mathcal{P}_\alpha$, one for each arrow $\alpha$ in $Q$.
We define the path category $\C$ of a thread quiver, by ``replacing'' each arrow $\alpha$ with the set $\mathcal{P}_\alpha$ (Section~\ref{sec:path category}), and consider the pointwise finite-dimensional representations.
We define noise representations, which are entirely supported on $\C$ within one arrow $\alpha$ of $Q$, and noise free representations, which contain no noise direct summands (Definition~\ref{def:noise-and-noise-free}).

To be more general, we then introduce (weakly) admissible ideals $\I$ and consider quotients $\Cbar = \C/\I$ in Section~\ref{sec:quotients}.
We consider some partitions of the objects of $\Cbar$ into intervals. For each such partition, there exists a corresponding subthreading obtained by selecting the vertices of $Q$ and, for each arrow $\alpha$ of $Q$, some elements of $\mathcal{P}_\alpha$ determined by the partition.
We say that the subthreading is finite when we choose finitely many points from each $\mathcal{P}_\alpha$.
We then study the notion of completion $\overline{\I}$ of the ideal $\I$ with respect to a partition.

In Section~\ref{sec:functors} we use the partitions and completions to study certain Verdier localizations of the corresponding category $\overline{\mathcal{A}}  = \mathcal{A}/\overline{\I}$.
We conclude Section~\ref{sec:functors} by showing that the subcategory corresponding to the choice of subthreading is equivalent to the induced localization (Corollary~\ref{cor:equivalence from localized to sample on reps}).
This allows us to define an induction functor from representations of a finite subthreading of $\overline{\mathcal{A}}$ to representations of $\overline{\mathcal{A}}$.
We use these induction functors along with the usual restriction functors in Section~\ref{sec:2ndDecom} in order to study representations of $\Cbar$.

In Section~\ref{sec:2ndDecom}, we explain how to decompose any noise free representation into indecomposable representations. Such an indecomposable representation is obtained (induced) from an indecomposable representation of a finite subthreading of $Q$. Our first decomposition theorem, combined with this, yields the following. 

\begin{thm}[Theorems~\ref{thm:noise and noise free decomposition} and~\ref{thm:second decomposition theorem}]
    Let $M$ be a pointwise finite-dimensional representation of $\Cbar$. Then we have the following.
    \begin{enumerate}
        \item $M$ decomposes uniquely into indecomposable representations.
        \item Each indecomposable summand of $M$ is either noise or it is noise free and induced from an indecomposable representation of a finite subthreading of $\Cbar$.
        \item If $Q$ is finite, then there are only finitely many indecomposable summands of $M$ that are noise free.
    \end{enumerate}
\end{thm}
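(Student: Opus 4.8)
The plan is to read off (1)--(3) by combining Theorem~\ref{thm:noise and noise free decomposition} (the noise/noise-free splitting) with Theorem~\ref{thm:second decomposition theorem} (the decomposition of a noise free representation), and then to supply two further ingredients: an endomorphism-ring argument that upgrades the resulting decomposition to a genuine Azumaya (infinite Krull--Remak--Schmidt) decomposition, and a pointwise-finiteness count that yields the finiteness in (3).

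For (1) and (2): Theorem~\ref{thm:noise and noise free decomposition} gives $M\cong M_{\mathrm n}\oplus M_{\mathrm{nf}}$ with $M_{\mathrm n}$ a direct sum of indecomposable noise representations and $M_{\mathrm{nf}}$ noise free, and Theorem~\ref{thm:second decomposition theorem} gives $M_{\mathrm{nf}}\cong\bigoplus_{j}L_j$ with each $L_j$ indecomposable, noise free, and induced from an indecomposable representation $L_j'$ of a finite subthreading of $\Cbar$. An indecomposable noise representation is a pwf representation of the totally ordered set of thread points interior to one arrow, hence an interval representation with endomorphism ring $k$ by \cite{CB15}. Each $L_j$ is an indecomposable pwf representation, and one checks $\End(L_j)$ is local: either via the pwf Krull--Remak--Schmidt--Azumaya property of the categories in question, or, using that the induction functor is fully faithful (it is the equivalence of Corollary~\ref{cor:equivalence from localized to sample on reps} followed by a fully faithful embedding into the representations of $\Cbar$), by transporting the question to $\End(L_j')$ inside the Hom-finite finite subthreading and invoking Fitting's lemma. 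In either case $M$ is a direct sum of objects with local endomorphism rings, so Azumaya's theorem gives that this decomposition into indecomposables is unique up to isomorphism and reordering of summands, proving (1). For (2), by the uniqueness in (1) every indecomposable direct summand of $M$ is isomorphic to one of the summands above, each of which is a noise representation or one of the $L_j$.

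For (3), assume $Q$ is finite. The key point is the support lemma: \emph{if $N$ is an indecomposable pwf representation of $\Cbar$ with $N(v)=0$ for all $v\in Q_0$, then $N$ is noise.} Indeed $N$ is then supported on the union of the thread points of the arrows of $Q$, and any morphism of $\Cbar$ between thread points of two distinct arrows factors through a vertex of $Q$, hence acts as $0$ on $N$; consequently $N\cong\bigoplus_{\alpha\in Q_1}N_\alpha$, where $N_\alpha$ is the restriction of $N$ to the thread points of $\alpha$ extended by $0$, and indecomposability forces exactly one $N_\alpha$ to be nonzero, so $N$ is supported inside a single arrow, i.e.\ noise. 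Applying the contrapositive, every noise free indecomposable summand $N$ of $M$ satisfies $N(v)\neq 0$ for some $v\in Q_0$. Fixing the decomposition of (1), for each $v\in Q_0$ the summands that are nonzero at $v$ contribute a direct sum of nonzero subspaces of the finite-dimensional space $M(v)$, so there are at most $\dim_k M(v)$ of them; since $Q_0$ is finite, only finitely many indecomposable summands of $M$ are noise free.

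The main obstacle is the support lemma together with making the claim that $\End(L_j)$ is local airtight without circularity, i.e.\ pinning down exactly which properties of the induction functor and of pwf representations may be invoked; once these are in hand, (1)--(3) follow by bookkeeping with the two cited decomposition theorems and Azumaya's theorem.
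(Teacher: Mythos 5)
Your proposal is correct, and for parts (1) and (2) it follows essentially the same route as the paper: cite Theorem~\ref{thm:noise and noise free decomposition} and Theorem~\ref{thm:second decomposition theorem}, observe that interval (noise) summands have endomorphism ring $k$ and that the induced summands have local endomorphism rings because $\IndP$ is fully faithful (Proposition~\ref{PropPiStar}), and conclude uniqueness by Krull--Remak--Schmidt--Azumaya; this is exactly the content of Corollary~\ref{cor:big decomposition}. For part (3) you take a slightly different and arguably more elementary path: the paper deduces finiteness from the fact that $\ResM(M_{NF})$ is a pointwise finite representation of the finite category $\Chat(\M)$, hence globally finite-dimensional and a finite direct sum, whereas you count directly in $M$ itself, noting that every noise free indecomposable summand is nonzero at some vertex of $Q_0$ and that at most $\dim_k M(v)$ summands can be nonzero at a given $v$. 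Both arguments are valid; yours has the advantage of not depending on how the decomposition was produced. One small remark: your ``support lemma'' is doing more work than needed, since by Definition~\ref{def:noise-and-noise-free} a representation is noise precisely when it vanishes on $Q_0$, so the implication ``vanishes on $Q_0$ implies noise'' is the definition itself; the additional fact you prove (that such an indecomposable is supported in a single $\mathcal{P}_\alpha$) is true but not required for (3).
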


In Section~\ref{sec:proj-inj}, we completely classify the projective and injective pointwise finite-dimensional representations when $\Cbar$ is left and right $Q$-bounded. We say that $\Cbar$ is left (or right, respectively) $Q$-bounded when there are only finitely many paths of $Q$ ending (starting, respectively) at a given vertex of $Q$, that remain non-zero in $\Cbar$ (Definition~\ref{def:Q bounded}). Given an interval $J$ over some $\mathcal{P}_\alpha$, we define a pointwise finite-dimensional representation $P_J$ ($I_J$, respectively) which can be thought of as the direct limit (inverse limit, respectively) of the representable projective representations $\Hom_\Cbar(j,-)$ (corepresentable injective representations $\Hom_\Cbar(-,j)$, respectively) for $j \in J$.

\begin{thm} [Theorem ~\ref{ThmAllProj}]
    If $\Cbar$ is left $Q$-bounded, then all indecomposable pointwise finite-dimensional projectives are of the form $P_J$ for an interval $J$ in some $\mathcal{P}_{\alpha}$.
    
    Dually, if $\Cbar$ is right $Q$-bounded, then every indecomposable pointwise finite-dimensional injectives is of the form $I_J$ for an interval $J$ in some $\mathcal{P}_{\alpha}$.
\end{thm}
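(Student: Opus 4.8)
The plan is to reduce to the case of a finite subthreading, where indecomposable pwf projectives are much easier to pin down, and then transport the answer back through the induction functor. Throughout one works inside $\rpwf(\Cbar)$, which is Krull--Remak--Schmidt--Azumaya, so every indecomposable object has local endomorphism ring. Let $P$ be an indecomposable pwf projective. By Theorem~\ref{thm:noise and noise free decomposition} it is either noise or noise free. The noise case I would handle directly: such a $P$ is an interval module supported strictly inside a single arrow $\alpha\colon a\to b$ of $Q$, and mapping onto it from the corresponding $P_J$ (which is indecomposable, with endomorphism ring $k$) and splitting shows that $P$ can be projective only when $\I$ already kills every path of $\Cbar$ that leaves $\alpha$, in which case $P_J$ already equals $P$. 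So the real work is the noise-free case, where Theorem~\ref{thm:second decomposition theorem} gives $P\cong\IndP(N)$ for some indecomposable $N$ in the $\rpwf$ category of a finite subthreading.

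Step one is to show that $N$ is itself projective. The point is that $\IndP$ spreads a representation constantly along the parts of the partition: it is precomposition with the quotient functor from $\Cbar$ onto the subthreading category, so it is exact, and by Corollary~\ref{cor:equivalence from localized to sample on reps} it is fully faithful onto the subcategory of $\rpwf(\Cbar)$ attached to the subthreading. An exact, fully faithful functor reflects projectivity onto its essential image: for an epimorphism $q\colon A\twoheadrightarrow B$ of subthreading representations and a morphism $f\colon N\to B$, the map $\IndP(q)$ is epi, projectivity of $P=\IndP(N)$ lifts $\IndP(f)$ through it, and full faithfulness converts the lift into a lift of $f$ through $q$. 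Hence $N$ is projective over the subthreading.

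Step two is to classify the indecomposable pwf projectives of a finite subthreading. Its path category is a quotient of that of a quiver built from $Q$ by replacing each arrow with a finite linear $\mathbb A_n$ quiver, and it stays left $Q$-bounded, so only finitely many nonzero paths end at any object. From this, each representable $\Hom(x,-)$ is pwf, projective and indecomposable, and any coproduct $\bigoplus_i\Hom(x_i,-)$ indexed by the support of a pwf semisimple is again pwf. The step I expect to be the main obstacle is showing the radical is superfluous: if a nonzero pwf $L$ satisfied $L=\mathrm{rad}(L)$, then, starting from $x_0$ with $L(x_0)\neq 0$ and repeatedly expanding $L(x)=\sum_{\beta\colon z\to x}\mathrm{im}\,L(\beta)$ over arrows $\beta$, one manufactures arbitrarily long nonzero paths into $x_0$, contradicting the finiteness just noted. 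With a superfluous radical, an indecomposable pwf projective $N$ admits a projective cover $\bigoplus_i\Hom(x_i,-)\twoheadrightarrow N$ indexed by a basis of $\mathrm{top}(N)$, it splits, and Krull--Remak--Schmidt--Azumaya forces $N\cong\Hom(x,-)$ for a single object $x$.

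Step three is to compute $\IndP(\Hom(x,-))$. The object $x$ is a single part $p$ of the partition, an interval of objects of $\Cbar$ lying inside $\{a\}\cup\mathcal P_\alpha\cup\{b\}$ for some arrow $\alpha\colon a\to b$ (with the degenerate possibility $p=\{v\}$ at a vertex, giving $\Hom_\Cbar(v,-)=P_{\{v\}}$). Since $\Hom(x,-)$ is supported exactly on the parts $\geq p$ and $\IndP$ spreads it constantly, the resulting representation of $\Cbar$ is, along $\mathcal P_\alpha$, the interval module on $\{t\in\mathcal P_\alpha : t\geq\inf p\}$ with the same behaviour at $\inf p$ as $p$, and beyond $b$ it agrees with $\Hom_\Cbar(\text{bottom of }p,-)$, or, when $p$ has no minimum, with the direct limit of the $\Hom_\Cbar(j,-)$ for $j\to(\inf p)^+$; comparing with the definition of $P_J$ (which depends only on the lower cut of $J$), this is exactly $P_p$. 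So $P\cong P_J$ with $J=p$ an interval in $\mathcal P_\alpha$. The injective statement is the dual argument, using that $\Cbar$ is right $Q$-bounded, the corepresentable functors $\Hom_\Cbar(-,j)$, and the inverse limits $I_J$. Apart from the superfluous-radical step, the remaining care is in matching this final computation to the precise definitions of $\IndP$ and $P_J$.
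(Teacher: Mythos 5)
Your overall strategy is sound and genuinely different from the paper's. The paper never classifies projectives over the subthreading: from the valid partition $\mathfrak{P}_P$ it directly builds an epimorphism $\bigoplus_{\alpha}\bigoplus_{i}P_{I_{\alpha,i}}\to P$ from finitely many $P_J$'s per arrow, uses left $Q$-boundedness only to ensure the source is pointwise finite-dimensional, and then splits and applies Krull--Remak--Schmidt--Azumaya together with Proposition~\ref{IndecProj}. Your route (show $N=\ResM(P)$ is projective over $\Chat(\M)$ via exactness and full faithfulness of $\IndP$, classify projectives there, induce back up) is longer but would, if completed, yield slightly more information, namely an explicit identification of which $P_J$ occurs (the cell supporting the top of $P$). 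Your treatment of the noise case and of Step one is fine.

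Two steps need real repair. First, your Step two is a substantive lemma not available in the paper: you must extend the left $Q$-bounded condition (stated only for vertices of $Q_0$) to all objects of $\M$, and then run a superfluous-radical / projective-cover argument over a Hom-finite category with possibly infinitely many objects; this is true and doable (morphisms into a sample point in $\mathcal{P}_\alpha$ factor through $s(\alpha)$, and Hom-finiteness plus finitely many predecessors gives nilpotence of the radical at each object), but as written it is an assertion, not a proof. Second, and more seriously, your Step three identification of $\IndP(\Hom_{\Chat(\M)}(x,-))$ with $P_J$ is not a tautological computation: $P_J$ is defined via direct limits of $\Hom_{\Cbar}(j,-)$, whereas the induced representation is built from $\Hom_{\Chat}(x,s_y)$, and the completion $\Ihat$ can be strictly larger than $\I$ (already for the ideal of Example~\ref{ex:with ideal}(1) with a coarse partition, $\Ihat$ kills morphisms that $\I$ does not). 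The clean way to close this is to produce the canonical epimorphism $P_X\to P$ (Yoneda, compatible over $j\in X$), check it is pointwise onto, and split it using projectivity of $P$ and indecomposability of $P_X$ --- which is precisely the paper's mechanism applied to a single interval. So the proposal is salvageable, but both the subthreading classification and the final identification currently rest on claims that require the splitting argument or an additional lemma to justify.
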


Section~\ref{sec:exs} is devoted to illustrate many examples and applications of our results in the earlier sections.
We define new representation types for thread quivers that generalize the classical representation finite and tame types.
The first is based on the partitions mentioned earlier (Definition~\ref{def:virtually finite and tame}) and the second on dimension vectors (Definition~\ref{def:essentially finite and tame}). We call them virtually finite (or virtually tame) and essentially finite (or essentially tame), respectively. We prove how these representation types are related in Proposition~\ref{prop:essentially is virtually}.
We show that special biserial categories from thread quivers are of virtually tame type and that certain (natural) thread quivers of extended Dynkin types $\widetilde{\mathbb{A}}$ and $\widetilde{\mathbb{D}}$ are virtually tame and essentially tame, respectively.

Finally, in Section~\ref{sec:hereditary}, we show that the category of pointwise finite-dimensional representations is hereditary when $\Cbar$ is left or right $Q$-bounded.
We consider the category of quasi noise free representations, which contains all indecomposable pointwise finite-dimensional representations.
A quasi noise free representation has at most finitely many noise direct summands supported on each arrow.
We show that this category is always abelian and hereditary.
We also show that, when $\C$ is left $Q$-bounded, the category of finitely presented representations is abelian and hereditary.

\begin{thm}[Theorem~\ref{thm:quasi noise free is hereditary} and Propositions~\ref{prop:no infinite paths and interval finite implies hereditary}~and~\ref{prop:fp_hereditary}]
    Let $\C$ be the path category of a thread quiver $(Q,\mathcal{P})$.
    The following statements about the hereditary property hold.
    \begin{enumerate}
        \item The category $\rqnf\C$ is hereditary.
        \item If $\C$ is left $Q$-bounded, then both $\rpwf\C$ and $\rfp\C$ are hereditary.
        \item If $\C$ is right $Q$-bounded then $\rpwf\C$ is hereditary.
    \end{enumerate}
\end{thm}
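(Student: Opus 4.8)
The plan is to prove hereditariness by controlling projective dimensions of finitely generated projectives, using the combinatorial structure of paths in the thread quiver together with the decomposition and classification theorems established earlier. Recall that a category of representations is hereditary precisely when $\Ext^2$ vanishes, equivalently when every subobject of a projective is projective, equivalently (in the presence of enough projectives) when every finitely generated projective has a projective kernel for any map out of it. So the first move is to identify the relevant projective objects. For $\rqnf\C$ and for $\rpwf\C$ in the $Q$-bounded cases, Theorem~\ref{ThmAllProj} tells us that the indecomposable pointwise finite-dimensional projectives are exactly the representations $P_J$ attached to intervals $J$ in the thread sets $\mathcal{P}_\alpha$ (including the representable $\Hom_\Cbar(x,-)$ when $J$ is a point at a vertex of $Q$). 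So it suffices to understand the submodule structure of finite direct sums of such $P_J$'s.

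First I would treat the cleanest case, part (2) for $\rpwf\C$ with $\C$ left $Q$-bounded, since ``no infinite paths and interval finite'' is the hypothesis that makes the representable projectives $\Hom_\C(x,-)$ behave like projectives over a finite-dimensional hereditary algebra. The key step is: given a subrepresentation $N \hookrightarrow \Hom_\C(x,-)$, show $N$ is again a (possibly infinite, but pointwise finite-dimensional) direct sum of objects of the form $P_J$. The structural input is that in a path category of a quiver the representable is ``tree-like'': its restriction along any arrow or thread is injective or zero on the relevant Hom-spaces, so a subrepresentation is determined by, and freely generated by, a down-set of ``starting data''. Concretely I would argue that $N$ has a projective cover which is a sum of $P_J$'s, and that the kernel of the cover must vanish because a relation among generators of $N$ would force a relation already inside $\Hom_\C(x,-)$, contradicting freeness of the path category (this is exactly where we use that $\I$ is only a quotient by an admissible ideal and the $P_J$ are defined as limits of honest representables — one has to check the limit doesn't introduce relations, which is where the interval-finiteness and absence of infinite paths are used to guarantee the direct limits stabilize locally). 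Then $\rfp\C$ hereditary follows because finitely presented objects over a left $Q$-bounded $\C$ have finite projective presentations by finitely generated projectives, and the kernel computed above, being a summand-controlled subobject of a f.g. projective, is again finitely generated projective; so $\mathrm{pd} \le 1$ on a generating set of $\rfp\C$, hence everywhere.

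Next, part (1), that $\rqnf\C$ is hereditary, and part (3), the right $Q$-bounded case. For (3) I would dualize (1)/(2): right $Q$-bounded means the opposite category $\C^{\mathrm{op}}$ is left $Q^{\mathrm{op}}$-bounded, and $\rpwf\C \simeq (\rpwf\C^{\mathrm{op}})^{\mathrm{op}}$ sends hereditary to hereditary since heredity is self-dual ($\Ext^2$ on one side is $\Ext^2$ on the other). For (1), the subtlety is that $\rqnf\C$ need not have enough projectives in an obvious way, so instead I would compute $\Ext^2_{\rqnf\C}(M,-)=0$ directly for $M$ in this category. Using Theorem~\ref{thm:second decomposition theorem}, $M$ decomposes into noise summands and noise-free summands induced from finite subthreadings; $\Ext^2$ distributes over the (locally finite) direct sum, so it is enough to handle (a) a single noise representation supported inside one arrow, where the ambient category restricts to representations of a totally ordered set — and representations of totally ordered sets are hereditary since every such poset has global dimension $\le 1$ as the incidence category has no ``length-two relations'' forced — and (b) a noise-free indecomposable $\IndP(M')$ for $M'$ a representation of a finite subthreading $Q'$; here the induction functor from Section~\ref{sec:functors}, being (a restriction of) a Verdier-localization-adjoint, is exact and admits an exact right adjoint, so it preserves $\Ext$-vanishing, and $\rpwf(kQ')$ for the finite quiver $Q'$ (an $\mathbb{A}_n$-threading of something, hence still acyclic with an admissible ideal of a hereditary algebra — actually the relations are zero on $\mathbb{A}_n$ pieces so it is genuinely hereditary) is hereditary by the classical theory.

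The main obstacle I expect is step (1b)/(2)'s freeness claim: verifying that the ``limit'' projectives $P_J$ do not acquire extra relations and that an arbitrary pointwise finite-dimensional subobject of a (possibly infinite) sum of them is again such a sum. The danger is a subrepresentation that is ``generated at infinity'' — a decreasing chain of local pieces with no minimal generator — which is exactly what pointwise-finiteness plus the $Q$-boundedness / interval-finiteness hypotheses are there to rule out. I would isolate this as a lemma: \emph{under the stated finiteness hypotheses, every pointwise finite-dimensional subrepresentation of a direct sum of $P_J$'s has a projective cover by such a sum, with zero kernel.} Once that lemma is in hand, all three parts follow by the exactness/duality reductions sketched above; the rest is bookkeeping with the decomposition theorems already proved.
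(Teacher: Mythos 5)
There are two genuine gaps here, and both are resolved in the paper by a single mechanism that your proposal never invokes: choosing a valid partition adapted \emph{simultaneously} to all the representations appearing in a given extension or inclusion, restricting along $\ResM$ to the finite sample category $\C(\M)$ (a quiver, hence hereditary by \cite[p.~79]{GR97}), and transporting the conclusion back with the exact, fully faithful $\IndP$, using that $\IndP\ResM$ is the identity on every object the partition was adapted to. First, your part (2) rests entirely on the lemma that a pointwise finite-dimensional subobject $L$ of a projective $P$ is a direct sum of $P_J$'s, and your proposed proof of it is a heuristic (``freeness of the path category'', ``a projective cover with zero kernel''); the actual argument is that $L$ is quasi noise free by Proposition~\ref{thm:quasi noise free is serre}, so $\mathfrak{P}_{P\oplus L}$ is valid, $\ResM(L)\hookrightarrow\ResM(P)$ is a subobject of a projective over a quiver and hence projective, and $\IndP$ carries $P_x$ to $P_I$ and satisfies $\IndP\ResM(L)=L$. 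Nothing about ``relations in the path category'' is needed, and your worry about subobjects ``generated at infinity'' is precisely what the validity of $\mathfrak{P}_L$ (finitely many cells per arrow) disposes of.

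Second, your argument for part (1) does not go through as stated. Reducing the noise summands to ``representations of a totally ordered set'' is unjustified: $\Ext$ groups of a noise representation must be computed in $\rqnf\C$ (or $\rpwf\C$), where 2-extensions may pass through representations supported on the rest of the quiver, and in any case the paper itself shows that representation categories of non-discrete totally ordered sets need not be hereditary without the pointwise-finiteness restriction. Likewise, ``$\IndP$ is exact with an exact right adjoint, so it preserves $\Ext$-vanishing'' is not a valid inference for $\Ext^2$, and it fixes the partition using $M$ alone, whereas the other terms $Y$, $Z$, $E$ of the extensions need not be constant on the cells of $\mathfrak{P}_M$. The correct argument shows $\Ext^1(M,-)$ and $\Ext^1(-,M)$ are right exact by forming $T=M\oplus E\oplus Z\oplus Y$, taking $\mathfrak{P}_T$, lifting the extension over $\C(\M)$ where hereditariness is known, and inducing back. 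Your part (3) by duality and your treatment of $\rfp\C$ (kernel of $P_2\to\mathrm{Im}\,f$ splits off, hence is finitely generated projective) do match the paper.
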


\noindent We conjecture that $\rpwf\C$ is always hereditary (Conjecture~\ref{con:pwf hereditary}).

\subsection*{Future work} In a forthcoming work, we apply the representation theory of thread quivers to create new cluster categories and study them.

\subsection*{Preliminaries and conventions}

 We assume $k$ is an algebraically closed field.
For a category $\mathcal{A}$ we denote by $\mathcal{A}_0$ the set or class of objects of $\mathcal{A}$.
Given a $k$-linear category $\mathcal{A}$, we let $\rpwf(\mathcal{A})$ denote the category of  covariant $k$-linear functors from $\mathcal{A}$ to the category ${\rm mod} (k)$ of finite-dimensional $k$-vector spaces. Such a functor (also called representation, or module) will be said to be a \emph{pointwise finite-dimensional} over $\mathcal{A}$.  Note that $\rpwf(\mathcal{A})$ is $k$-linear and abelian. Unless otherwise stated, all representations are pointwise finite.

\subsection*{Acknowledgements}
CP was supported by the National Sciences and Engineering Research Council of Canada, and by the Canadian Defence Academy Research Programme.
JDR is supported by BOF grant 01P12621 from Ghent University.
EY is supported in part by the Royal Society Wolfson Award RSWF/R1/180004.

\section{Representations of thread quivers}~\label{sec:threads}

The notion of thread quiver has been introduced by Berg and Van Roosmalen in \cite{BvR14}. The idea is to consider a quiver and append to each arrow a linearly ordered set. 
Given our field $k$, we can then define the path category of this thread quiver, similar to the path category of a quiver. In \cite{BvR14}, the authors were mainly interested in the case where all linearly ordered sets are locally discrete, because that is exactly the condition needed for the category of finitely presented representations of the path category of the thread quiver to have almost split sequences.  In this paper, we do not put any restriction on the linearly ordered sets, and also consider (weakly) admissible quotients of path categories of thread quivers which has not been considered in~\cite{BvR14}.

\subsection{Thread quivers}

We let $Q = (Q_0, Q_1, s, t)$ denote any quiver, possibly infinite. The maps $s,t: Q_1 \to Q_0$ are the source and target maps, respectively, from the set of arrows $Q_1$ to the set of vertices $Q_0$. For each arrow $\alpha$ in $Q_1$, we associate a linearly ordered set (possibly empty), denoted $\mathcal{P}_\alpha$. Additionally, we let $\oP$ be the linearly ordered set obtained from $\mathcal{P}_\alpha$ by adding a minimal element $\min(\alpha)$ and adding a maximal element $\max(\alpha)$. In other words, the collection $\mathcal{P}:=\{\mathcal{P}_\alpha\}_{\alpha \in Q_1}$ gives us a function from $Q_1$ to the class of linearly ordered sets which associates to every $\alpha$ a linearly ordered set $\mathcal{P}_\alpha$. Note that $\oP$ always differs from $\mathcal{P}_\alpha$ by exactly two elements.


\begin{definition}
A \emph{thread quiver} $(Q,\mathcal{P})$ is the quiver $Q$ with the collection of linearly oriented posets $\mathcal{P}$.
\end{definition}

Let $Q$ be a quiver and $(Q,\mathcal{P})$ a thread quiver.
Notice that, for any arrow $\alpha$ in $Q$, the posets $\mathcal{P}_\alpha$ and $\oP$ are both totally ordered.
This does not change even if $\alpha$ is a loop.
We can think of the thread quiver $(Q,\mathcal{P})$ as being a collection of disjoint totally ordered sets $\{\oP\}_{\alpha\in Q_1}$. However, because of a later construction, it will be convenient to draw a thread quiver glued together using the quiver $Q$. For example, in our drawing, if $\alpha, \beta$ are arrows with $t(\alpha) = s(\beta)$, then we identify $\max(\alpha)$ with $\min(\beta)$ and label it by $s(\beta) = t(\alpha)$.

Different threadings may yield the same diagram.
For any $\alpha\in Q_1$ such that $\mathcal{P}_\alpha$ is non-empty, one may pick an arbitrary element $x$ of $\mathcal{P}_\alpha$.
We can create a new quiver $Q'$ whose vertices are $Q_0\cup\{x\}$ and whose arrows are $(Q_1\setminus\{\alpha\})\cup\{\alpha_1,\alpha_2\}$, where $\alpha_1:s(\alpha)\to x$ and $\alpha_2:x\to t(\alpha)$.
Set
\begin{align*}
    \mathcal{P}'_\beta &= \mathcal{P}_\beta \qquad \beta\neq \alpha \\
    \mathcal{P}_{\alpha_1} &= \{y\in \mathcal{P}_\alpha \mid y < x\} \\
    \mathcal{P}_{\alpha_2} &= \{y\in \mathcal{P}_\alpha \mid y > x\}.
\end{align*}
Let $\mathcal{P}_{\alpha_1}$ and $\mathcal{P}_{\alpha_2}$ inherit their order as subsets of $\mathcal{P}_\alpha$.
Now we have a new thread quiver $(Q',\mathcal{P}')$ that is essentially the same as $(Q,\mathcal{P})$.
In particular, if $\alpha$ is a loop and $\mathcal{P}_\alpha$ is nonempty, we may reinterpret $(Q,\mathcal{P})$ as having a two cycle instead of a loop.

In our drawings, we systematically put $\mathcal{P}_\alpha$ on top of the arrow $\alpha$ unless $\mathcal{P}_{\alpha}$ is empty, in which case we keep the original label of the arrow. 

\begin{example}~\label{ex:A2}  Let $Q=\mathbb{A}_2: \xymatrix{\bullet_a \ar[r]^{\alpha} & \bullet_b}$.

\begin{enumerate}
\item Consider a poset $\mathcal{P}_{\alpha}=(0,1).$ Then, $\oP = \{\min(\alpha),\max(\alpha)\}\cup (0,1)$ with $\min(\alpha) < t<\max(\alpha)$ for all $t \in (0,1)$ which yields

\[(Q,\mathcal{P})= \xymatrix{ {}_{a=\min(\alpha)}\bullet \ar@{=>}^{\mathcal{P}_{\alpha}}[r] & \bullet_{b=\max(\alpha)} }\]

\item Let us now consider a poset $\mathcal{P}_{\alpha}=[0,1].$ Then, $\oP = \{\min(\alpha),\max(\alpha)\}\cup [0,1]$ with $\min(\alpha)< 0$ and $1<\max(\alpha)$ which yields

\[(Q,\mathcal{P})= \xymatrix{  \bullet_{a} \ar[r] & \bullet_0 \ar@{=>}^{\mathcal{P}_{\alpha}}[r] & \bullet_1 \ar[r] & \bullet_{b} }\]

Note that when we consider $\mathcal{P}_{\alpha}=[0,1]$, the minimum and the maximum of $\oP$ are labeled by $a$ and $b$, respectively. Notice that for this example, we could start with an $A_4$ quiver and thread its middle arrow with $(0,1)$ and the others with the empty set to obtain an equivalent diagram.
\end{enumerate}
\end{example}

\begin{example}~\label{running-ex}
    Let us give another interesting example. Consider the quiver $Q$ of type $\mathbb{D}_4$ as follows.
\[ \xymatrix@R=1ex{               & & \bullet_c \\ 
                  \bullet_a\ar^{\alpha}[r] & \bullet_b\ar^{\beta}[ur] \ar_{\gamma}[dr] & \\
                                & & \bullet_d}\]

Now, we choose $\mathcal{P}_{\alpha}=(0,4), \mathcal{P}_{\beta} = \mathbb{R}$ and $\mathcal{P}_{\gamma}=\emptyset.$ We obtain the thread quiver

\[\xymatrix@R=2ex{               & & \bullet_c \\ 
                  \bullet_a\ar@{=>}^{(0,4)}[r] & \bullet_b\ar@{=>}^{\mathbb{R}}[ur] \ar_{\gamma}[dr] & \\
                                & & \bullet_d}\]

\end{example}

\subsection{The path category of a thread quiver.}\label{sec:path category}

In this section, we will construct a path category associated to any thread quiver $(Q, \mathcal{P})$. For this, we first need to introduce elements for any pair $(x,y)$ with $x \leq y$ in a given $\oP$.
These elements will generate the morphisms in the path category.

For each $x,y \in \oP$ with $x \le y$, we associate a symbol $\eta_{y,x}$. Note that if $\alpha$ is not a loop, we identify $\eta_{\max(\alpha),\min(\alpha)}$ with $\alpha$; when $\alpha: x \to x$ is a loop, the elements $\eta_{x,x}$ and $\alpha$ are not identified.
We simply denote $\eta_{\max(\alpha),x}$ by $\eta_{ t(\alpha), x}$ and $\eta_{x, \min(\alpha)}$ by $\eta_{x, s(\alpha)}$. 

Let $k$ be any field. We construct a $k$-linear category $\C$ of $(Q,\mathcal{P})$, which we call the \emph{path category} of $\C$, as follows.

Consider the surjective map
$$\varphi:  \bigsqcup_{\alpha\in Q_1} \oP\longrightarrow Q_0 \cup \left( \bigcup_{\alpha\in Q_1} \mathcal{P}_\alpha\right)$$
that is the identity on $ \bigsqcup_{\alpha\in Q_1} \mathcal{P}_\alpha$ and such that for an arrow $\alpha$, we have $\varphi(\min(\alpha)) = s(\alpha)$ and $\varphi(\max(\alpha)) = t(\alpha)$.
For $x,y$ in the domain of this map, we write $x \sim y$ if $\varphi(x)=\varphi(y)$. The object set of $\C$ is given by the equivalence classes
$$\bigsqcup_{\alpha\in Q_1} \oP\Bigg/\sim$$

Hence, the set $$Q_0 \cup \left( \bigcup_{\alpha\in Q_1} \mathcal{P}_\alpha\right)$$
is a complete set of representatives of these equivalence classes, and we use these representatives to denote our objects.

As for the Hom spaces between two objects $x,y$, we have that $\Hom_{\C}(x,y)$ is a $k$-vector space with basis given as follows. 
\begin{itemize}
    \item Case 1: $x\leq y\in \oP$ for some $\alpha\in Q_1$.
    The basis is given by
    \begin{displaymath}
     \{\eta_{y,x}\} \cup \{\eta_{y,s(\alpha)} \,p\, \eta_{t(\alpha),x} \mid p:t(\alpha)\to s(\alpha) \text{ is a path in } Q\} .
    \end{displaymath}
    \item Case 2: $x\in \oP$ and $y\in\Pbar_\beta$, where if $\alpha=\beta$ then $x>y$.
    The basis is given by
    \begin{displaymath}
     \{\eta_{y,s(\alpha)} \,p\, \eta_{t(\alpha),x} \mid p:t(\alpha)\to s(\beta) \text{ is a path in } Q\} .
    \end{displaymath}
\end{itemize}

\begin{figure}[h]
    \centering
    \includegraphics[width=7.2cm]{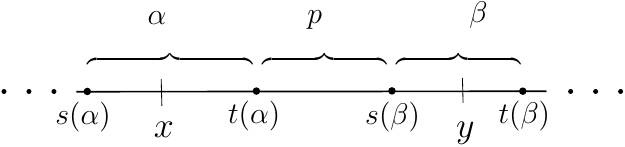}
    \caption{An illustration of $\alpha, \beta$ and $p.$}
    \label{composition}
\end{figure}

Composition of basis elements in $\C$ is induced from path concatenation in $Q$ together with the relations $\eta_{z,y}\eta_{y, x}=\eta_{z,x}$ whenever $x \le y \le z$ in a given $\oP$, for a non-loop $\alpha$.
When $\alpha$ is a loop, we use the same relations \emph{except} with the difference that $\eta_{t(\alpha),x}\eta_{x,s(\alpha)}=\alpha$.
We extend this composition rule bilinearly.

Given any $x \in \oP$, we identify $\eta_{x,x}$ with the identity morphism at $x$.
We denote it by $e_x$ and call it the \emph{trivial path} at $x$.
The elements of the form $\eta_{y,x}$ and $\eta_{y,s(\alpha)} \,p\, \eta_{t(\alpha),x}$ as above are called \emph{path-like} elements.
Hence, every $\Hom$ space has a basis given by path-like elements.
In this way, we have defined a $k$-linear skeletal category $\C$. We note that categories of this form include path categories of any quiver by taking all $\mathcal{P}_\alpha$ to be empty. 

\begin{example}
\begin{enumerate}
    \item Recall $\mathcal{P}_{\alpha}=(0,4), \mathcal{P}_{\beta} = \mathbb{R}$ in Example~\ref{running-ex}. Take an element $x\in \mathcal{P}_{\alpha}$ and an element $y\in \mathcal{P}_{\beta}$, then the $\Hom$ space between the objects $x, y$ in $\C$ is
    \[
        \Hom(x,y)= \Span_k\{\eta_{yb}\eta_{bx}\} \cong k.
    \]
    \item Consider the quiver $Q$ as on the left and the thread quiver $(Q,\mathcal{P})$ of $Q$ on the right as shown below.
     \[
        \xymatrix{\bullet_a\ar@/^2ex/^{\alpha}[r] & \bullet_b\ar@/^2ex/^{\beta}[l]}
        \qquad \qquad
        \xymatrix{\bullet_a\ar@/^2ex/@{=>}^{(0,4)}[r] & \bullet_b\ar@/^2ex/@{=>}^{\mathbb{R}}[l]}
    \]
    Similarly, take $x\in (0,4)$ and $y\in \mathbb{R}$.
    Now, the $\Hom$-space is infinite-dimensional and spanned as follows.
    \[
        \Hom(x,y)= \Span_k\{\eta_{yb}\eta_{bx},\cdots, \eta_{yb}(\alpha\beta)^i\eta_{bx},\cdots\} \cong \bigoplus_{i=1}^\infty k
    \]
\end{enumerate}
    
\end{example}

If $Q$ is a tree quiver then its path algebra $kQ$ is equivalent to an incidence algebra (possibly without unit).
In this case, the path category $\C$ of $(Q,\mathcal{P})$ is equivalent to an incidence category.

\begin{definition}~\label{interval-fin} A quiver $Q$ is \emph{interval finite} if $Q$ admits at most finitely many paths from $x$ to $y$, for any pair $(x,y)$ of vertices in $Q_0$.
\end{definition}

It is not hard to see that the category $\C$ is $\Hom$-finite precisely when $Q$ is interval finite.

\subsection{Pointwise finite-dimensional interval representations}

Recall that a \emph{representation} is a covariant $k$-linear functor $M: \C \to \Mod\ k$. A \emph{pointwise finite-dimensional} representation is a covariant $k$-linear functor $M: \C \to \mmod k$.  We respectively denote by $\Rep\ \C$ and $\rpwf \C$ the categories of all representations and all pointwise finite-dimensional representations. These categories are $k$-linear and abelian. In order to define our first family of representations, we need the following definition.

\begin{definition}\label{def:interval}
    Let $P$ be any linearly ordered set. A subset $I$ of $P$ is called \emph{interval} if it satisfies the condition that if $x,y \in I$ and $x<z<y$ in $P$ then $z\in I$. Moreover, all intervals considered in this paper are assumed to be non-empty.
\end{definition}
In the present paper, we will not be considering intervals starting and ending in distinct arrows of the quiver.

\begin{example}
    In Figure~\ref{int} on the left, if we consider all elements between points $x$ and $y$ from $(0,4)$, this yields an example of an interval as in Definition~\ref{def:interval}. But if we consider $x$ and $y$ as shown in the second picture in Figure~\ref{int}, this is not an example of the intervals we consider.
\end{example}

\begin{figure}[h]
    \centering
    \includegraphics[width=6.7cm]{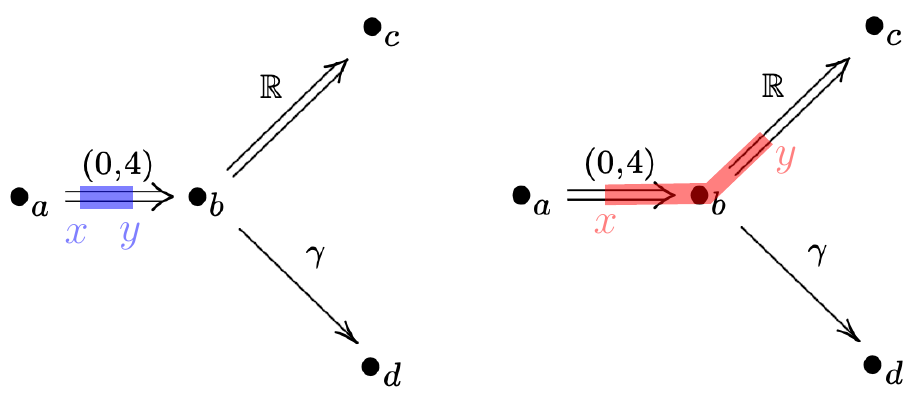}
    \caption{The blue region on the left corresponds to the interval $(x,y)\subset (0,4)=\mathcal{P}_\alpha$. For our consideration, the red region on the right does not correspond to an interval.}
    \label{int}
\end{figure}

Next, we define interval representations.

\begin{definition}
  Let $I$ be any interval in $\oP$ for some $\alpha \in Q_1$, excluding the case where $I=\oP$ and $\alpha$ is a loop. We define an \emph{interval representation} $M_I$ to be the pointwise finite-dimensional representation of $\C$ such that $M_I(x) = k$ if $x \in I$ and $M_I(x)=0$ otherwise. Moreover, if $x \le y$ in $I$, then the corresponding map $M(\eta_{y,x}): M(x) \to M(y)$ is the identity map.   For all other path-like elements $\rho$, $M(\rho)=0$.
\end{definition}

We notice that any such interval representation has a one-dimensional endomorphism algebra. In particular, these representations are all indecomposable. Now we recall the decomposition theorem for a linearly ordered set.  It can be found in \cite[Section 3.6, page 30]{GR97} without a proof, but Crawley-Boevey proved it in \cite{CB15}. In the case where the poset is that of real numbers $\mathbb{R}$, another proof can be found in \cite[Lemma 2.4.14]{IRT22}.

\begin{theorem}[\cite{GR97, CB15, IRT22}]\label{thm:c-b}
Let $Q$ be the $\mathbb{A}_2$ quiver with arrow $\alpha$ and let $\mathcal{P}_\alpha$ be a linearly ordered set, and consider the corresponding path category $\C$. Then any representation $M \in \rpwf\C$ decomposes uniquely as a direct sum of interval representations.
\end{theorem}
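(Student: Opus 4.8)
The plan is to reduce the statement to the classical structure theory of representations over a totally ordered set $\mathcal{P}_\alpha$, treating $\overline{\mathcal{P}}_\alpha$ itself as just such a set (with the two added endpoints $\min(\alpha),\max(\alpha)$ corresponding to $a$ and $b$). Since $Q=\mathbb{A}_2$ has no nontrivial paths, the path category $\C$ is exactly the poset category of $\overline{\mathcal{P}}_\alpha$: for $x\le y$ the only path-like element is $\eta_{y,x}$, the second case in the Hom-space definition is vacuous, and composition is simply $\eta_{z,y}\eta_{y,x}=\eta_{z,x}$. So a representation $M\in\rpwf\C$ is nothing but a functor from the totally ordered set $\overline{\mathcal{P}}_\alpha$ to $\mmod k$, i.e. a pointwise finite-dimensional persistence module over a totally ordered set, and the interval representations $M_I$ are exactly the interval modules. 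The content of the theorem is thus precisely the Crawley--Boevey decomposition theorem in the generality of an arbitrary totally ordered set.

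First I would make the above identification explicit: record that $\C$ is isomorphic to the $k$-linearization of the poset $(\overline{\mathcal{P}}_\alpha,\le)$, so that $\rpwf\C$ is the category of pointwise finite-dimensional representations of this totally ordered set, and $M_I$ matches the standard interval module. Then I would invoke the cited results. For existence of a decomposition into interval modules and its uniqueness, the cleanest route is to appeal directly to Crawley--Boevey's argument in \cite{CB15}, which is stated for linearly ordered index sets (and is the version referred to in \cite{GR97}); the Ringel--Gabriel treatment in \cite[Section 3.6]{GR97} and, when $\overline{\mathcal{P}}_\alpha\cong\mathbb{R}$, \cite[Lemma 2.4.14]{IRT22} give the same conclusion. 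Uniqueness of the multiset of interval summands then follows from the Krull--Remak--Schmidt--Azumaya theorem once one knows the summands have local (indeed one-dimensional) endomorphism rings, which was already observed right before the statement: each $M_I$ has $\End(M_I)=k$.

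If instead one wants a self-contained argument rather than a citation, the standard proof proceeds as follows. One shows $\rpwf\C$ satisfies the hypotheses of Azumaya's theorem (every object is a direct sum of indecomposables with local endomorphism rings) by proving that any pwf representation of a totally ordered set decomposes into interval modules; the key technical device is that for a totally ordered index set the ``generation'' and ``vanishing'' thresholds of a vector $0\neq v\in M(x)$ are well-behaved, and one builds interval summands by choosing, for each such $v$, the maximal interval on which a chosen lift of $v$ remains nonzero, using pointwise finite-dimensionality to make this choice coherent. The main obstacle in this self-contained route is exactly the point where classical (finite or $\mathbb{A}_n$) arguments break down: with an arbitrary totally ordered set there need be no immediate predecessors or successors, so one cannot induct on length or ``walk'' along the order; one must instead argue directly with suprema/infima of thresholds and carefully verify that the resulting family of interval modules really is a direct sum decomposition (both that the sum is direct and that it is exhausting). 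This is precisely the subtlety that \cite{CB15} handles, so in the paper I would simply cite it; the only thing to check on our side is the purely formal identification of $\C$ with the poset category of $\overline{\mathcal{P}}_\alpha$, which is immediate from the construction in Section~\ref{sec:path category}.
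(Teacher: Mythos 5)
Your proposal is correct and matches the paper's treatment: the paper gives no proof of this theorem, simply noting the identification with pointwise finite-dimensional representations of a totally ordered set and citing Crawley--Boevey for existence of the interval decomposition and Krull--Remak--Schmidt--Azumaya (via the one-dimensional endomorphism rings of the $M_I$) for uniqueness, exactly as you do. Your explicit check that the second case of the Hom-space basis is vacuous for $\mathbb{A}_2$ is a correct and slightly more careful version of the reduction the paper leaves implicit.
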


We also note that the uniqueness in the theorem comes from the celebrated Krull--Remak--Schmidt--Azumaya theorem, since any interval representation has local endomorphism algebra.

\subsection{First decomposition theorem}

In this subsection, $\C$ is the path category of a given thread quiver $(Q, \mathcal{P})$.
Recall that any pointwise finite-dimensional representation decomposes uniquely as a direct sum of indecomposable representations, all having local endomorphism algebras. It is found in \cite[Page 30]{GR97} but for a complete proof see  \cite{BCB20}. Recall that the \emph{support}  of a representation 
$M$, denoted $\supp(M)$, is the set of objects $x$ of $\C$ with $M(x) \ne 0$.

 The next Definition~\ref{def:confinement} and Lemma~\ref{lem:noise summand} are generalized from \cite[Definition 2.14, Lemma 2.15]{HR22}.
\begin{definition}\label{def:confinement}
      Let $R$ be a subset of objects of $\C$ included in a given $\oP$ for some $\alpha \in Q_1$.
    Let $M$ be a pointwise finite-dimensional representation of $\C$.
   
    The \emph{confinement of $M$ to $R$}, denoted $M|_R$, is the representation of $\C$ given by the following. For each object $x$ of $\C$ and each path-like element $\eta$ in $\C$:
    \begin{align*}
        M|_R(x) &:= \begin{cases}
            M(x) & x\in R \\
            0 & x\notin R
        \end{cases} \\
        M|_R(\eta) &:= \begin{cases}
            M(\eta) & \text{if for all path-like elements } \beta,\gamma \text{ such that }\eta=\gamma e_y\beta,\ \, y \in R \\
            0 & \text{otherwise}.
        \end{cases}
    \end{align*}
    
\end{definition}

\begin{lemma}\label{lem:noise summand}
    Let $M$ a pointwise finite-dimensional representation of $\C$.
    Let $R=\oP$, for some arrow $\alpha$ in $Q$.
    If $U$ is a direct summand of $M|_R$ and $\supp(U)\cap Q_0=\emptyset$ then $U$ is a direct summand of $M$.
\end{lemma}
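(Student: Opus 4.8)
The plan is to show that the summand $U$ of $M|_R$ (with $R = \oP$ and $\supp(U) \cap Q_0 = \emptyset$) splits off directly from $M$. The key structural observation is that $R = \oP$ meets the rest of $\C$ only through the two "boundary" objects $\min(\alpha) = s(\alpha)$ and $\max(\alpha) = t(\alpha)$, which are the images of vertices of $Q$; since $\supp(U)$ avoids $Q_0$ entirely, $U$ is supported strictly inside the open thread $\mathcal{P}_\alpha$, away from these boundary objects. First I would write $M|_R = U \oplus V$ for some complement $V$, and let $\iota: U \to M|_R$ and $\pi: M|_R \to U$ be the structure maps with $\pi\iota = \mathrm{id}_U$. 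The goal is to promote these to morphisms $\tilde\iota: U \to M$ and $\tilde\pi: M \to U$ in $\rpwf\C$ with $\tilde\pi\tilde\iota = \mathrm{id}_U$.

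The construction of $\tilde\iota$ and $\tilde\pi$ is the heart of the argument. Since $U(x) = 0$ for $x \notin \mathcal{P}_\alpha$, and in particular $U(\min(\alpha)) = U(\max(\alpha)) = 0$, I would define $\tilde\iota_x = \iota_x$ and $\tilde\pi_x = \pi_x$ for $x \in \mathcal{P}_\alpha$, and zero elsewhere. The point to verify is naturality: for a path-like element $\eta_{y,x}$ of $\C$, we need $M(\eta_{y,x}) \circ \tilde\iota_x = \tilde\iota_y \circ U(\eta_{y,x})$ and the dual. If both $x$ and $y$ lie in $\mathcal{P}_\alpha$ and $\eta = \eta_{y,x}$ is the "short" basis element (no detour through a path of $Q$), then naturality holds because $M|_R(\eta) = M(\eta)$ — indeed every intermediate object of such an $\eta$ lies between $x$ and $y$ in $\oP$, hence in $R$, so by Definition~\ref{def:confinement} the confinement does not kill $\eta$. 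If $\eta$ factors through $\min(\alpha)$ or $\max(\alpha)$ (i.e. $\eta = \eta_{y,s(\alpha)}\, p\, \eta_{t(\alpha),x}$ for a nontrivial path $p$ in $Q$), or if one of $x,y$ lies outside $\mathcal{P}_\alpha$, then the relevant component of $\tilde\iota$ or $\tilde\pi$ is zero on one side; on the other side, either $U(\eta) = 0$ because $\eta$ is not of interval type for $U$ (recall $U \subseteq M|_R$ and $M|_R(\eta) = 0$ whenever $\eta$ detours through the boundary, since $\min(\alpha), \max(\alpha) \notin$... wait, they \emph{are} in $R = \oP$ — so here one must instead use $U(\min(\alpha)) = U(\max(\alpha)) = 0$ to force the composite through those objects to vanish). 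So naturality reduces to these two easy cases. Then $\tilde\pi \tilde\iota$ is $\mathrm{id}$ pointwise on $\mathcal{P}_\alpha$ and $0 = \mathrm{id}$ elsewhere, so $\tilde\pi\tilde\iota = \mathrm{id}_U$, and $U$ is a direct summand of $M$.

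The main obstacle, and the step deserving the most care, is precisely the naturality check for $\tilde\iota$ (and $\tilde\pi$) along path-like elements that straddle the boundary of the thread — elements of the form $\eta_{y,s(\alpha)}\,p\,\eta_{t(\alpha),x}$ with $x, y \in \mathcal{P}_\alpha$ and $p$ a path in $Q$ from $t(\alpha)$ to $s(\alpha)$ (possible when $\alpha$ lies on an oriented cycle of $Q$, including the loop case). For such $\eta$, one square reads $M(\eta)\circ\iota_x = \tilde\iota_y\circ U(\eta)$; the right-hand side is $0$ because $U(\eta)$ factors through $U(t(\alpha)) = 0$, while the left-hand side factors through $M(\eta_{t(\alpha),x})\iota_x: M|_R(x)\to M(t(\alpha))$. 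One must argue this is zero: it equals $\iota_{t(\alpha)} \circ M|_R(\eta_{t(\alpha),x})$ only if $M|_R$ does not kill $\eta_{t(\alpha),x}$, but by Definition~\ref{def:confinement} it \emph{is} respected since all intermediate objects lie in $\oP = R$; then $M|_R(\eta_{t(\alpha),x}) = U\text{-or-}V$ component, and its restriction to $U(x)$ lands in $U(t(\alpha)) = 0$. Hence the composite vanishes, as needed. I would organize this casework cleanly by splitting on whether $\{x,y\}\subseteq\mathcal{P}_\alpha$ and whether $\eta$ is short or long, invoking Definition~\ref{def:confinement} to handle confinement-respecting elements and the vanishing $U(\min(\alpha)) = U(\max(\alpha)) = 0$ to handle the rest.
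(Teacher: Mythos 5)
Your proposal is correct, and it supplies in full the argument that the paper delegates to the citation of \cite[Lemma 2.1]{HR22}: extend the section $\iota$ and retraction $\pi$ of the summand $U$ of $M|_R$ by zero outside $\mathcal{P}_\alpha$, and verify naturality square by square. The two cases you isolate are exactly the ones that matter: for short elements $\eta_{y,x}$ with $x\le y$ in $\oP$ the confinement agrees with $M$ (all intermediate objects lie in $R$), so naturality is inherited from $\iota$ and $\pi$; for elements leaving the thread or detouring through a path $p:t(\alpha)\to s(\alpha)$ in $Q$, the composite factors through $M(\eta_{t(\alpha),x})\iota_x=\iota_{t(\alpha)}U(\eta_{t(\alpha),x})$ (respectively $\pi_y M(\eta_{y,s(\alpha)})=U(\eta_{y,s(\alpha)})\pi_{s(\alpha)}$), which vanishes because $U(t(\alpha))=U(s(\alpha))=0$. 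Your mid-proof hesitation about whether $\min(\alpha),\max(\alpha)$ lie in $R$ is resolved correctly: they do lie in $R$, so confinement does not kill $\eta_{t(\alpha),x}$, and one must instead use the vanishing of $U$ at those objects, which is precisely the hypothesis $\supp(U)\cap Q_0=\emptyset$. No gap.
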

\begin{proof}
    This proof follows from the same kind of arguments used in the proof of \cite[Lemma 2.1]{HR22}.
\end{proof}

Representations such as the summand $U$ from Lemma~\ref{lem:noise summand} will be called \emph{noise} representations, as in the next definition.

\begin{definition}~\label{def:noise-and-noise-free}
     Let $M$ be a pointwise finite-dimensional representation of $\C$. We call $M$  
     \begin{enumerate}[$(1)$]
         \item \emph{noise} if the support of $M$ is such that $\supp(M) \cap Q_0 = \emptyset$;
         \item \emph{noise free} if no direct summand of $M$ is noise.
     \end{enumerate}
\end{definition}

\begin{remark}
  Equivalently, by Lemma~\ref{lem:noise summand}, we can say that $M$ is noise free if, for each arrow $\alpha$ in $Q$, every indecomposable summand of $M|_{\oP}$ has support in $Q_0$.
\end{remark}

\begin{example}
    \begin{enumerate}
        \item Let $x$ be an object in $\C$ but not in $Q_0$. Then the simple representation at $x$ is noise. However, if $x$ is an object in $\C$ such that $x\in Q_0$, then the simple representation at $x$ is noise free. 
        \item Suppose an arrow $\alpha$ is threaded with a copy of $\mathbb{R}$. Then any representation of $\mathbb{R}$ can be thought of as a noise representation of $\C$ where the support is contained in $\mathcal{P}_\alpha=\mathbb{R}$.
        \item Let $M$ be noise free and $N$ be noise. Then $M\oplus N$ is neither noise nor noise free.
    \end{enumerate}
\end{example}

\begin{theorem}\label{thm:noise and noise free decomposition}
    Let $\C$ is the path category of a thread quiver $(Q, \mathcal{P})$, and $M$ be a pointwise finite-dimensional representation of $\C$.
    Then $M$ decomposes uniquely (up to isomorphism of summands) into $M_{NF} \oplus M_N$ where $M_{NF}$ is noise free and $M_N$ is noise.
    Moreover, $M_N = \bigoplus_{\alpha\in Q_1} U_{\alpha}$, where $\supp(U_{\alpha})\subseteq \mathcal{P}_\alpha$.
\end{theorem}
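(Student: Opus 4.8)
The plan is to build the decomposition arrow by arrow and then collect the results. First I would fix, for each arrow $\alpha \in Q_1$, the confinement $M|_{\oP}$ as in Definition~\ref{def:confinement}, with $R = \oP$. Since $\oP$ is a linearly ordered set and the full subcategory of $\C$ supported on $\oP$ is equivalent to the path category of an $\mathbb{A}_2$ quiver threaded by $\mathcal{P}_\alpha$ (modulo the loop caveat, where one passes to the two-cycle reinterpretation described after Example~\ref{ex:A2}), Theorem~\ref{thm:c-b} applies and gives a unique decomposition of $M|_{\oP}$ into interval representations. Among these interval summands, separate those whose support meets $Q_0$ from those whose support is contained in $\mathcal{P}_\alpha$; call the direct sum of the latter $U_\alpha$. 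By Lemma~\ref{lem:noise summand}, each such $U_\alpha$ (having $\supp(U_\alpha) \cap Q_0 = \emptyset$) is a direct summand of $M$ itself, so we may write $M \cong M' \oplus \bigoplus_{\alpha \in Q_1} U_\alpha$ for some representation $M'$, where the sum is over all arrows (it is a well-defined, pointwise-finite representation because at any fixed object $x$ only the terms for arrows $\alpha$ with $x \in \oP$ contribute, and $M(x)$ is finite-dimensional).

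Next I would argue that $M'$ is noise free. Set $M_N := \bigoplus_\alpha U_\alpha$, which is visibly noise, and $M_{NF} := M'$. If $M_{NF}$ had a noise summand $V$, then $V$ is supported inside a single $\oP$ for some arrow $\alpha$ with $\supp(V) \cap Q_0 = \emptyset$; restricting the splitting $M_{NF} = V \oplus (\text{rest})$ to $\oP$ and comparing with the uniqueness clause of Theorem~\ref{thm:c-b} applied to $M|_{\oP}$, one sees that the interval summands of $V$ would already have been absorbed into $U_\alpha$, a contradiction. Concretely: the restriction functor $(-)|_{\oP}$ sends $M$ to $M|_{\oP}$ and sends $M_N$ to (the summands $U_\alpha$ living on that arrow) $\oplus$ 0, and sends $M_{NF}$ to the remaining interval summands; since the decomposition of $M|_{\oP}$ into intervals is unique (KRSA), a further noise summand of $M_{NF}$ would produce a second occurrence of a $\mathcal{P}_\alpha$-supported interval, which is impossible. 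Hence $M_{NF}$ is noise free, giving existence of the decomposition $M = M_{NF} \oplus M_N$ with the stated form of $M_N$.

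For uniqueness, suppose $M = N_{NF} \oplus N_N$ is another such decomposition with $N_N = \bigoplus_\alpha V_\alpha$, $\supp(V_\alpha) \subseteq \mathcal{P}_\alpha$. Apply $(-)|_{\oP}$: both $M_{NF}|_{\oP}$ and $N_{NF}|_{\oP}$ decompose into interval representations (as $\mathbb{A}_2$-type representations), none of which is $\mathcal{P}_\alpha$-supported, since a $\mathcal{P}_\alpha$-supported interval summand of the restriction would, by Lemma~\ref{lem:noise summand}, lift to a noise summand of the noise-free representation. Therefore in the unique interval decomposition of $M|_{\oP}$, the $\mathcal{P}_\alpha$-supported intervals are exactly those of $U_\alpha$ and also exactly those of $V_\alpha$, so $U_\alpha \cong V_\alpha$ for every $\alpha$; thus $M_N \cong N_N$. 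Cancelling (using that $\rpwf\C$ is a Krull--Remak--Schmidt--Azumaya category, so summands cancel) yields $M_{NF} \cong N_{NF}$. This also shows the two summands $M_{NF}$, $M_N$ are determined up to isomorphism.

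The main obstacle I expect is the bookkeeping in the second and third paragraphs: making precise that an interval summand of the \emph{restriction} $M_{NF}|_{\oP}$ which is $\mathcal{P}_\alpha$-supported must come from an honest noise summand of $M_{NF}$. This is exactly the content of Lemma~\ref{lem:noise summand}, but one must be careful that the restriction functor does not create spurious summands and that the identification of the $\oP$-supported full subcategory with an $\mathbb{A}_2$-threaded path category is compatible with confinement. The loop case is a minor but genuine nuisance, handled once and for all by the two-cycle reinterpretation; after that, everything reduces cleanly to Theorem~\ref{thm:c-b} plus KRSA.
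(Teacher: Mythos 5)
Your proposal is correct and follows essentially the same route as the paper: confine $M$ to each $\oP$, use Theorem~\ref{thm:c-b} to split off the interval summands not meeting $Q_0$, apply Lemma~\ref{lem:noise summand} to see each $U_\alpha$ is a summand of $M$, and invoke Krull--Remak--Schmidt--Azumaya for uniqueness. You simply spell out in more detail the two points the paper leaves implicit (why the complement is noise free, and the cancellation step in the uniqueness argument), which is fine.
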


\begin{proof}
    For each arrow $\alpha$ in $Q$, using Remark~\ref{lem:noise summand}, denote by $U_\alpha$ the direct sum of noise summands of $M|_{\oP}$.
    By Lemma~\ref{lem:noise summand}, each $U_\alpha$ is a summand of $M$.
    Then $M=M_{NF}\oplus (\bigoplus_{\alpha\in Q_1} U_\alpha)$.
    By construction, $M_{NF}$ is noise free.
    Let $M_N = \bigoplus_{\alpha\in Q_1} U_\alpha$.
    By Theorem~\ref{thm:c-b}, this decomposition is unique up to isomorphism of summands. This concludes the proof of the theorem.
\end{proof}

\begin{definition}\label{def:quasi noise free}
    A representation $M \in \rpwf \C$ is \emph{quasi noise free} if for every arrow $\alpha \in Q_1$, the representation $U_\alpha$ in Theorem~\ref{thm:noise and noise free decomposition} has finitely-many direct summands.
\end{definition}

The motivation behind this definition is that if $M$ is quasi noise free, then the noise representation supported on any arrow is a finite direct sum of interval representations. 

We let $\rqnf\C$ denote the full subcategory of $\rpwf\C$ consisting of the quasi noise free representations.

\section{Ideals, quotients, partitions and completions}\label{sec:quotients}

In this section, we consider some ideals of $\C$ and their corresponding quotients. Motivated by the classical notion of admissible ideals for path categories of quivers, we define (weak) admissibility for ideals in $\C$. These notions will be important in Section \ref{sec:proj-inj}.

Given a quotient category $\Cbar$ of $\C$ by an arbitrary ideal $\I$, we consider some partition $\mathfrak{P}$ of the objects of $\Cbar$ (equivalently, the objects of $\C$) into intervals, which we call valid partitions. Given a valid partition, we can select one representative from each cell and then get a set $\mathcal{M}$ of objects of $\Cbar$ that we call sample points. The full subcategory of $\Cbar$ containing the objects $\mathcal{M}$ is denoted $\Cbar(\mathcal{M})$. We will see that the inclusion functor $\iota: \Cbar(\mathcal{M}) \to \Cbar$ admits a left inverse when $\I$ satisfies an additional property that we refer to as being $\mathfrak{P}$-complete. That same property also allows us, in the next section, to localize $\Cbar$ at a set of morphisms corresponding to the partition $\mathfrak{P}$.

\subsection{Weakly admissible ideals}\label{sec:ideals}

We want to make our categories more general, and consider some well-behaved quotient categories of a path category of a thread quiver. For path categories of a quiver, an admissible ideal usually requires generators to be linear combinations of paths of length at least two, and some additional conditions so that every oriented cycle is nilpotent. With the first condition, the quiver represents the objects and irreducible morphisms between them. When working with non-discrete posets, it is convenient to have several variations the first condition.

\begin{remark} Let $x,y \in \C$. A morphism $f: x \to y$ is irreducible if for any factorization $f = f_2f_1$, $f_1$ or $f_2$ is an isomorphism. In our category, irreducible morphisms are the non-zero scalar multiples of all $\eta_{y, x}$ where there is no $z$ with $x < z < y$.
\end{remark}

\begin{definition}
    Given a thread quiver $(Q, \mathcal{P})$ and its corresponding path category $\mathcal{C}$, we denote by $\mathcal{J}$ the ideal generated by all path-like elements that are different from the $e_x = \eta_{x,x}$. We call this the \emph{path-like radical} of $\mathcal{C}$.
\end{definition}

\begin{definition} \label{weakly admissible}
    Let $\I$ be an ideal of $\C$ and $\Cbar=\C/\I$.  We say $\I$ \emph{weakly admissible} when the following are satisfied: 
\begin{enumerate}[(i)]
    \item all algebras $\End_{\Cbar}(x)$, for $x \in \C$, are local;
    \item $\Hom_{\Cbar}(x,y)$ is finite-dimensional for all $x,y \in \C$; and
    \item $\I$ is generated by elements in $\mathcal{J}$.
\end{enumerate}  
 If, additionally, $\I$ is generated by elements in $\mathcal{J}^2$, then $\I$ is called \emph{admissible}.
\end{definition}


The weakly admissible condition implies that the quotient category is Hom-finite with splitting idempotents. The ``weakly'' in the above definition comes from the fact that we may, {\em a priori}, include an irreducible morphism of $\C$ in $\I$, which would not happen for admissible ideals. Note that in \cite[Page 81]{GR97}, the notion of admissibility for the path category $kQ$ of a quiver is weaker than our notion of admissibility in that case.
Note also that all admissible ideals in the sense of \cite{JR22} satisfy (i), (ii), and a stronger condition than being generated by elements of $\mathcal{J}^2$. They are thus admissible ideals as defined in the present paper.

The quotient category of $\C$ by $\I$ will be denoted by $\Cbar$. The category of all representations of $\Cbar$ (that is, $k$-linear covariant functors $\Cbar \to \Mod\ k$) is denoted by $\Rep\ \Cbar$. The subcategory of all pointwise finite-dimensional representations is denoted $\rpwf\Cbar$. We note that $\rpwf\Cbar$ is the full subcategory of $\rpwf\C$ such that for every object $M$ in $\rpwf\Cbar$ and every $f\in \I$, we have $M(f)=0$.

\begin{remark}~\label{rem:Cbar}
    Our first decomposition theorem, Theorem~\ref{thm:noise and noise free decomposition}, applies to representations in $\rpwf\Cbar$ because $\rpwf\Cbar$ is a full subcategory of $\rpwf\C$ that is closed under direct sums and direct summands.
\end{remark}

\begin{example}~\label{ex:with ideal}
\begin{enumerate}
    \item  Consider one of our previous Example~\ref{ex:A2}(1). Recall that $(Q,\mathcal{P})$ is:

\[(Q,\mathcal{P})= \xymatrix{ \bullet_0 \ar@{=>}^{\mathcal{P}_{\alpha}}[r] & \bullet_1 }\]
where $\mathcal{P}_\alpha$ was one of the continuous intervals $(0,1)$ or $[0,1]$. Let us now take the following set of relations for $(Q,\mathcal{P})$.

\[\mathcal{R}=\{\eta_{y,x}:\ y-x\geq 1/2\}\] and set $\mathcal{I} = \langle \mathcal{R} \rangle.$ Observe that $\I$ is admissible.

\item Another example we can consider continues from Example~\ref{running-ex}.

\[\xymatrix@R=2ex{ & & \bullet_c \\ 
                  \bullet_a\ar@{=>}^{(0,4)}[r] & \bullet_b\ar@{=>}^{\mathbb{R}}[ur] \ar_{\gamma}[dr] & \\
                                & & \bullet_d}\] 

Now, consider the ideal $\mathcal{I}$ generated by \[\mathcal{R}=\{\eta_{y,b}\eta_{b,x}: x\in \mathcal{P}_{\alpha}=(0,4)\ \text{ and }  y\in \mathcal{P}_{\beta}=\mathbb{R}\}\] which is a weakly admissible ideal.
\end{enumerate}

\end{example}

\subsection{Partitions and completion of ideals}

Let $\C$ be the path category from a thread quiver $(Q,\mathcal{P})$.
Let $\I$ be any ideal in $\C$ and $\Cbar=\C/\I$.

\begin{definition}
    A partition of the object set of $\mathcal{C}$ is said to be \emph{valid} if, (i) for each $x\in Q_0$, the set $\{x\}$  is a cell and (ii) each poset $\mathcal{P}_\alpha$ for an arrow $\alpha$ is a union of finitely many interval cells.
\end{definition}

We can associate a valid partition to some representations. Let $M$ be a representation in $\rpwf \C$.
For each $\alpha\in Q_1$, let $\mathfrak{P}_{M,\alpha}$ be the unique partition of $\mathcal{P}_\alpha$ such that $x\leq y$ are in the same cell of $\mathfrak{P}_{M,\alpha}$ if and only if for all $x\leq z \leq y$ we have $M(\eta_{zx})$ and $M(\eta_{yz})$ are isomorphisms. 

Define \[\mathfrak{P}_M := \{\{x\}|x\in Q_0\}\cup \left( \bigcup_{\alpha\in Q_1} \mathfrak{P}_{M,\alpha} \right). \]
Notice that for representations $M$ and $N$, the partition $\mathfrak{P}_{M\oplus N}$ is given by the intersection of cells of the partitions $\mathfrak{P}_M$ and $\mathfrak{P}_N$.\label{partition of the direct sum}
In particular, $\mathfrak{P}_{M\oplus N}$ is a refinement of both $\mathfrak{P}_M$ and $\mathfrak{P}_N$.

\begin{proposition}\label{prop:noise free inducess valid partition}
    Let $M$ be a representation of $\rpwf \Cbar$.
    Then $\mathfrak{P}_M$ is a valid partition if and only if $M$ is quasi noise free.
\end{proposition}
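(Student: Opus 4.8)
The plan is to prove both directions by analyzing the structure of $\mathfrak{P}_M$ cell-by-cell on each $\mathcal{P}_\alpha$, and relating the finiteness of the partition to the finiteness of the noise part $U_\alpha$ from Theorem~\ref{thm:noise and noise free decomposition}.

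\textbf{Setup and first direction.} First I would recall that $\mathfrak{P}_M$ always refines $\{\{x\}\mid x\in Q_0\}$ together with a partition of each $\mathcal{P}_\alpha$, so validity of $\mathfrak{P}_M$ is equivalent to the single requirement that, for each arrow $\alpha$, the partition $\mathfrak{P}_{M,\alpha}$ of $\mathcal{P}_\alpha$ consists of \emph{finitely many} interval cells. (That the cells are intervals is automatic from the definition: if $x\le z\le y$ with $x,y$ in the same cell, then $M(\eta_{zx})$ and $M(\eta_{yz})$ are isomorphisms, hence by the analogous condition applied between $x$ and $z$ one checks $z$ is in the same cell.) So the content of the proposition is: $\mathfrak{P}_{M,\alpha}$ is finite for every $\alpha$ $\iff$ each $U_\alpha$ has finitely many summands. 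Fix an arrow $\alpha$. By Theorem~\ref{thm:noise and noise free decomposition} (and Remark~\ref{rem:Cbar}), write $M = M_{NF}\oplus\bigoplus_\beta U_\beta$, and restrict to $\overline{\mathcal{P}}_\alpha$: since confinement kills everything outside $\overline{\mathcal{P}}_\alpha$, we get $M|_{\overline{\mathcal{P}}_\alpha} = M_{NF}|_{\overline{\mathcal{P}}_\alpha}\oplus U_\alpha$ (the other $U_\beta$'s are supported off $\mathcal{P}_\alpha$). Now $\mathfrak{P}_{M,\alpha}$ depends only on $M|_{\overline{\mathcal{P}}_\alpha}$ as a representation of the $\mathbb{A}_2$-path-category on $\overline{\mathcal{P}}_\alpha$, and by Theorem~\ref{thm:c-b} this is a direct sum of interval representations $M_I$. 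A point $x\in\mathcal{P}_\alpha$ is a ``breakpoint'' of $\mathfrak{P}_{M,\alpha}$ — i.e.\ there is a strict neighbor relation across which one of the structure maps is not an isomorphism — exactly when $x$ is an endpoint of one of the interval summands $I$ appearing. Hence $\mathfrak{P}_{M,\alpha}$ is finite if and only if only finitely many \emph{distinct} endpoints occur among the interval summands of $M|_{\overline{\mathcal{P}}_\alpha}$.

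\textbf{Finishing each direction.} For the ``if'' direction, suppose $M$ is quasi noise free, so each $U_\alpha$ is a finite direct sum of interval representations, contributing finitely many endpoints. The summands of $M_{NF}|_{\overline{\mathcal{P}}_\alpha}$ are restrictions of noise-free indecomposables; a noise-free indecomposable summand $N$ of $M$ has $\supp(N)\cap Q_0\ne\emptyset$, so $N|_{\overline{\mathcal{P}}_\alpha}$ — which is supported inside $\overline{\mathcal{P}}_\alpha$ — must be an interval representation $M_I$ whose interval $I$ contains $\min(\alpha)$ or $\max(\alpha)$ (else $N|_{\overline{\mathcal{P}}_\alpha}$ would be noise, but by Lemma~\ref{lem:noise summand} a noise summand of $M|_{\overline{\mathcal{P}}_\alpha}$ is a summand of $M$, contradicting noise-freeness). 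Therefore each noise-free summand contributes at most \emph{one} new breakpoint in $\mathcal{P}_\alpha$ (the non-extreme endpoint of its interval), but a priori there could be infinitely many such summands. This is where I need the extra input: I would argue that among the intervals $I$ with an endpoint in $\{\min(\alpha),\max(\alpha)\}$, pointwise finite-dimensionality of $M$ forces only finitely many distinct such intervals — at each point $y\in\overline{\mathcal{P}}_\alpha$ only finitely many contain $y$, and a nested (or co-nested) family of intervals all containing $\min(\alpha)$, say, is totally ordered by inclusion, so distinct ones give distinct finite-dimensional values... actually the cleanest route: the intervals containing $\min(\alpha)$ are linearly ordered by inclusion, and $M|_{\overline{\mathcal{P}}_\alpha}(\min(\alpha)) = M(s(\alpha))$ is finite-dimensional and receives a summand for each such interval, so there are at most $\dim M(s(\alpha))$ of them; dually at most $\dim M(t(\alpha))$ containing $\max(\alpha)$. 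Hence finitely many breakpoints total, so $\mathfrak{P}_{M,\alpha}$ is finite, and $\mathfrak{P}_M$ is valid. For the ``only if'' direction (the contrapositive), suppose some $U_\alpha$ has infinitely many (interval) summands; then these contribute infinitely many breakpoints to $\mathfrak{P}_{M,\alpha}$ — one must check they are genuinely infinitely many \emph{distinct} breakpoints, which follows because distinct interval summands either have a distinct endpoint or are comparable, and an infinite family of pairwise comparable intervals still has infinitely many distinct endpoints unless it is eventually constant, which would make infinitely many summands equal, still yielding a breakpoint that witnesses non-isomorphy on one side. So $\mathfrak{P}_{M,\alpha}$ is infinite and $\mathfrak{P}_M$ is not valid.

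\textbf{Main obstacle.} The delicate point is the ``if'' direction's control over noise-free summands: showing that infinitely many noise-free indecomposable summands cannot each contribute a fresh breakpoint in $\mathcal{P}_\alpha$. The key realization that resolves it is that any such summand restricts on $\overline{\mathcal{P}}_\alpha$ to an interval representation \emph{containing an endpoint of the arrow}, combined with Lemma~\ref{lem:noise summand} to rule out the ``interior interval'' case; then the dimension of $M$ at $s(\alpha)$ and $t(\alpha)$ caps the number of such intervals. I would make sure to state precisely the dictionary ``endpoints of interval summands of $M|_{\overline{\mathcal{P}}_\alpha}$ $\leftrightarrow$ boundaries between cells of $\mathfrak{P}_{M,\alpha}$'' as a small lemma or explicit remark inside the proof, since both directions lean on it.
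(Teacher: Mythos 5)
Your overall strategy coincides with the paper's: reduce validity to finiteness of $\mathfrak{P}_{M,\alpha}$ for each arrow, bound the noise interval summands of $M|_{\overline{\mathcal{P}}_\alpha}$ by the quasi-noise-free hypothesis and the remaining interval summands by $\dim M(s(\alpha)) + \dim M(t(\alpha))$, and for the converse derive a contradiction from infinitely many noise summands. In the forward direction one intermediate assertion is false: a noise-free indecomposable summand $N$ of $M$ need not restrict on $\overline{\mathcal{P}}_\alpha$ to a \emph{single} interval representation (think of any summand that is multidimensional over $\alpha$, e.g.\ a band module over a threaded Kronecker quiver); it restricts to a direct sum of interval representations, each of which must contain $\min(\alpha)$ or $\max(\alpha)$ by Lemma~\ref{lem:noise summand}. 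Your dimension count at $s(\alpha)$ and $t(\alpha)$ survives this correction, since it only needs that every such interval meets one of the two endpoints, so the slip is not load-bearing, and this half of your argument matches the paper's.

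The converse is where the written argument would actually fail. You reduce to showing that infinitely many noise interval summands force infinitely many cells, and you dispose of the case of ``infinitely many equal summands'' by saying it ``still yields a breakpoint'' --- but a single breakpoint does not make $\mathfrak{P}_{M,\alpha}$ infinite, so that branch proves nothing. The correct observation is that this case cannot occur: infinitely many isomorphic summands $M_I$ would make $M$ infinite-dimensional at every point of $I$, contradicting pointwise finite-dimensionality. More generally, your ``endpoint'' bookkeeping is delicate for arbitrary totally ordered sets (an interval need not have a supremum or infimum in $\mathcal{P}_\alpha$, and distinct intervals can share all their endpoint data). The paper's argument sidesteps this: choose a support point $x_i$ in each of the infinitely many summands; if the partition were finite, infinitely many $x_i$ would lie in a single cell $X$, each corresponding summand would then be supported on all of $X$ (the structure maps of $M$ are isomorphisms across $X$, hence so are those of each direct summand), and $M$ would be infinite-dimensional on $X$. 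I would replace your endpoint count either by that argument or by the clean combinatorial fact that each noise interval summand must be a union of cells, of which a finite partition admits only finitely many.
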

\begin{proof}
    Assume that $M$ is quasi noise free. By construction, we see that each $x\in Q_0$ is contained in cell $\{x\}\in\mathfrak{P}_M$.
    We need to show that $\mathfrak{P}_{M,\alpha}$ is finite, for each $\alpha\in Q_1$. Recall we define $M|_\alpha$ to be the confinement $M|_{\oP}$ (see Definition~\ref{def:confinement}).

    Notice, for each $\alpha\in Q_1$, $M|_\alpha$ decomposes uniquely up to isomorphism into a direct sum of strings (and possibly bands if $\alpha$ is a loop) \cite{GR97,CB15,HR22}.
    Since $M$ is quasi noise free, there are finitely-many string (or band) summands with support at $s(\alpha)$ or $t(\alpha)$.
    Since $M$ is finite-dimensional at both $s(\alpha)$ and $t(\alpha)$ and quasi noise free, we see that $M|_\alpha$ is a finite direct sum.
    Thus, the partition $\mathfrak{P}_{M,\alpha}$ must contain finitely many cells.
    Therefore, $\mathfrak{P}_M$ is valid.

    Conversely, assume that $\mathfrak{P}_M$ is valid. Let $\alpha \in Q_1$. It suffices to prove that $M|_{\alpha}$ is a finite direct sum of indecomposable representations. Assume otherwise, that is, we have countably-many distinct summands $Z_1, Z_2, \ldots$ all supported on $\mathcal{P}_\alpha$. For each $i$, let $x_i \in \mathcal{P}_\alpha$ be in the support of $Z_i$. Since $\mathfrak{P}_M$ is valid, infinitely many of the $x_i$ lie in the same cell $X$ of $\mathfrak{P}_M$. This yields that those infinitely many summands all have $X$ in their support, which would contradict the fact that $M|_{\alpha}$ is pointwise finite-dimensional.
\end{proof}

\begin{definition}
    From a valid partition $\mathfrak{P}$, we select a set $\mathcal{M}$ of objects from $\Cbar$ in such a way that each cell of $\mathfrak{P}$ contains exactly one point from $\mathcal{M}$. Note that $Q_0 \subseteq \mathcal{M}$.
Such a set $\mathcal{M}$ is called a \emph{sample}\label{text:sample definition maybe} of $\mathfrak{P}$.
\end{definition}

\medskip

\begin{remark}\label{rmk:axiom of choice}
    Note that if $\mathfrak{P}$ is finite (so when the quiver $Q$ is finite) or if we can compute each point of the sample (e.g. computing a half-way point), then we do not need the axiom of choice to choose $\mathcal{M}$. 
    However, in general, we need the axiom of choice to choose $\mathcal{M}$.
\end{remark}

We let $\Cbar(\mathcal{M})$ denote the full subcategory of $\Cbar$ generated by the objects in $\mathcal{M}$. Note that when $\I=0$, this category is the path category of a quiver which is obtained from $Q$ by replacing each arrow by a finite linear $\mathbb{A}_n$ quiver. When $Q$ is finite and $\I$ is weakly admissible, $\Cbar(\M)$ gives rise to a finite-dimensional algebra whose Gabriel quiver is obtained from $Q$ by replacing each arrow by a finite linear $\mathbb{A}_n$ quiver.

We let $\iota: \Cbar(\mathcal{M}) \to \Cbar$ be the inclusion functor. Let $\Sigma_\mathfrak{P}$ be the set of morphisms that are non-zero scalar multiples of the $\eta_{yx}$ in $\Cbar$ where $x, y$ are in a given cell of $\mathfrak{P}$. We would like to have a left inverse $\mathbf{r}: \Cbar \to \Cbar(\mathcal{M})$ to $\iota$ that inverts the morphisms in $\Sigma_\mathfrak{P}$. We will see that the existence of such an inverse is tied to $\mathfrak{P}$-complete ideals, as we will define below.

\begin{definition}\label{def:p equivalent}
    Let $f_1:x_1\to y_1$ and $f_2:x_2\to y_2$ be morphisms in $\C$ with $x_1, x_2$ in a given cell $X$ of $\mathfrak{P}$ and $y_1, y_2$ in a given cell $Y$ of $\mathfrak{P}$. We say that $f_1$ and $f_2$ are $\mathfrak{P}$-\emph{equivalent} if  there exist $z \in X$ and $w \in Y$ such that $\eta_{wy_1}f_1\eta_{x_1 z} = \eta_{wy_2}f_2\eta_{x_2 z}.$ 
\end{definition}

Clearly, $\mathfrak{P}$-equivalence is an equivalence relation.

\begin{definition}
\begin{enumerate}
    \item[(i)] Let $\Ihat$ be the ideal in $\C$ generated by all $f$ such that there is a $g\in \I$ such that $f$ and $g$ are $\mathfrak{P}$-equivalent. We call $\Ihat$ the $\mathfrak{P}$\emph{-completion} of $\I$ and an ideal $\I$ with $\I = \Ihat$ is called $\mathfrak{P}$\emph{-complete}.
    \item[(ii)] Define $\Chat:=\C/\Ihat$. 
\end{enumerate}
\end{definition}

If $f_2 = \lambda f_1$ for some $1\neq\lambda\in k$ and nonzero $f_1$, then $f_1$ and $f_2$ are not $\mathfrak{P}$-equivalent.
Even though $f_1$ may not be $\mathfrak{P}$-equivalent to $f_2$ in this case, if one is in $\Ihat$ then so is the other.

\begin{remark}
\begin{enumerate}
    \item We observe that if $\eta_{yx} \in \I$ for $x,y$ in a given cell, then all morphisms $\eta_{st}$ for $s,t$ in that cell are in $\Ihat$, so that the cell completely vanishes in the quotient by $\Ihat$. In particular, the objects of that cell are isomorphic to the zero object in $\Chat$.
    \item The ideal $\Ihat$ is in general not weakly admissible, even if $\I$ is.
    \item Note that if $\I=0$ then $\I$ is $\mathfrak{P}$-complete for any valid partition $\mathfrak{P}$.
    Thus, the results presented for the category $\Cbar = \C / \I$ with $\I$ some $\mathfrak{P}$-complete ideal for a given valid partition $\mathfrak{P}$ also apply to the category $\C \cong \C / \langle 0 \rangle$.
        
\end{enumerate}
\end{remark}

\begin{lemma} \label{comparing I and I hat}
    Let $f: x \to y$ be a morphism in $\C$. Then $f$ belongs to $\Ihat$ if and only if there are $\sigma_1, \sigma_2 \in \Sigma_\mathfrak{P}$ such that $\sigma_2 f \sigma_1 \in \I$.  In particular, $f \in \Ihat$ if and only if $f$ is $\mathfrak{P}$-equivalent to a morphism in $\I$.
\end{lemma}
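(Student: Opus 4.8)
The plan is to prove the three equivalent-looking statements in the order: (a) if $f \in \Ihat$ then there are $\sigma_1,\sigma_2 \in \Sigma_{\mathfrak P}$ with $\sigma_2 f \sigma_1 \in \I$; (b) conversely, if $\sigma_2 f \sigma_1 \in \I$ for some $\sigma_1,\sigma_2\in\Sigma_{\mathfrak P}$, then $f\in\Ihat$; and (c) derive the final ``in particular'' clause from (a) and (b) together with the observation that composing $f$ with elements of $\Sigma_{\mathfrak P}$ on either side produces a $\mathfrak P$-equivalent morphism (this is essentially the definition of $\mathfrak P$-equivalence, once one checks the endpoints land in the correct cells). I would first record a small lemma-free remark: every $\sigma\in\Sigma_{\mathfrak P}$ is, up to a nonzero scalar, some $\eta_{yx}$ with $x,y$ in a common cell, hence $\sigma$ has a ``one-sided partial inverse'' inside $\Sigma_{\mathfrak P}$ in the sense that for a sample point or any third point $z$ in that cell one has $\eta_{zy}\sigma\eta_{xz}$ a scalar multiple of $\eta_{zz}=e_z$; this is the mechanism that lets me move between $f$ and $\sigma_2 f\sigma_1$ freely.

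For direction (b): suppose $\sigma_2 f\sigma_1\in\I$, say $\sigma_1:x'\to x$, $\sigma_2:y\to y'$, with $x',x$ in a cell $X$ and $y,y'$ in a cell $Y$. By the remark above, pick $z\in X$, $w\in Y$ and bridging morphisms in $\Sigma_{\mathfrak P}$ so that $\eta_{wy}\,(\sigma_2 f\sigma_1)\,\eta_{x'z}$ equals a nonzero scalar times $\eta_{w y_0}\cdots$; more simply, $f$ and $\sigma_2 f \sigma_1$ are $\mathfrak P$-equivalent because both fit the pattern of Definition~\ref{def:p equivalent} (conjugating $f$ into the same ``normalized'' morphism between the sample points of $X$ and $Y$). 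Since $\mathfrak P$-equivalence is an equivalence relation (noted right after Definition~\ref{def:p equivalent}) and $\sigma_2 f \sigma_1\in\I$, we get that $f$ is $\mathfrak P$-equivalent to an element of $\I$, hence $f\in\Ihat$ by definition of the $\mathfrak P$-completion. This simultaneously proves the ``in particular'' clause in one direction.

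For direction (a): the subtlety is that $\Ihat$ is defined as the \emph{ideal generated by} all morphisms $\mathfrak P$-equivalent to something in $\I$, so a general $f\in\Ihat$ is a linear combination $\sum_i g_i f_i h_i$ where each $f_i$ is $\mathfrak P$-equivalent to some $g_i'\in\I$. I would argue that the property ``$\exists\,\sigma_1,\sigma_2\in\Sigma_{\mathfrak P}$ with $\sigma_2 f\sigma_1\in\I$'' is preserved under the operations building the ideal. The cleanest route: first show that the set $\mathcal S := \{\, f \mid \exists\,\sigma_1,\sigma_2\in\Sigma_{\mathfrak P},\ \sigma_2 f\sigma_1\in\I\,\}$ actually \emph{is} an ideal containing every morphism $\mathfrak P$-equivalent to an element of $\I$ (for the latter, if $f$ is $\mathfrak P$-equivalent to $g\in\I$ then by Definition~\ref{def:p equivalent} there are $z,w$ with $\eta_{wy}f\eta_{xz}=\eta_{w y'}g\eta_{x'z}\in\I$, and $\eta_{wy},\eta_{xz}\in\Sigma_{\mathfrak P}$); then $\Ihat\subseteq\mathcal S$ is automatic by minimality of $\Ihat$. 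Checking $\mathcal S$ is an ideal is where I expect the real work: closure under left/right multiplication by arbitrary morphisms of $\C$ requires pushing the $\Sigma_{\mathfrak P}$-prefactors past an arbitrary $h$, which is not possible in general — instead one absorbs $h$ into $f$ and re-finds new $\sigma$'s using that the endpoints of $hf$ or $fh$ still lie in \emph{some} cell (singleton cells $\{x\}$ for $x\in Q_0$ make the trivial morphism $e_x$ an element of $\Sigma_{\mathfrak P}$, which handles the case when an endpoint is a quiver vertex). Closure under addition needs that two morphisms with the same source and target can have their $\sigma$-pairs made common, which again follows from the partial-inverse remark. The main obstacle, then, is verifying this ideal-closure of $\mathcal S$ carefully, keeping track of which cell each endpoint lives in; everything else is a direct unwinding of the definitions.
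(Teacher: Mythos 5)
Your proposal is correct and follows essentially the same route as the paper: the sufficiency direction is the observation that $f$ and $\sigma_2 f \sigma_1$ are $\mathfrak{P}$-equivalent (up to a harmless scalar), and the necessity direction amounts to verifying that the set of morphisms admitting such a pair $(\sigma_1,\sigma_2)$ is an ideal containing every morphism $\mathfrak{P}$-equivalent to an element of $\I$. The paper carries out exactly the closure check you defer, in concrete form: writing $f=\sum_i y_i f_i x_i$ with each $f_i$ $\mathfrak{P}$-equivalent to some $g_i\in\I$, it produces a single global pair $\sigma_1=\eta_{ss'}$, $\sigma_2=\eta_{t't}$ by taking the minimum/maximum over the relevant cells of the sources and targets of the $\sigma_{i1},\sigma_{i2}$, and then shows each term $\eta_{t't}y_i f_i x_i\eta_{ss'}$ factors through $\sigma_{i2}f_i\sigma_{i1}\in\I$ --- precisely the ``make the $\sigma$-pairs common'' and ``absorb the outer morphisms via factorization through the $\sigma$'s (or through a further element of $\Sigma_\mathfrak{P}$ when the outer morphism itself lies in $\Sigma_\mathfrak{P}$)'' mechanisms you identify.
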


\begin{proof}
    The sufficiency is clear, because $f$ and $\sigma_2 f \sigma_1$ are $\mathfrak{P}$-equivalent.
    For the necessity, assume that $f \in \Ihat$. Then
    $$f = y_1 f_1 x_1 + \cdots + y_r f_r x_r$$
    where each $f_i$ is $\mathfrak{P}$-equivalent to a morphism $g_i \in \I$. Let $s, t$ be the respective domain and codomain of all $y_if_ix_i$. 
    For each $i$, there are $\sigma_{i1}, \sigma_{i2} \in \Sigma_\mathfrak{P}$ with $\sigma_{i2} f_i \sigma_{i1} \in \I$. We let $s' = {\rm min}(s, s(\sigma_{11}), \ldots, s(\sigma_{r1}))$ $t' = {\rm max}(t, t(\sigma_{12}), \ldots, s(\sigma_{r2}))$. We consider $\eta_{t't}f\eta_{ss'}$ to get
    $$\eta_{t't}f\eta_{ss'} = \eta_{t't}y_1 f_1 x_1\eta_{ss'} + \cdots + \eta_{t't}y_r f_r x_r\eta_{ss'}.$$
    Note that if $x_i$ is not in $\Sigma_\mathfrak{P}$, then we get the factorization $x_i = \sigma_{i_1}x_i'$. If $x_i$ is in $\Sigma_\mathfrak{P}$, then $x_i\eta_{ss'}$ also factors through $\sigma_{i_1}$. We get similar statements considering the $y_i$ and $\eta_{t't}y_i$. This shows that every term $\eta_{t't}y_1 f_i x_1\eta_{ss'}$ factors through $\sigma_{i2}f_i\sigma_{i1}$, showing that $\eta_{t't}f\eta_{ss'} \in \I$.
\end{proof}

We now assume that our ideal $\I$ is $\mathfrak{P}$-complete. Recall that we seek a functor $\mathbf{r}: \Cbar \to \Cbar(\M)$ that is left inverse to the inclusion functor $\iota$ and that inverts morphisms in $\Sigma_\mathfrak{P}$.  We define the functor $\mathbf{r}$ as follows. For an object $x$ in $\Cbar$, we let $s_x$ be the unique element of $\M$ such that $x, s_x$ belong to the same cell of $\mathfrak{P}$. If $p: x \to y$ is a morphism, then there is a morphism $p': s_x \to s_y$ with the property that $p, p'$ are $\mathfrak{P}$-equivalent. We set $\mathbf{r}(x)=s_x$ and $\mathbf{r}(p) = p'$.

\begin{lemma}\label{lem:r iota is the identity on Cbar of M}
    Assume $\I$ is $\mathfrak{P}$-complete for some valid partition $\mathfrak{P}$ and $\mathbf{r}$ is given as above.
    Then
    \begin{enumerate}
        \item $\mathbf{r}$ is well-defined,
        \item $\mathbf{r}$ inverts morphisms in $\Sigma_{\mathfrak{P}}$, and
        \item $\mathbf{r}\iota$ is the identity on $\Cbar(\M)$.
    \end{enumerate}
\end{lemma}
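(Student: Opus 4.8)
The statement is a bookkeeping check that the map $\mathbf{r}$ built from $\mathfrak{P}$-equivalence representatives is a genuine functor with the advertised properties. The plan is to verify the three claims in order, using Lemma~\ref{comparing I and I hat} as the sole nontrivial input.

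\emph{Well-definedness.} First I would check that $\mathbf{r}$ is defined on objects: for each object $x$ of $\Cbar$, the cell of $\mathfrak{P}$ containing $x$ is well-defined (partitions are, after all, partitions), and a sample $\M$ contains exactly one point $s_x$ of that cell, so $\mathbf{r}(x) = s_x$ is unambiguous. For morphisms, the content is that for $p \colon x \to y$ there exists a morphism $p' \colon s_x \to s_y$ that is $\mathfrak{P}$-equivalent to $p$, and that $p'$ is unique as a morphism in $\Cbar(\M)$, i.e. unique modulo $\I$ (equivalently, modulo $\Ihat$, since $s_x, s_y$ are the chosen representatives). Existence: set $p' := \eta_{s_y, y}\, p\, \eta_{x, s_x}$ computed in $\Cbar$; this lies in $\Hom_{\Cbar(\M)}(s_x, s_y)$ and is $\mathfrak{P}$-equivalent to $p$ by taking $z = s_x \in X$ and $w = s_y \in Y$ in Definition~\ref{def:p equivalent} (using that $\eta_{s_y, s_y} = e_{s_y}$ and $\eta_{s_x, s_x} = e_{s_x}$). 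For uniqueness, suppose $p_1', p_2' \colon s_x \to s_y$ are both $\mathfrak{P}$-equivalent to $p$; then they are $\mathfrak{P}$-equivalent to each other, so their difference $p_1' - p_2'$ is $\mathfrak{P}$-equivalent to $0 \in \I$, hence lies in $\Ihat$ by Lemma~\ref{comparing I and I hat}, hence is zero in $\Cbar = \Chat$. Here one must be slightly careful that $p_1' - p_2'$, being a difference of $\mathfrak{P}$-equivalent morphisms rather than a single $\mathfrak{P}$-equivalent one, still lands in $\Ihat$: this is exactly the content of Lemma~\ref{comparing I and I hat}, which says $f \in \Ihat$ iff $\sigma_2 f \sigma_1 \in \I$ for some $\sigma_i \in \Sigma_\mathfrak{P}$, and $\mathfrak{P}$-equivalence of $p_1', p_2'$ gives precisely such $\sigma_1, \sigma_2$ with $\sigma_2(p_1' - p_2')\sigma_1 = \sigma_2 p_1' \sigma_1 - \sigma_2 p_2' \sigma_1 = 0 \in \I$. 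I then check functoriality: $\mathbf{r}(e_x) = e_{s_x} = e_{\mathbf{r}(x)}$ (the identity is $\mathfrak{P}$-equivalent to itself), and $\mathbf{r}(qp) = \mathbf{r}(q)\mathbf{r}(p)$ because $\mathbf{r}(q)\mathbf{r}(p)$ is a morphism $s_x \to s_z$ that is $\mathfrak{P}$-equivalent to $qp$ (compose the witnessing $\eta$'s), and such a morphism is unique by the argument just given.

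\emph{Inverting $\Sigma_\mathfrak{P}$ and $\mathbf{r}\iota = \mathrm{id}$.} For (2), take $\sigma = \eta_{y,x}$ (or a nonzero scalar multiple) with $x, y$ in a common cell $X$; then $s_x = s_y$, so $\mathbf{r}(\sigma) \colon s_x \to s_x$ is the unique morphism $\mathfrak{P}$-equivalent to $\sigma$. But $\sigma$ is $\mathfrak{P}$-equivalent to $e_{s_x}$ (choose $z$ and $w$ appropriately in the cell $X$, using the composition relations $\eta_{w,y}\eta_{y,x}\eta_{x,z} = \eta_{w,z}$), so $\mathbf{r}(\sigma) = e_{s_x}$ is an isomorphism; scalar multiples are handled by $k$-linearity of $\mathbf{r}$, which follows from uniqueness of the $\mathfrak{P}$-equivalent representative. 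For (3), if $x \in \M$ then $x$ is the chosen representative of its cell, so $s_x = x$ and $\mathbf{r}(\iota(x)) = x$; and for a morphism $p$ in $\Cbar(\M)$, both $p$ and $\mathbf{r}(\iota(p))$ are morphisms $x \to y$ in $\Cbar(\M)$ that are $\mathfrak{P}$-equivalent (trivially, $p$ to itself), so they agree by uniqueness. Hence $\mathbf{r}\iota = \mathrm{id}_{\Cbar(\M)}$.

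\textbf{Main obstacle.} Nothing here is deep, but the one genuinely substantive point — the one place the $\mathfrak{P}$-completeness hypothesis is used in an essential way — is the uniqueness of the $\mathfrak{P}$-equivalent representative modulo $\I$, i.e. the step "$\mathfrak{P}$-equivalent to $0$ implies zero in $\Cbar$." This fails without $\mathfrak{P}$-completeness, and its proof is precisely Lemma~\ref{comparing I and I hat} together with the identification $\Cbar = \Chat$. The remaining care is purely clerical: keeping track of which $\eta$-morphisms witness each $\mathfrak{P}$-equivalence and checking that these witnesses compose correctly, which amounts to repeated use of the relations $\eta_{z,y}\eta_{y,x} = \eta_{z,x}$ in the path category.
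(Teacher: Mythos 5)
Your argument for the substantive parts of the lemma coincides with the paper's: the key step is exactly that two morphisms $s_x \to s_y$ both $\mathfrak{P}$-equivalent to $p$ have a difference that is $\mathfrak{P}$-equivalent to the zero morphism, hence lies in $\Ihat = \I$ by Lemma~\ref{comparing I and I hat}, and your handling of (2) and (3) matches the paper's as well. There is, however, one concrete slip in your existence step. The formula $p' := \eta_{s_y,y}\,p\,\eta_{x,s_x}$, with witnesses $z = s_x$ and $w = s_y$ in Definition~\ref{def:p equivalent}, tacitly assumes $s_x \le x$ and $y \le s_y$. A sample point is an arbitrary element of its cell, so when $s_x > x$ the symbol $\eta_{x,s_x}$ does not denote a morphism at all ($\Hom(s_x,x)$ contains no such path-like element), and the chosen $z$ fails the requirement $z \le x_1, x_2$ of the definition. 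The repair is routine: take $z = \min(x,s_x)$ and $w = \max(y,s_y)$, and obtain $p'$ from $p$ by replacing, in each path-like basis element $\eta_{y,s(\beta)}\,q\,\eta_{t(\alpha),x}$ (or $\eta_{y,x}$), the outer factors by $\eta_{s_y,s(\beta)}$ and $\eta_{t(\alpha),s_x}$ (resp.\ $\eta_{s_y,s_x}$); the relations $\eta_{t(\alpha),x}\eta_{x,z} = \eta_{t(\alpha),z} = \eta_{t(\alpha),s_x}\eta_{s_x,z}$ then verify the $\mathfrak{P}$-equivalence. This does not affect the structure of your proof — in particular, your identification of where $\mathfrak{P}$-completeness is genuinely used (killing morphisms $\mathfrak{P}$-equivalent to zero) is exactly right and is the paper's point as well.
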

\begin{proof}
(1). Suppose that $p: x \to y$ is a morphism and that $p', p'': s_x \to s_y$ are two morphisms that are $\mathfrak{P}$-equivalent to $p$. Then $p'-p''$ is $\mathfrak{P}$-equivalent to the zero morphism, so that $p' = p''$ in $\Cbar$. Therefore, the functor $\mathbf{r}$ is well defined on morphisms.
A routine check shows that $\mathbf{r}$ is functorial.

(2). If $x, y$ are in the same cell of $\mathfrak{P}$, then it is clear that $\mathbf{r}(\lambda \eta_{yx}) = \lambda\eta_{s_x}$ so that $\mathbf{r}$ inverts morphisms in $\Sigma_{\mathfrak{P}}$.

(3). It is quick to check that $\mathbf{r}\iota$ is the identity on $\Cbar(\M)$ by definition.
\end{proof}

\section{Localization and functors}~\label{sec:functors}

\subsection{Localization}
We recall that given a valid partition $\mathfrak{P}$, we let $\Sigma_\mathfrak{P}$ be the set of non-zero scalar multiples of the morphisms $\eta_{x,y}$ in $\Cbar$ where $x,y$ are in the same cell from $\mathfrak{P}$. We want to localize $\Cbar$ at  $\Sigma_\mathfrak{P}$. As we will see, this will impose some conditions on $\I$. We first recall calculus of (left) fractions and localizations. The reader is referred to \cite{K10} for more details.

\begin{definition}\label{def:axioms for localisation}
Let $\Sigma$ be a set of morphisms in a $k$-category $\mathcal{D}$. We say that $\Sigma$ admits a \emph{calculus of left fractions in} $\mathcal{D}$ if the following three axioms are satisfied:
\begin{enumerate}
    \item[(LF1)]  The set $\Sigma$ is closed under composition and contains all identity morphisms.
    \item[(LF2)] Each pair of morphisms $f:x \to y$ and $\sigma: x \to x'$ with $\sigma \in \Sigma$ can be completed to a commutative square
    $$\xymatrix{x \ar[r]^f \ar[d]^\sigma & y \ar[d]^{\sigma'} \\
    x' \ar[r]^{f'} & z}$$
    where $\sigma' \in \Sigma$.
    \item[(LF3)] If $f, g: x \to y$ are morphisms with $f \sigma = g \sigma$ for some $\sigma \in \Sigma$, then there exists $\sigma' \in \Sigma$ with $\sigma' f = \sigma' g$.
\end{enumerate}

\end{definition}

\begin{definition}
    Let $\Sigma$ be a set of morphisms in $\mathcal{D}$ admitting a calculus of left fractions in $\mathcal{D}$. Then there exists a $k$-linear category $\mathcal{D}[\Sigma^{-1}]$ with a $k$-linear functor $\pi: \mathcal{D} \to \mathcal{D}[\Sigma^{-1}]$ that inverts morphisms in $\Sigma$ with the universal property that for any $k$-linear functor $F: \mathcal{D} \to \mathcal{E}$ that inverts morphisms in $\Sigma$, there exists a unique functor $G: \mathcal{D}[\Sigma^{-1}] \to \mathcal{E}$ with $F = G\pi$. 
\end{definition}

The category $\mathcal{D}[\Sigma^{-1}]$ above can be constructed as follows. It has as objects the same as the objects of $\mathcal{D}$. Its morphisms are given by left fractions up to an equivalence relation. A \emph{left fraction} in $\Hom_{\mathcal{D}[\Sigma^{-1}]}(x,y)$ is a pair $(f, \sigma)$ of morphisms such that $f: x \to y', \sigma: y \to y'$ and $\sigma \in \Sigma$. Two left fractions $(f_1: x \to y', \sigma_1: y \to y')$ and $(f_2: x \to y'', \sigma_2: y \to y'')$ are equivalent if there exists an object $z$ and a diagram 
$$\xymatrix{& y' \ar[d]^{u} & \\ x \ar[ur]^{f_1} \ar[dr]_{f_2} \ar[r]^{f_3} & z & y \ar[ul]_{\sigma_1} \ar[dl]^{\sigma_2} \ar[l]_{\sigma_3} \\
& y'' \ar[u]^{v} &}$$ where all inner triangles commute and where $\sigma_3 \in \Sigma$. The functor $\pi: \mathcal{D} \to \mathcal{D}[\Sigma^{-1}]$ can be taken to be the functor that is the identity on objects and which sends $f: x \to y$ to the left fraction $(f, \id_y)$.

\begin{proposition} \label{localization complete}
    If $\I$ is $\mathfrak{P}$-complete, then the set $\Sigma_\mathfrak{P}$ admits a calculus of left (and right) fractions. 
\end{proposition}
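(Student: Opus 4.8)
The plan is to verify the three axioms (LF1), (LF2), (LF3) directly for $\Sigma_{\mathfrak{P}}$ in $\Cbar = \C/\I$, using $\mathfrak{P}$-completeness wherever a morphism needs to be ``moved'' into $\I$. Axiom (LF1) is immediate: the composite $\eta_{z,y}\eta_{y,x} = \eta_{z,x}$ stays inside a single cell whenever $x,y,z$ do, and identities $e_x = \eta_{x,x}$ are of this form (with the usual caveat excluding the case $\oP$ equals a single cell containing a loop, which does not arise for valid partitions since $Q_0$-vertices are singleton cells). Scalar multiples are handled since $\Sigma_{\mathfrak{P}}$ is closed under them and an invertible scalar times $\eta_{y,x}$ is still in $\Sigma_{\mathfrak{P}}$.

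For (LF2), given $f: x \to y$ and $\sigma = \lambda\eta_{x',x} \in \Sigma_{\mathfrak{P}}$ (so $x, x'$ lie in a common cell $X$), I would produce the completing square by taking $z = y$, $\sigma' = e_y \in \Sigma_{\mathfrak{P}}$, and $f': x' \to y$ to be (a scalar multiple of) the morphism $\mathfrak{P}$-equivalent to $f$ that exists because $x, x'$ are in the same cell — concretely, using Definition~\ref{def:p equivalent}, there is $f'$ with $\eta_{w,y} f \eta_{x,u} = \eta_{w,y} f' \eta_{x',u}$ for suitable $u \in X$, $w$; one then checks that in $\Cbar$ one can choose $f'$ so that $f = f'\sigma$ outright (here $\mathfrak{P}$-completeness of $\I$ is what guarantees the required equality descends to the quotient, via Lemma~\ref{comparing I and I hat}). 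The right-fraction version is dual. I expect this to require a little care about normalizing scalars and about the direction of the $\eta$'s relative to $f$, but no real difficulty.

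Axiom (LF3) is where $\mathfrak{P}$-completeness does the essential work and is the main obstacle. Suppose $f, g: x \to y$ in $\Cbar$ with $f\sigma = g\sigma$ for some $\sigma = \lambda\eta_{x,x_0} \in \Sigma_{\mathfrak{P}}$, where $x, x_0$ share a cell $X$. Then $(f-g)\sigma = 0$ in $\Cbar$, i.e. $(f-g)\eta_{x,x_0} \in \I$ (after clearing the scalar). I want to find $\sigma' \in \Sigma_{\mathfrak{P}}$ with $\sigma'(f-g) \in \I$. The idea is that $(f-g)\eta_{x,x_0}$ and $f - g$ are $\mathfrak{P}$-equivalent (both have domain in $X$, codomain at the single object $y$, and pre/post-composing with suitable $\eta$'s inside the cells identifies them), so $f - g \in \Ihat = \I$ by Lemma~\ref{comparing I and I hat}; hence already $f = g$ in $\Cbar$, and one may take $\sigma' = e_y$. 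The subtlety is that ``$\mathfrak{P}$-equivalent'' requires the pre-composition $\eta_{\,\cdot\,,x_0}$ to land us at a common element of the cell $X$ and, symmetrically on the codomain side, a common element of the cell containing $y$ — since $y$ itself may not be a sample point, one should track the cell $Y \ni y$ and use that $f - g$ factors appropriately; Lemma~\ref{comparing I and I hat} is precisely the tool that packages ``pre- and post-compose by elements of $\Sigma_{\mathfrak{P}}$ to land in $\I$'' into ``lies in $\Ihat$.'' Once (LF1)–(LF3) are checked, the conclusion follows from the standard construction of $\Cbar[\Sigma_{\mathfrak{P}}^{-1}]$ recalled above; the right-fraction statement is the formal dual, using that $\Sigma_{\mathfrak{P}}$ and $\I$ are symmetric under reversing the order on each $\oP$.
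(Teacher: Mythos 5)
Your overall strategy---direct verification of (LF1)--(LF3), with $\mathfrak{P}$-completeness doing its work only in (LF3) via the observation that $(f-g)\sigma$ and $f-g$ are $\mathfrak{P}$-equivalent so that Lemma~\ref{comparing I and I hat} gives $f-g\in\I$---is exactly the paper's proof, and your (LF1) and (LF3) are correct as written. The gap is in (LF2). Your completing square fixes $z=y$ and $\sigma'=e_y$, which forces you to produce $f':x'\to y$ with $f=f'\sigma$. When $f$ leaves the thread this is fine: every path-like element $\eta_{y,s(\beta)}\,p\,\eta_{t(\alpha),x}$ factors through $\eta_{x'x}$ because $x'\le t(\alpha)$, so the factorization already holds on the nose in $\C$ (no $\mathfrak{P}$-completeness is needed here, contrary to what you suggest). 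But if $f=\mu\eta_{yx}$ is itself in $\Sigma_\mathfrak{P}$ with $x\le y<x'$ in the common cell, then $\Hom_{\C}(x',y)$ is in general zero (a nonzero morphism would require a path in $Q$ from $t(\alpha)$ back to $s(\alpha)$), so there is no morphism $x'\to y$ at all---neither one satisfying $f=f'\sigma$ nor one $\mathfrak{P}$-equivalent to $f$---and your square cannot be built. The repair is to let the corner move: take $z=x'$, $f'=\id_{x'}$ and $\sigma'=(\lambda/\mu)\eta_{x'y}\in\Sigma_\mathfrak{P}$, so that $\sigma'f=\lambda\eta_{x'x}=f'\sigma$. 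This is precisely how the paper handles that case; with this one correction (and the dual adjustment for right fractions) your argument goes through.
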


\begin{proof}
    Assume that $\I$ is $\mathfrak{P}$-complete. We analyze all three axioms (LF1), (LF2) and (LF3). For (LF1), this follows from the facts that for $x \in \Cbar$, the morphism $\eta_{xx}$ is the identity morphism and that for $x \le y \le z$ in a given cell of $\mathfrak{P}$, we have $\eta_{zy}\eta_{yx} = \eta_{zx}$. 

For (LF2), let  $x'\stackrel{{\sigma}}{\leftarrow} x \stackrel{{\alpha}}\rightarrow y$ be two morphisms with $\sigma\in \Sigma_{\mathfrak{P}}$. We need to show that we can complete these morphisms to a commutative diagram \[\xymatrix{x \ar^{\alpha}[r] \ar_{\sigma}[d] & y \ar^{\sigma'}[d] \\
           x' \ar^{\alpha'}[r] & y'  }\] with $\sigma'\in \Sigma_{\mathfrak{P}}.$
If $\alpha$ does not lies in $\Sigma_{\mathfrak{P}}$, then as a morphism of $\C$, $\alpha$ factors through $\sigma$ and we can find a morphism $\alpha': x' \to y$ with $\alpha = \alpha' \sigma$ and we are done by setting $\sigma' = \id$. If $\alpha$ lies in $\Sigma_{\mathfrak{P}}$ and $x \le x' \le y$, then the above argument applies. If $x' > y$, then we simply take $y' = x'$, $\alpha' = \id$ and $\sigma'$ is taken to be the required multiple of $\eta_{x'y}$ so that $\sigma = \sigma'\alpha$.

 For (LF3), let $\alpha,\beta:x\to y \in \C$ such that $\alpha\sigma = \beta\sigma$ for $\sigma \in \Sigma_\mathfrak{P}$. This implies that $(\alpha - \beta)\sigma \in \I$. Since $(\alpha - \beta)\sigma$ and $\alpha - \beta$ are $\mathfrak{P}$-equivalent, this means that $\alpha - \beta \in \I$, hence that $\alpha = \beta$ in $\Cbar$. Thus, we can simply take $\sigma'$ to be the identity morphism and we get $\sigma' \alpha = \sigma' \beta$.
 
The statement for right fractions follows from dual arguments.
\end{proof}

\begin{remark}
    In the above statement, we may have that $\Sigma_\mathfrak{P}$ admits a calculus of left fractions in $\Cbar$ even if $\I$ is not $\mathfrak{P}$-complete. Let $(Q,\mathcal{P})$ be the $\mathbb{A}_2$ quiver with arrow $\alpha$ and assume that $\mathcal{P}_\alpha$ is an open real interval. Consider the valid partition $\mathfrak{P}$ with cells given by the two vertices of $Q$ together with the entire $\mathcal{P}_\alpha$.
    
    Set $\I$ be the ideal generated by all $\eta_{y,x}$ where $x < y$ in $\mathcal{P}_\alpha$. Note that $\I$ is not $\mathfrak{P}$-complete because $\Ihat$ is the ideal generated by all $\eta_{y,x}$ where $x \leq y$ in $\mathcal{P}_\alpha$. However, we can still localize $\Sigma_\mathfrak{P}$ in $\Cbar$ as one can check that all three axioms in Definition~\ref{def:axioms for localisation} are satisfied.

    Choose a sample $\M$ of $\mathfrak{P}$.
    Note that $\Cbar(\M)$ has three nonzero objects but $\Chat(\M)$ has only two nonzero objects.
    Thus, the categories $\Cbar(\M)$ and $\Chat(\M)$ are not equivalent.
\end{remark}

The following proposition tells us that if $\I$ is not $\mathfrak{P}$-complete but $\Sigma_\mathfrak{P}$ admits a calculus of left fractions in $\Cbar$, then we can still complete $\I$ to get $\Ihat$ and the localized categories $\Cbar[\Sigma_\mathfrak{P}^{-1}]$ and $\Chat[\Sigma_\mathfrak{P}^{-1}]$ become equivalent.

\begin{proposition}
    Assume that $\Sigma_\mathfrak{P}$ admits a calculus of left fractions in $\Cbar$. Then $\Cbar[\Sigma_\mathfrak{P}^{-1}]$ is equivalent to $\Chat[\Sigma_\mathfrak{P}^{-1}]$.
\end{proposition}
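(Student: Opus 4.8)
The plan is to build the canonical comparison functor between the two localizations from universal properties and then check it is an isomorphism of categories, the only substantive input being Lemma~\ref{comparing I and I hat}. Write $q\colon\C\to\Cbar$ and $\bar q\colon\C\to\Chat$ for the two quotient functors and $q'\colon\Cbar\to\Chat$ for the quotient induced by $\I\subseteq\Ihat$, so that $q'q=\bar q$; all three are full and bijective on objects. Write $\pi\colon\Cbar\to\Cbar[\Sigma_\mathfrak{P}^{-1}]$ and $\ell\colon\Chat\to\Chat[\Sigma_\mathfrak{P}^{-1}]$ for the localization functors. (The hypothesis gives the calculus-of-left-fractions description on the source side; since one checks $\widehat{\Ihat}=\Ihat$ from Lemma~\ref{comparing I and I hat} and (LF1), the ideal $\Ihat$ is $\mathfrak{P}$-complete and Proposition~\ref{localization complete} gives the same on the target side — but for the argument below we shall only use the universal properties.)

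First I would show that $\ell q'\colon\Cbar\to\Chat[\Sigma_\mathfrak{P}^{-1}]$ inverts every $\sigma\in\Sigma_\mathfrak{P}$: if the underlying $\eta_{yx}$ is not in $\Ihat$ then $q'(\sigma)$ is a nonzero scalar multiple of $\eta_{yx}$ in $\Chat$, hence lies in the image of $\Sigma_\mathfrak{P}$ and is inverted by $\ell$; if $\eta_{yx}\in\Ihat$ then, combining Lemma~\ref{comparing I and I hat} with the remark on vanishing cells following the definition of $\Ihat$, the cell of $x,y$ collapses to zero in $\Chat$ and a morphism between zero objects is automatically an isomorphism. By the universal property of $\pi$ there is a unique $\bar\Phi\colon\Cbar[\Sigma_\mathfrak{P}^{-1}]\to\Chat[\Sigma_\mathfrak{P}^{-1}]$ with $\bar\Phi\pi=\ell q'$. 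The key step is then the converse direction: $\pi q\colon\C\to\Cbar[\Sigma_\mathfrak{P}^{-1}]$ annihilates $\Ihat$. Given $f\in\Ihat$, Lemma~\ref{comparing I and I hat} provides $\sigma_1,\sigma_2\in\Sigma_\mathfrak{P}$ with $\sigma_2 f\sigma_1\in\I$; lifting $\sigma_1,\sigma_2$ to the corresponding $\eta$'s in $\C$ (which are nonzero in $\Cbar$) gives $\pi(q\sigma_2)\,\pi(qf)\,\pi(q\sigma_1)=\pi(q(\sigma_2 f\sigma_1))=0$ because $\sigma_2 f\sigma_1\in\I=\ker q$, and as $\pi(q\sigma_1),\pi(q\sigma_2)$ are invertible we get $\pi(qf)=0$. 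Hence $\pi q$ factors uniquely as $\tilde\pi\bar q$ with $\tilde\pi\colon\Chat\to\Cbar[\Sigma_\mathfrak{P}^{-1}]$, and since $\tilde\pi$ sends the image in $\Chat$ of any $\sigma\in\Sigma_\mathfrak{P}$ to $\pi(\sigma)$, which is invertible, it factors as $\Psi\ell$ for a unique $\Psi\colon\Chat[\Sigma_\mathfrak{P}^{-1}]\to\Cbar[\Sigma_\mathfrak{P}^{-1}]$.

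Finally I would check $\bar\Phi$ and $\Psi$ are mutually inverse. From $\Psi\bar\Phi\pi=\Psi\ell q'=\tilde\pi q'$ and $\tilde\pi q'q=\tilde\pi\bar q=\pi q$, fullness and bijectivity on objects of $q$ force $\tilde\pi q'=\pi$, so $\Psi\bar\Phi\pi=\pi$ and, by the uniqueness clause in the universal property of $\pi$, $\Psi\bar\Phi=\mathrm{id}$. Dually, $\bar\Phi\Psi\ell=\bar\Phi\tilde\pi$ and $\bar\Phi\tilde\pi\bar q=\bar\Phi\pi q=\ell q'q=\ell\bar q$, and fullness and bijectivity on objects of $\bar q$ force $\bar\Phi\tilde\pi=\ell$, whence $\bar\Phi\Psi=\mathrm{id}$. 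Thus $\bar\Phi$ is an isomorphism, in particular an equivalence, of categories.

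The main obstacle is the claim that $\pi q$ kills $\Ihat$, i.e.\ that completing $\I$ to $\Ihat$ adds nothing once $\Sigma_\mathfrak{P}$ is inverted; this is exactly the point of Lemma~\ref{comparing I and I hat}, and after it the rest is bookkeeping with universal properties. The one wrinkle requiring care is the degenerate situation where some $\eta_{yx}\in\Sigma_\mathfrak{P}$ lies in $\Ihat\smallsetminus\I$: its cell survives (possibly non-trivially) in $\Cbar$ but collapses in $\Chat$, so one must confirm that $\ell q'$ still inverts it — which holds because morphisms between zero objects are invertible — and one notes consistently that $\pi$ then sends it to a zero morphism between zero objects of $\Cbar[\Sigma_\mathfrak{P}^{-1}]$. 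Alternatively one could run the whole argument through the explicit left-fraction description of the Hom-sets, available by Proposition~\ref{localization complete} on both sides, checking directly that $\bar\Phi$ is full (lifting fractions along the full functor $q'$) and faithful (using Lemma~\ref{comparing I and I hat} to detect when two lifted fractions become equal), but the universal-property route above is shorter.
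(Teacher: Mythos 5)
Your proof is correct — indeed it establishes the slightly stronger conclusion that the two localizations are isomorphic categories — but it takes a genuinely different route from the paper's. The paper constructs only the forward functor $F\colon \Cbar[\Sigma_\mathfrak{P}^{-1}] \to \Chat[\Sigma_\mathfrak{P}^{-1}]$ with $F\pi_1=\pi_2 p$, notes that it is the identity on objects and sends a left fraction $(f,\sigma)$ to the left fraction $(f,\sigma)$, so that density and fullness are immediate, and then devotes the bulk of the argument to faithfulness: starting from two left fractions whose images become equivalent in $\Chat[\Sigma_\mathfrak{P}^{-1}]$, it massages the witnessing diagram, invoking Lemma~\ref{comparing I and I hat} twice together with axiom (LF3), until it produces a witnessing diagram already valid in $\Cbar$. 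You instead manufacture an explicit inverse: the key step is that $\pi q$ annihilates $\Ihat$, which is precisely where Lemma~\ref{comparing I and I hat} enters for you (pre- and post-compose with the $\sigma_i$, land in $\I$, cancel the invertible $\pi(q\sigma_i)$), after which the uniqueness clauses of the universal properties do all the remaining bookkeeping. The mathematical content coincides — completing $\I$ to $\Ihat$ adds nothing once $\Sigma_\mathfrak{P}$ is inverted — but your packaging avoids manipulating equivalence classes of fractions entirely, at the cost of losing the explicit description of the comparison functor on Hom-sets that the paper's computation provides. Your two side remarks are also correct and worth keeping: that $\widehat{\Ihat}=\Ihat$ (by transitivity of $\mathfrak{P}$-equivalence via Lemma~\ref{comparing I and I hat}), so Proposition~\ref{localization complete} supplies the target-side localization, and that a $\sigma\in\Sigma_\mathfrak{P}$ whose underlying $\eta_{yx}$ lies in $\Ihat\smallsetminus\I$ causes its cell to collapse in $\Chat$ (and, after inverting the resulting zero morphism, in $\Cbar[\Sigma_\mathfrak{P}^{-1}]$ as well), so invertibility of its image is automatic.
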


\begin{proof}
   Consider the localization functors $\pi_1: \Cbar \to \Cbar[\Sigma_\mathfrak{P}^{-1}]$ and $\pi_2 : \Chat \to \Chat[\Sigma_\mathfrak{P}^{-1}]$ where the first exists by our assumption and the second exists by Proposition \ref{localization complete}. Finally, consider the projection $p: \Cbar \to \Chat$. Since $\pi_2 p$ sends all morphisms of $\Sigma_\mathfrak{P}$ to isomorphisms, there exists $F: \Cbar[\Sigma_\mathfrak{P}^{-1}] \to \Chat[\Sigma_\mathfrak{P}^{-1}]$ with $F \pi_1 = \pi_2 p$. This functor $F$ is the identity on objects and sends the left fraction $(f, \sigma)$ in $\Cbar[\Sigma_\mathfrak{P}^{-1}]$ to the left fraction $(f,\sigma)$ in $\Chat[\Sigma_\mathfrak{P}^{-1}]$. Hence, $F$ is clearly full and dense. For the faithfullness, let $(f_1,\sigma_1), (f_2, \sigma_2)$
with $f: x \to y'$ and $g: x \to y''$ morphisms in $\C$ and $\sigma: y \to y', \sigma_2: y \to y''$ morphisms in $\Sigma_\mathfrak{P}$. We assume that the left fractions $(f_1, \sigma_1), (f_2, \sigma_2)$ are equivalent in $\Chat$. Hence, we get an object $z$ with a diagram $$\xymatrix{& y' \ar[d]^{u} & \\ x \ar[ur]^{f_1} \ar[dr]_{f_2} \ar[r]^{f_3} & z & y \ar[ul]_{\sigma_1} \ar[dl]^{\sigma_2} \ar[l]_{\sigma_3} \\
& y'' \ar[u]^{v} &}$$ of morphisms in $\C$
where all four inner triangles commute in $\Chat$ and $\sigma_3 \in \Sigma_\mathfrak{P}$. In particular, $uf_1 - vf_2 \in \widehat{I}$. By Lemma \ref{comparing I and I hat}, there exist $\sigma_4, \sigma_5 \in \Sigma_\mathfrak{P}$ such that $\sigma_5(uf_1-vf_2)\sigma_4 \in \I$. In the category $\Cbar[\Sigma_\mathfrak{P}^{-1}]$, we get
$(\sigma_5,\id)(uf_1 - vf_2, \id)(\sigma_4, \id) = (0,\id)$. Since $(\sigma_4,\id)$ and $(\sigma_5,\id)$ are isomorphisms, this yields that $(uf_1-vf_2,\id) = (0,\id)$. This gives us the diagram
$$\xymatrix{& z \ar[d]^{\alpha} & \\ x \ar[ur]^{uf_1-vf_2} \ar[dr]_{0} \ar[r]^{0} & w & z \ar[ul]_{\id} \ar[dl]^{\id} \ar[l]_{\alpha} \\
& z \ar[u]^{\alpha} &}$$
with $\alpha \in \Sigma_\mathfrak{P}$ where all four inner triangles commute in $\Cbar$. Observe that $\alpha(u\sigma_1 - v\sigma_2) \in \Ihat$. Hence, we can find $\beta, \gamma \in \Sigma_\mathfrak{P}$ with $\gamma\alpha(u\sigma_1 - v\sigma_2)\beta \in \I$. The third axiom of localizations yields a $\beta' \in \Sigma_\mathfrak{P}$ with
$\beta'\gamma\alpha(u\sigma_1 - v\sigma_2) \in \I$. Considering $\sigma:= \beta'\gamma\alpha$ and
the diagram
$$\xymatrix{& y' \ar[d]^{\sigma u} & \\ x \ar[ur]^{f_1} \ar[dr]_{f_2} \ar[r]^{\sigma f_3} & w' & y \ar[ul]_{\sigma_1} \ar[dl]^{\sigma_2} \ar[l]_{\sigma\sigma_3} \\
& y'' \ar[u]^{\sigma v} &}$$
in $\Cbar$, this yields that $(f_1, \sigma_1)$ and $(f_2, \sigma_2)$ are equivalent left fractions in  $\Cbar[\Sigma_\mathfrak{P}^{-1}]$.
\end{proof}

Now, the functor $\mathbf{r}$ is such that all morphisms in $\Sigma_{\mathfrak{P}}$ are sent to isomorphisms. Therefore, by the universal property of the localization, there is a unique functor $\Phi: \Cbar[\Sigma_{\mathfrak{P}}^{-1}] \to \Cbar(\M)$ such that $\Phi\pi = \mathbf{r}$. The table in Figure~\ref{fig:table of functors} will help us to follow the proofs in the next couple of pages.  The left part displays the functors we have defined so far while the right part is the corresponding induced functors on the categories of pointwise finite-dimensional representations.

\begin{figure}[h]
    \centering
   \begin{tabular}{ |c| }
  \hline
  \xymatrix@C=13ex{
    & \Cbar \ar@{->>}[dr] \\
      \Cbar(\M) \ar[ur]^-{\bar{\iota}} \ar@{->>}[dr] & & \Chat \ar[dr]^{\pi} \ar@/_1pc/[dl]_{\mathbf{r}} & \\
      & \Chat(\mathcal{M}) \ar[rr]^{\pi\iota} \ar[ur]_(.55){\iota} && \Chat[\Sigma_\mathfrak{P}^{-1}] \ar@/^1pc/[ll]^{\Phi}
  } \\ \hline
  \xymatrix@C=2ex{
        & \rpwf \Cbar \ar[dl]_-{\bar{\iota}^*} \\
      \rpwf (\Cbar(\M)) & & \rpwf\left(\Chat\right)  \ar[dl]^(.3){\iota^*}  \ar@{_(->}[ul] & \\
      & \rpwf \left(\Chat(\mathcal{M})\right) \ar@/^1pc/[ur]^{\mathbf{r}^*} \ar@/_1pc/[rr]_{\Phi^*}  \ar@{_(->}[ul] && \rpwf \left(\Chat[\Sigma_\mathfrak{P}^{-1}]\right) \ar[ll]_{(\pi\iota)^*=\iota^*\pi^*} \ar[ul]_{\pi^*}
  } \\
  \hline
\end{tabular}
    \caption{Table of functors. When $\I$ is $\mathfrak{P}$-complete, the quotient arrows  $\Cbar\twoheadrightarrow\Chat$ and $\Cbar(\M)\twoheadrightarrow\Chat(\M)$ become identity arrows and the diagrams collapse to the lower triangles.}
    \label{fig:table of functors}
\end{figure}

\begin{proposition}\label{prop:equivalence from localized to sample}
The functor $\pi\iota: \Chat(\mathcal{M}) \to \Chat[\Sigma_\mathfrak{P}^{-1}]$ is an equivalence with quasi-inverse $\Phi$.
\end{proposition}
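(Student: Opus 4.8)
The plan is to show that $\pi\iota$ and $\Phi$ are mutually quasi-inverse by exploiting the factorization $\Phi\pi = \mathbf{r}$ on $\Chat$ together with the fact that $\mathbf{r}\iota = \mathrm{id}_{\Chat(\M)}$, which is Lemma~\ref{lem:r iota is the identity on Cbar of M}(3) applied to the $\mathfrak{P}$-complete ideal $\Ihat$ (note $\widehat{\Ihat} = \Ihat$, so $\Chat$ plays the role of $\Cbar$ in that lemma). First I would compute $\Phi \circ (\pi\iota) = (\Phi\pi)\iota = \mathbf{r}\iota = \mathrm{id}_{\Chat(\M)}$, which already gives one of the two composites on the nose. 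So it remains to produce a natural isomorphism $(\pi\iota)\circ\Phi \cong \mathrm{id}_{\Chat[\Sigma_\mathfrak{P}^{-1}]}$.

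For the other composite, I would argue at the level of the universal property rather than chase left fractions directly. On objects, $\pi\iota\Phi$ sends $x$ to $\pi(s_x)$, where $s_x\in\M$ is the sample point in the cell of $x$; there is a canonical isomorphism $x \xrightarrow{\sim} \pi(s_x)$ in $\Chat[\Sigma_\mathfrak{P}^{-1}]$, namely the image under $\pi$ of the morphism in $\Sigma_\mathfrak{P}$ connecting $x$ and $s_x$ (one of $\eta_{s_x,x}$ or $\eta_{x,s_x}$, inverted if necessary). Call this isomorphism $\theta_x$. I would then check naturality: for a morphism $\bar f\colon x\to y$ in $\Chat[\Sigma_\mathfrak{P}^{-1}]$, the square relating $\bar f$ and $\pi\iota\Phi(\bar f) = \pi(\mathbf{r}(f))$ via $\theta_x,\theta_y$ commutes. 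This reduces, using that every morphism in the localization is a left fraction $(f,\sigma)$ with $\sigma\in\Sigma_\mathfrak{P}$ and that $\pi$ is surjective-on-Hom up to such fractions, to the statement that $f$ and $\mathbf{r}(f)$ become equal after pre/post-composing with the canonical $\Sigma_\mathfrak{P}$-morphisms — and this is exactly $\mathfrak{P}$-equivalence of $f$ and $\mathbf{r}(f)$ (built into the definition of $\mathbf{r}$), which $\pi$ turns into an honest equality of morphisms in $\Chat[\Sigma_\mathfrak{P}^{-1}]$ since $\mathfrak{P}$-equivalent morphisms agree after inverting $\Sigma_\mathfrak{P}$.

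Alternatively — and this is probably the cleanest write-up — I would invoke the universal property twice: $\pi\iota\colon\Chat(\M)\to\Chat[\Sigma_\mathfrak{P}^{-1}]$ is a localization of $\Chat(\M)$ at the (trivial, since $\Chat(\M)$ has no nonidentity morphisms in $\Sigma_\mathfrak{P}$ other than identities) set $\Sigma_\mathfrak{P}\cap\Chat(\M)$... actually more carefully, I would show $\pi\iota$ is itself a localization of $\Chat$ in disguise: since $\iota$ followed by $\pi$ inverts nothing new and $\mathbf{r}=\Phi\pi$ restricts to the identity on $\Chat(\M)$, the pair $(\Chat(\M),\mathbf{r})$ satisfies the universal property defining $\Chat[\Sigma_\mathfrak{P}^{-1}]$; by uniqueness of localizations up to canonical equivalence, the induced comparison functor — which is precisely $\pi\iota$ — is an equivalence, with inverse the functor induced by $\mathbf{r}$, namely $\Phi$.

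The main obstacle I anticipate is the bookkeeping in the naturality check: one must handle the case distinction $x \le s_x$ versus $x > s_x$ (and similarly for $y$) when writing down $\theta_x$, and confirm that the definition of $\mathbf{r}$ on morphisms (which uses $\mathfrak{P}$-equivalence, itself involving a choice of $z\in X, w\in Y$) is compatible with these canonical comparison isomorphisms in the localization. All of this is routine once one observes that in $\Chat[\Sigma_\mathfrak{P}^{-1}]$ every morphism of $\Sigma_\mathfrak{P}$ is invertible and any two parallel morphisms that are $\mathfrak{P}$-equivalent become equal there, so I would isolate that observation as the single computational lemma and let the rest follow formally from uniqueness of the localization.
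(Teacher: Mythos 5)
Your proof is correct, but it takes a genuinely different route from the paper's. The paper proves the statement by verifying the three-part criterion directly on $\pi\iota$: density (each object is isomorphic in the localization to the sample point of its cell), faithfulness (if $\pi(f)=0$ then $\sigma f\in\Ihat$ for some $\sigma\in\Sigma_\mathfrak{P}$, and $\mathfrak{P}$-completeness of $\Ihat$ forces $f=0$), and fullness (an explicit manipulation showing every left fraction between sample points is equivalent to one of the form $(h,\id)$); only then does it use $\Phi(\pi\iota)=\id$ to identify $\Phi$ as the quasi-inverse. You instead exhibit both composites: $\Phi(\pi\iota)=(\Phi\pi)\iota=\mathbf{r}\iota=\id$ on the nose, and a natural isomorphism $(\pi\iota)\Phi\cong\id$ built from the canonical isomorphisms $\theta_x=\pi(\eta_{s_x,x})^{\pm1}$, with naturality reduced to the observation that $\mathfrak{P}$-equivalent morphisms become equal after conjugating by the (now invertible) morphisms of $\Sigma_\mathfrak{P}$. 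This trades the paper's left-fraction bookkeeping for a naturality check, and it makes transparent where the hypotheses enter: $\mathfrak{P}$-completeness is consumed once, in the well-definedness of $\mathbf{r}$ (your reduction of Lemma~\ref{lem:r iota is the identity on Cbar of M} to the ideal $\Ihat$, which is indeed its own $\mathfrak{P}$-completion, is the right move). Two small cautions: your naturality check silently uses that every morphism of $\Chat[\Sigma_\mathfrak{P}^{-1}]$ is a single left fraction, which is Proposition~\ref{localization complete} and should be cited; and your ``cleanest write-up'' via uniqueness of localizations does not quite work as stated, because $(\Chat(\M),\mathbf{r})$ satisfies the universal property only up to natural isomorphism ($F\iota\mathbf{r}$ is isomorphic to $F$, not equal to it), whereas the paper's universal property is strict --- so you should keep the direct construction of $\theta$ as the actual proof and drop or carefully 2-categorify the alternative.
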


\begin{proof}

Let $x$ be an object of $\Chat[\Sigma_\mathfrak{P}^{-1}]$, which is an object of $\C$. Then there exists $y \in \mathcal{M}$ such that $x, y$ belong to the same cell. By definition of $\Sigma_\mathfrak{P}^{-1}$, $\pi(x) \cong \pi (y)$, hence $x \cong \pi \iota(y)$, which proves that $\pi \iota$ is dense.

Let $f: x \to y$ be a morphism in $\Chat$ where $x, y \in \mathcal{M}$. Assume that $\pi(f)=0$. That means that the left fraction $(f, \id_y)$ is equivalent to the left fraction $(0: x \to y, \id_y)$. But this implies that there exists a morphism $u: y \to y'$ in $\Sigma_\mathfrak{P}$ such that $uf = 0$. In this case, we get $\lambda \eta_{y' y}f \in \I$ for $\lambda$ a non-zero scalar. Since $\I$ is $\mathfrak{P}$-complete, that means that $f \in \I$ as well, so $f = 0$.

Now, let us prove that $\pi\iota$ is full. Let $x, y \in \mathcal{M}$ and let $(f,g)$ be a left fraction where $f: x \to y', g: y \to y'$ with $g \in \Sigma_\mathfrak{P}$. We want to prove that there is a morphism $h: x \to y$ in $\Chat(\mathcal{M})$ such that $(f,g) = \pi \iota (h)$. Equivalently, we need to find a morphism $h: x \to y$ in $\C$ such that
the left fractions $(f, g)$ and $(h, \id_y)$ are equivalent in $\Chat$. Since there is a morphism $y \to y'$, we have $y \leq y'$ in some cell in $\oP$. If $x = y$, then it is clear that $f$ factors through $y$. If not, the morphism $f: x \to y'$ factors through any element $z \leq y'$ in that cell. In particular, $f$ factors through $y$.  Hence, there exists a morphism $f': x \to y$ such that $f = g'f'$ where $g': y \to y'$ is in $\Sigma_\mathfrak{P}$. Let $\lambda$ be a non-zero scalar such that $g' = \lambda g$. The diagram
$$\xymatrix{& y' \ar[d]^{\id_{y'}} & \\ x \ar[ur]^f \ar[dr]_{\lambda f'} \ar[r]^{f} & y' & y \ar[ul]_g \ar[dl]^{\id_y} \ar[l]_{g'} \\
& y \ar[u]^{g'} &}$$
shows that the left fractions $(f, g)$ and $(\lambda f', \id_y) = \pi\iota(\lambda f')$ are equivalent, showing that the wanted morphism $h$ is just $\lambda f'$. Hence, $\pi \iota$ is full, which shows that $\pi \iota$ is an equivalence.

Finally, using that $\mathbf{r}\iota = \id$, we see that $\Phi(\pi \iota) = \id$. Since $\pi \iota$ is an equivalence, $\Phi$ is also an equivalence and has to be a quasi-inverse to $\pi \iota$.
\end{proof}

\begin{corollary}\label{cor:equivalence from localized to sample on reps}
    Consider the functors $\pi\iota$ and $\Phi$ in Proposition~\ref{prop:equivalence from localized to sample}.
    Then,
    \begin{enumerate}
        \item $(\pi\iota)^*\circ\Phi^*$ is the identity on $\rpwf\widehat{C}(\M)$ and
        \item $\Phi^*\circ(\pi\iota)^*$ is an auto-equivalence on $\rpwf\widehat{C}[\Sigma^{-1}_{\mathfrak{P}}]$.
    \end{enumerate}
\end{corollary}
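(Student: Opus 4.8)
The plan is to deduce both statements formally from Proposition~\ref{prop:equivalence from localized to sample}, which asserts that $\pi\iota\colon \Chat(\M)\to\Chat[\Sigma_\mathfrak{P}^{-1}]$ is an equivalence with quasi-inverse $\Phi$. The only tool needed is the (contravariant) functoriality of restriction of representations: for a $k$-linear functor $F\colon \mathcal{D}\to\mathcal{E}$ one has $F^*\colon \Rep\,\mathcal{E}\to\Rep\,\mathcal{D}$, $M\mapsto M\circ F$, with $(G\circ F)^*=F^*\circ G^*$ and $\id^*=\id$; moreover a natural isomorphism $\theta\colon F\Rightarrow G$ between functors $\mathcal{D}\to\mathcal{E}$ induces a natural isomorphism $F^*\cong G^*$, its component at $M$ being the whiskering of $M$ with $\theta$, a natural transformation $M\circ F\Rightarrow M\circ G$.

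Before invoking this I would check that $(\pi\iota)^*$ and $\Phi^*$ really do restrict to functors between the $\rpwf$-categories in the statement, and not merely the full $\Rep$-categories. This is immediate from the behaviour of the two functors on objects: on objects $\pi\iota$ is the inclusion $\M\hookrightarrow\C_0$, while $\Phi$ sends an object $x$ to the unique sample point $s_x$ of its cell (since $\Phi\pi=\mathbf{r}$ and $\mathbf{r}(x)=s_x$). Hence $\big((\pi\iota)^*N\big)(x)=N(x)$ for $x\in\M$, and $\big(\Phi^*M\big)(x)=M(s_x)$, so both functors preserve pointwise finite-dimensionality. Granting this, part~(1) is simply the observation that the proof of Proposition~\ref{prop:equivalence from localized to sample} in fact establishes $\Phi\circ(\pi\iota)=\id_{\Chat(\M)}$ strictly (via $\mathbf{r}\iota=\id$, Lemma~\ref{lem:r iota is the identity on Cbar of M}); applying $(-)^*$ then yields $(\pi\iota)^*\circ\Phi^*=(\Phi\circ\pi\iota)^*=\id^*=\id_{\rpwf(\Chat(\M))}$.

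For part~(2), since $\pi\iota$ is an equivalence with quasi-inverse $\Phi$ there is a natural isomorphism $\eta\colon(\pi\iota)\circ\Phi\Rightarrow\id_{\Chat[\Sigma_\mathfrak{P}^{-1}]}$. Whiskering $\eta$ with an arbitrary representation gives a natural isomorphism $\Phi^*\circ(\pi\iota)^*=\big((\pi\iota)\circ\Phi\big)^*\cong\id^*=\id_{\rpwf(\Chat[\Sigma_\mathfrak{P}^{-1}])}$, and any endofunctor naturally isomorphic to the identity is an auto-equivalence; this proves~(2). I do not anticipate a real obstacle here: the whole content is the bookkeeping of variance in $(G\circ F)^*=F^*\circ G^*$ together with the object-level check above that nothing ever leaves $\rpwf$. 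Alternatively, one may just invoke the general principle that an equivalence of essentially small $k$-linear categories induces an equivalence of the associated pointwise finite-dimensional representation categories, applied to the quasi-inverse pair $(\pi\iota,\Phi)$.
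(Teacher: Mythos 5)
Your proof is correct and matches the route the paper intends: the corollary is stated without proof precisely because it follows formally from Proposition~\ref{prop:equivalence from localized to sample} via the contravariant functoriality of $(-)^*$, using $\Phi\circ(\pi\iota)=\id$ strictly for (1) and the natural isomorphism $(\pi\iota)\circ\Phi\cong\id$ for (2). Your additional check that both functors preserve pointwise finite-dimensionality is a worthwhile detail the paper leaves implicit.
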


Proposition~\ref{prop:equivalence from localized to sample} gives us two functors:

$$\ResM:=\iota^*: \rpwf\left(\Chat\right) \to \rpwf\left(\Chat(\M)\right)$$
and\label{res and ind}

$$\IndP:=\mathbf{r}^* = \pi^* \circ \Phi^*: \rpwf\left(\Chat(\M)\right) \to \rpwf\left(\Chat\right)$$

where the first is the \emph{restriction} functor, while the second will be referred to as the \emph{induction} functor.

It is not hard to see that for $M \in \displaystyle\rpwf\left(\Chat(\M)\right)$, we have that $\IndP(M)$ is the representation in $\displaystyle\rpwf\left(\Chat\right)$ with $\IndP(M)(x) = M(s_x)$ where $s_x$ is the unique point of $\M$ with $x, s_x$ in the same cell in $\mathfrak{P}$. If $f:x \to y$ is any morphism, then there is a unique morphism $f': s_x \to s_y$ that is $\mathfrak{P}$-equivalent to $f$ and we have $\IndP(M)(f) = M(f')$. In particular, $\IndP(M)$ is constant on each cell, in the sense that for $x \le y$ in the same cell of $\mathfrak{P}$, we have that $\IndP(M)(\eta_{yx}) = \id$.

\begin{remark}
    Although the functors $\ResM$ and $\IndP$ are just $\iota^*$ and $\mathbf{r}^*$ respectively, it is useful to have these new notation since they clearly indicate their dependence on the sample and partition, respectively.  Note also that $\ResM\circ\IndP$ is equivalent to the identity since $\mathbf{r}\iota = \boldsymbol{1}$. 
\end{remark}

\begin{example}
    Consider Example \ref{running-ex}. We consider the following representation $M$. Consider a two dimensional vector space $W$ with basis $\{w_1, w_2\}$, and let $V = {\rm span}(w_1)$. We define $M$ such that $M(x)$ is $V$ or $W$ for all $x$ in $\C$. Consider the interval $I = (2,4)$ of $\mathcal{P}_\alpha=(0,4)$ and the interval $J = (-\infty, 0]$ of $\mathcal{P}_\beta=\mathbb{R}$. Then $M(x)=W$ if and only if $x \in I \cup J \cup \{b\}$. Otherwise, $M(x) = V$. For $x \le y$ in $\oP$, $M(\eta_{yx})$ is the canonical inclusion.  For $x \le y$ in $\Pbar_{\beta}$, $M(\eta_{yx})$ is the canonical projection. Finally, $M(\gamma)$ is also the canonical projection.

    Note that the corresponding valid partition $\mathfrak{P} = \mathfrak{P}_M$ is
    $$\{a,b,c,d\} \sqcup (0,2] \sqcup (2,4) \sqcup (-\infty, 0] \sqcup (0, +\infty)$$
    with $8$ cells. Now, let us pick sample points as follows:$$\mathcal{M} = \{a,b,c,d\}\cup \{a_1\in (0, 2],\, a_2 \in (2,4),\, b_1 \in (-\infty, 0],\, b_2 \in (0, + \infty)\},$$
    where $a_1,a_2\in \mathcal{P}_\alpha$ and $b_1,b_2\in \mathcal{P}_\beta$.
    Then $\C(\mathcal{M})$ is identified with the path category of the quiver
    $$\xymatrix{a \ar[r] & a_1 \ar[r] & a_2 \ar[r] & b \ar[r] \ar[d] & b_1 \ar[r] & b_2 \ar[r] & c\\
    &&& d}$$
    which is of type $\widetilde{E}_7$. As shown above, $\mathcal{C}(\mathcal{M})$ is equivalent to the localization $\C[\Sigma^{-1}_{\mathfrak{P}}]$.
\end{example}

\section{Second decomposition theorem}\label{sec:2ndDecom}

Let $(Q,\mathcal{P})$ be a thread quiver and $\C$ be the corresponding path category.
Let $\Cbar$ be the quotient of $\C$ by a (weakly admissible) ideal $\I$.

 In this section, we fix a noise free representation $Z$ in $\rpwf \Cbar$, and consider the corresponding valid partition $\mathfrak{P}_Z$ with a corresponding set $\mathcal{M}$ of sample points.
 Notice that, for each cell $X$ of $\mathfrak{P}_Z$ and $x\leq y\in X$, we have $Z(\eta_{yx})$ is an isomorphism and this is also true for any direct summand of $Z$. 

\begin{lemma}\label{lem:refinement and representation of C hat}
    Let $Z \in \rpwf\Cbar$ be quasi noise free (so that $\mathfrak{P}_Z$ is valid). Let $\mathfrak{P}$ be another valid partition and $\Ihat$ be the corresponding $\mathfrak{P}$-completion of $\I$. If $\mathfrak{P}$ refines $\mathfrak{P}_Z$, then $Z$ is a representation in $\displaystyle\rpwf\left(\Chat\right)$.
\end{lemma}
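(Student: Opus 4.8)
The plan is to verify directly that $Z$, viewed as a representation of $\C$, factors through the quotient $\C \to \Chat$, i.e.\ that $Z(f) = 0$ for every $f \in \Ihat$. Since $Z \in \rpwf\Cbar$ we already know $Z(g) = 0$ for all $g \in \I$, so it suffices to control the effect of passing from $\I$ to its $\mathfrak{P}$-completion. The key tool will be Lemma~\ref{comparing I and I hat}: a morphism $f : x \to y$ of $\C$ lies in $\Ihat$ if and only if there are $\sigma_1, \sigma_2 \in \Sigma_\mathfrak{P}$ with $\sigma_2 f \sigma_1 \in \I$.

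First I would take an arbitrary $f \in \Ihat$ and apply Lemma~\ref{comparing I and I hat} to obtain $\sigma_1 : x' \to x$ and $\sigma_2 : y \to y'$ in $\Sigma_\mathfrak{P}$ with $\sigma_2 f \sigma_1 \in \I$; hence $Z(\sigma_2)\,Z(f)\,Z(\sigma_1) = 0$. Now comes the step where the refinement hypothesis enters: because $\mathfrak{P}$ refines $\mathfrak{P}_Z$, each morphism in $\Sigma_\mathfrak{P}$ is a nonzero scalar multiple of some $\eta_{ba}$ with $a,b$ in a common cell of $\mathfrak{P}$, hence in a common cell of $\mathfrak{P}_Z$; by the remark just before the lemma (that $Z(\eta_{yx})$ is an isomorphism whenever $x \le y$ lie in the same cell of $\mathfrak{P}_Z$), it follows that $Z(\sigma_1)$ and $Z(\sigma_2)$ are isomorphisms of vector spaces. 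Cancelling them gives $Z(f) = 0$. Since $f \in \Ihat$ was arbitrary, $Z$ descends to a well-defined $k$-linear functor on $\Chat = \C/\Ihat$, and it is pointwise finite-dimensional because $Z$ is; therefore $Z \in \rpwf(\Chat)$.

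One small bookkeeping point to handle carefully: $\Ihat$ is defined as the ideal \emph{generated by} the $\mathfrak{P}$-equivalents of elements of $\I$, so a priori an element of $\Ihat$ is a sum $\sum_i g_i' f_i' h_i'$ of composites rather than a single such $f$; Lemma~\ref{comparing I and I hat} already packages exactly this reduction (it states the ``if and only if'' for a single morphism $f$), so I would simply invoke it rather than re-prove the generation argument. The main obstacle — really the only nontrivial point — is the observation that elements of $\Sigma_\mathfrak{P}$ are sent to isomorphisms by $Z$, which is where the hypothesis that $\mathfrak{P}$ refines $\mathfrak{P}_Z$ is used and cannot be dropped: without it, a cell of $\mathfrak{P}$ could straddle two cells of $\mathfrak{P}_Z$, and the connecting map need not be invertible under $Z$. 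Everything else is a routine check that a functor vanishing on a defining ideal descends to the quotient.
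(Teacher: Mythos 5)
Your proposal is correct and follows essentially the same route as the paper: both reduce to Lemma~\ref{comparing I and I hat} to express membership in $\Ihat$ via pre- and post-composition with morphisms of $\Sigma_\mathfrak{P}$, and both use the refinement hypothesis to see that $Z$ sends $\Sigma_\mathfrak{P}$ to isomorphisms, which can then be cancelled. Your version is, if anything, marginally more streamlined, since you use the ``$\sigma_2 f \sigma_1 \in \I$'' form of the lemma directly rather than passing through an explicit $\mathfrak{P}$-equivalent element $h \in \I$ as the paper does.
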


\begin{proof}
    Let $f$ be an arbitrary morphism in $\Ihat$. From Lemma \ref{comparing I and I hat}, there exists a morphism $h$ in $\I$ that is $\mathfrak{P}$-equivalent to $f$. We need to show that $Z(f)=0$. Hence, we are given that $Z(h)=0$ and there exist $\sigma_1, \sigma_2, \sigma_3, \sigma_4 \in \Sigma_\mathfrak{P}$ such that $\sigma_2 f \sigma_1 = \sigma_4 h \sigma_3$. This yields $Z(\sigma_2 f \sigma_1)=0$. By the choice of $\mathfrak{P}$, we have that $Z(\sigma_2)$ and $Z(\sigma_1)$ are isomorphisms. Thus, $Z(f)=0$.
\end{proof}

There is no converse to the preceding lemma.
For example, in the hereditary case (Section~\ref{sec:hereditary}) $\I$ is the 0 ideal and so the ideal $\Ihat$ is still zero and the partitions $\mathfrak{P}$ and $\mathfrak{P}_Z$ are unrelated.

Now, we may consider $Z$ as a representation of $\Chat$ and similarly for any direct summand of $Z$.
Thus, to decompose $Z$ we will work over $\Chat$. Before presenting the main theorem of this section, we need the following.

\begin{proposition} \label{PropPiStar}
    The functor $\pi^*: \rpwf\left(\Chat[\Sigma_{\mathfrak{P}}^{-1}]\right) \to \displaystyle\rpwf\left(\Chat\right)$ is fully faithful.
\end{proposition}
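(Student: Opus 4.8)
The statement to prove is that $\pi^* : \rpwf(\Chat[\Sigma_\mathfrak{P}^{-1}]) \to \rpwf(\Chat)$ is fully faithful, where $\pi : \Chat \to \Chat[\Sigma_\mathfrak{P}^{-1}]$ is the localization functor and $\pi^*$ is restriction along $\pi$. The key structural fact I would use is that the localization $\pi$ is, up to equivalence, the same data as the functor $\mathbf{r} : \Chat \to \Chat(\M)$: by Proposition~\ref{prop:equivalence from localized to sample} we have $\Chat[\Sigma_\mathfrak{P}^{-1}] \simeq \Chat(\M)$ with $\pi$ corresponding (after this equivalence) to $\mathbf{r}$, and $\mathbf{r}\iota = \id$ on $\Chat(\M)$. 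So the concrete picture of $\pi^*$ is: a representation $N$ of $\Chat(\M)$ is sent to the representation $\IndP(N)$ of $\Chat$ that is constant on each cell of $\mathfrak{P}$, with $\IndP(N)(x) = N(s_x)$ and $\IndP(N)(\eta_{yx}) = \id$ for $x \le y$ in the same cell.

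\emph{Faithfulness.} Suppose $\phi : N_1 \to N_2$ is a morphism in $\rpwf(\Chat(\M))$ with $\pi^*(\phi) = \IndP(\phi) = 0$. Since $\IndP(\phi)$ restricted to the objects of $\M \subseteq \Chat$ recovers $\phi$ component-wise (the component of $\IndP(\phi)$ at $s_x$ is exactly $\phi_{s_x}$), we get $\phi = 0$. So $\pi^*$ is faithful — in fact this is immediate from $\mathbf{r}\iota = \id$, since $\iota^* \pi^* = (\mathbf r \iota)^* = \id$, and a functor with a left inverse is faithful.

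\emph{Fullness.} This is the substantive part. Let $N_1, N_2 \in \rpwf(\Chat(\M))$ and let $\psi : \pi^*N_1 \to \pi^*N_2$ be a natural transformation in $\rpwf(\Chat)$, i.e. $\psi = (\psi_x : N_1(s_x) \to N_2(s_x))_{x}$ natural with respect to all morphisms of $\Chat$. The candidate preimage is the family $\phi := (\psi_x)_{x \in \M}$. Two things must be checked. First, $\phi$ is a morphism in $\rpwf(\Chat(\M))$: naturality of $\phi$ with respect to morphisms of $\Chat(\M)$ is a special case of naturality of $\psi$. Second, $\pi^*(\phi) = \psi$: for a general object $x$ of $\Chat$, naturality of $\psi$ along the morphism $\eta_{x,s_x}$ or $\eta_{s_x,x}$ (whichever direction makes sense, with $x$ and $s_x$ in the same cell of $\mathfrak{P}$) together with the fact that $\pi^*N_i(\eta) = \id$ on such morphisms forces $\psi_x = \psi_{s_x}$, which is exactly the component of $\IndP(\phi) = \pi^*(\phi)$ at $x$. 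Here I should be careful that $x$ and $s_x$ need not be comparable if the cell is not an interval containing both in a convenient order — but cells are intervals of some $\oP$, and $s_x$ is an element of that interval, so either $x \le s_x$ or $s_x \le x$ in $\oP$, and in either case there is a morphism $\eta$ between them lying in $\Sigma_\mathfrak{P}$, hence sent to an isomorphism (indeed the identity) by $\pi^*N_i$. Thus $\psi_x$ is determined by $\psi_{s_x}$ and equals it. This yields $\pi^*(\phi) = \psi$, completing fullness.

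\emph{Main obstacle.} The only delicate point is the bookkeeping in the fullness argument: making sure that for \emph{every} object $x$ of $\Chat$ (not just those in $\M$) the component $\psi_x$ is pinned down by $\psi_{s_x}$ using naturality along a morphism of $\Sigma_\mathfrak{P}$, and that this is consistent. This is where I would invoke that cells of $\mathfrak{P}$ are intervals of $\oP$ and that $\Sigma_\mathfrak{P}$ consists exactly of the (scalar multiples of) $\eta_{yx}$ within a single cell, so such a connecting morphism always exists and is inverted in the localization. Alternatively — and this is the cleaner route I would actually write up — I would avoid the object-by-object argument entirely: since $\pi$ is a localization, $\pi^*$ identifies $\rpwf(\Chat[\Sigma_\mathfrak{P}^{-1}])$ with the full subcategory of $\rpwf(\Chat)$ consisting of those representations that send every morphism in $\Sigma_\mathfrak{P}$ to an isomorphism; fully faithfulness of the restriction along a localization onto this "local objects" subcategory is a standard general fact about categories of modules over a localization (the unit $F \to \pi^*\pi_! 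F$-type argument, or directly: representations of $\mathcal D[\Sigma^{-1}]$ are the same as $\Sigma$-inverting representations of $\mathcal D$). I would cite the calculus-of-fractions setup (e.g. \cite{K10}) for this and then simply observe that $\pi^*$ lands in, and is essentially surjective onto, that subcategory's defining property is automatically an equivalence onto it — giving full faithfulness for free.
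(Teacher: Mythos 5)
Your proof is correct and follows essentially the same route as the paper: faithfulness from $\iota^*\pi^*$ being an equivalence, and fullness by restricting the given natural transformation to the sample $\mathcal{M}$ and propagating it back to all objects via naturality along the morphisms in $\Sigma_{\mathfrak{P}}$, which the representations in the image of $\pi^*$ send to isomorphisms. The paper packages this slightly differently (it first proves that $\iota^*$ is faithful on the image of $\pi^*$ and then invokes the equivalence $(\pi\iota)^*$ to produce the preimage), but the key naturality argument is identical to yours.
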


\begin{proof}
    Since $\iota^* \pi^*$ is an equivalence, we see that $\pi^*$ is faithful. We claim that $\iota^*$ is faithful on the image of $\pi^*$. Let $M,N$ be representations in $\rpwf \C[\Sigma_{\mathfrak{P}}^{-1}]$ and let $f: \pi^*(M) \to \pi^*(N)$ be such that $\iota^*(f)=0$. For $m \in \mathcal{M}$, we are given that $f_m=0$. Let $x$ be in the same cell as $m$. With no loss of generality, assume that $m < x$. We have that $f_x\pi^*(M)(\eta_{xm}) = \pi^*(N)(\eta_{xm})f_m$. Since $\pi$ inverts morphisms in $\Chat$ and that $\pi^*(M) = M \circ \pi$, we see that $\pi^*(M)(\eta_{xm})$ is an isomorphism. Since $f_m = 0$, the above equation yields $f_x=0$.
    This implies that $f_x=0$ for all $x \in \C$, so that $f=0$. This proves the claim. For proving that $\pi^*$ is full, let $M,N \in \rpwf(\C[\Sigma_\mathfrak{P}^{-1}])$ with a morphism $g: \pi^*(M) \to \pi^*(N)$. Since $(\pi \iota)^*$ is an equivalence, there is a morphism $f: M \to N$ such that $(\pi \iota)^*(f) = \iota^*(g)$ or $\iota^*(\pi^*(f)) = \iota^*(g)$. By faithfullness of $\iota^*$, we get $g = \pi^*(f)$. This finishes the proof  of the proposition.
\end{proof}

Note that an additive functor that is full and faithful preserves indecomposability and decompositions into direct sums. In particular, the functor $\IndP$ has these properties.

\begin{remark} \label{IndRes equals identity}
    Notice that the representation $Z$ in $\rpwf\Cbar$ (as shown above, also in $\displaystyle\rpwf\left(\Chat\right)$) is isomorphic to a representation in the image of $\IndP$. In fact, by the choice of our partition ($\mathfrak{P} = \mathfrak{P}_Z$) we have
    $\IndP \circ \ResM (Z) \cong Z$.
\end{remark}


\begin{theorem}\label{thm:second decomposition theorem}
    Let $Z$ be a noise free representation in $\rpwf\Cbar$, $\mathfrak{P}_Z$ the induced valid partition, and $\mathcal{M}$ a sample of $\mathfrak{P}_Z$.
    Then $Z$ decomposes into a direct sum of indecomposable representations coming from $\displaystyle\rpwf\left(\Chat(\M)\right)$ (i.e.\ all in the image of $\IndP$), unique up to isomorphism.
    Furthermore, if $\mathcal{M}$ is finite then $Z$ is a finite direct sum of indecomposables.
\end{theorem}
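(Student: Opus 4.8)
The plan is to transport the decomposition problem for $Z$ from $\rpwf\Cbar$ to the much smaller category $\rpwf(\Chat(\M))$ via the functors assembled in Figure~\ref{fig:table of functors}, using the fact that $\rpwf(\Chat(\M))$ is the representation category of (the path category of, or a finite-dimensional algebra arising from) a quiver obtained from $Q$ by replacing each arrow with a finite linear $\mathbb{A}_n$ quiver, where Krull--Remak--Schmidt--Azumaya applies. First I would invoke Lemma~\ref{lem:refinement and representation of C hat}: since $\mathfrak{P} = \mathfrak{P}_Z$ trivially refines itself, $Z$ (and each of its summands) may be viewed as a representation of $\Chat$, so we work over $\Chat$ from now on. Next I would apply $\ResM = \iota^*$ to get $\ResM(Z) \in \rpwf(\Chat(\M))$. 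Since each object of $\Chat(\M)$ is pointwise finite-dimensional under $Z$, the representation $\ResM(Z)$ is pointwise finite-dimensional, hence (by the KRSA decomposition recalled after Theorem~\ref{thm:c-b}, applied to representations of $\Chat(\M)$) it decomposes as $\ResM(Z) \cong \bigoplus_i V_i$ into indecomposables with local endomorphism rings, uniquely up to isomorphism and reordering.

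Then I would push this decomposition back up by applying $\IndP = \mathbf{r}^* = \pi^* \circ \Phi^*$. By Remark~\ref{IndRes equals identity}, because the partition is exactly $\mathfrak{P}_Z$ we have $\IndP \circ \ResM(Z) \cong Z$, so
\[
    Z \;\cong\; \IndP\!\left(\bigoplus_i V_i\right) \;\cong\; \bigoplus_i \IndP(V_i),
\]
the last isomorphism because $\IndP$ is additive. Each $\IndP(V_i)$ is a representation coming from $\rpwf(\Chat(\M))$ by construction, so it remains to check these summands are indecomposable and that the decomposition is unique. For indecomposability I would use Proposition~\ref{PropPiStar}: $\pi^*$ is fully faithful, and $\Phi^*$ is (half of) an equivalence by Corollary~\ref{cor:equivalence from localized to sample on reps} together with Proposition~\ref{prop:equivalence from localized to sample}, so $\IndP$ is fully faithful; as noted in the remark following Proposition~\ref{PropPiStar}, a full and faithful additive functor preserves indecomposability. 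Hence each $\IndP(V_i)$ is indecomposable.

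For uniqueness: any decomposition of $Z$ into indecomposables is, by the KRSA theorem (each indecomposable pwf representation of $\Cbar$ has local endomorphism ring, as recalled in the paragraph before Theorem~\ref{thm:noise and noise free decomposition}), unique up to isomorphism and permutation of summands, so there is nothing extra to prove beyond exhibiting one such decomposition — which the display above does. (Alternatively, apply $\ResM$ to any given indecomposable decomposition of $Z$ and use that $\ResM$ is faithful with $\ResM\IndP \cong \mathrm{id}$ to match it against $\bigoplus_i V_i$.) Finally, if $\M$ is finite then $\Chat(\M)$ has finitely many objects and $\ResM(Z)$ is a finite-dimensional representation of a finite-dimensional algebra (using that $Z$ is pwf), hence $\bigoplus_i V_i$ is a finite sum, and therefore so is $Z \cong \bigoplus_i \IndP(V_i)$.

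The main obstacle I anticipate is not any single deep step but making sure the bookkeeping around the functors is airtight: namely that $\ResM(Z)$ genuinely lands in $\rpwf$ (pointwise finite-dimensionality at each sample point, which is immediate since $\M \subseteq \Cbar_0$ and $Z \in \rpwf\Cbar$), that $\IndP$ is fully faithful on the nose (chaining Proposition~\ref{PropPiStar} with the equivalence of Proposition~\ref{prop:equivalence from localized to sample}), and that $\IndP\ResM(Z) \cong Z$ rather than merely $\ResM\IndP \cong \mathrm{id}$ — the former is exactly where the choice $\mathfrak{P} = \mathfrak{P}_Z$ (as opposed to an arbitrary refinement) is essential, and is supplied by Remark~\ref{IndRes equals identity}. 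Everything else is a routine transport-of-structure argument.
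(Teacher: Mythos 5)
Your proposal is correct and follows essentially the same route as the paper's proof: restrict along $\ResM$, decompose by Krull--Remak--Schmidt--Azumaya over $\Chat(\M)$, transport back with $\IndP$ using $\IndP\circ\ResM(Z)\cong Z$, and use full faithfulness of $\IndP$ (via Proposition~\ref{PropPiStar}) to see the induced summands are indecomposable. The only cosmetic difference is that you argue uniqueness via KRSA directly in $\rpwf\Cbar$, whereas the paper invokes it for $\ResM(Z)$; both are fine.
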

\begin{proof}
    Since $\Chat(\mathcal{M})$ is small and that $\ResM(Z)$ is pointwise finite-dimensional, we get that $\ResM(Z)$ is a
    direct sum of indecomposables, each with local endomorphism ring; see \cite[Page 30]{GR97}, \cite[Theorem 1.1]{BCB20}. Moreover, notice that this direct sum decomposition is unique, up to isomorphism, by the Krull--Remak--Schmidt--Azumaya theorem.
    Furthermore, if $\mathcal{M}$ is finite then $\ResM(Z)$ is a finite direct sum since $Z$ is finite-dimensional at each object in $\C$.

    Let $\bigoplus_\lambda \widetilde{Z}_\lambda$ be a direct sum decomposition of $\ResM(Z)$ such that each $\widetilde{Z}_\lambda$ is indecomposable.
    Then we have
    \begin{displaymath}
        Z\cong \IndP \circ \ResM(Z)\cong \IndP\left( \bigoplus_\lambda \widetilde{Z}_\lambda \right) \cong \bigoplus_\lambda \IndP\left(\widetilde{Z}_\lambda\right).
    \end{displaymath}
    Now, note that it follows from Proposition \ref{PropPiStar} that $\IndP$ is fully faithful, hence, preserves (localness of) endomorphism algebras. Therefore, each summand $\IndP(\widetilde{Z}_\lambda)$ has local endomorphism algebras and hence is indecomposable.
\end{proof}

\begin{remark}
    Note that the proof of Theorem \ref{thm:second decomposition theorem} gives us a way to get an explicit decomposition of $Z$ into a direct sum of indecomposables. When $\displaystyle\rpwf\left(\Chat(\M)\right)$ is well-understood (such as when $\Chat(\M)$ is a finite-dimensional algebra whose representations are all classified), then this gives us a way to decompose $Z$. 
\end{remark}

Recall that given a representation $M$ in $\rpwf \C$, or $\rpwf\Cbar$, or $\displaystyle\rpwf\left(\Chat\right)$, which are full subcategories of $\rpwf\C$ that is closed under direct sums and direct summands (Remark~\ref{rem:Cbar}), Theorem \ref{thm:noise and noise free decomposition} gives us a decomposition $M = M_{NF} \oplus M_N$ where $M_{NF}$ is noise free and $M_N$ is noise. In the next corollary, we refine this decomposition using Theorem~\ref{thm:second decomposition theorem}.

\begin{corollary}[to Theorems~\ref{thm:c-b},~\ref{thm:noise and noise free decomposition},~and~\ref{thm:second decomposition theorem}]\label{cor:big decomposition}
    Let $\C$ be the path category of a thread quiver $(Q, \mathcal{P})$, $\I$ be an ideal, and $M$ be a pointwise finite-dimensional representation of $\Cbar$. Then $M$ decomposes uniquely up to isomorphism into a direct sum of indecomposable noise free representations $\{Z_\lambda\}_\lambda$ plus a direct sum of indecomposable noise representations $\{V_{\alpha,\eta}\}_\eta$ for each arrow $\alpha$:
    \begin{displaymath}
        M \cong \left(\bigoplus_\lambda Z_\lambda \right)\oplus\left(\bigoplus_{\alpha\in Q_1} \bigoplus_{\eta} V_{\alpha,\eta} \right),
    \end{displaymath}
    where each indecomposable has local endomorphism ring.
    Moreover, if $Q$ is finite then there are finitely-many $Z_\lambda$ summands.
\end{corollary}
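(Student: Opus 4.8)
The plan is to combine the three decomposition results already established. First I would apply Theorem~\ref{thm:noise and noise free decomposition} to the representation $M$ in $\rpwf\Cbar$, which is legitimate by Remark~\ref{rem:Cbar} since $\rpwf\Cbar$ is a full subcategory of $\rpwf\C$ closed under direct sums and summands. This gives a decomposition $M \cong M_{NF} \oplus M_N$ with $M_{NF}$ noise free and $M_N = \bigoplus_{\alpha \in Q_1} U_\alpha$ where $\supp(U_\alpha) \subseteq \mathcal{P}_\alpha$, and this decomposition is unique up to isomorphism of summands.

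Next I would decompose the noise part. Each $U_\alpha$ is a pointwise finite-dimensional representation of the path category associated to the $\mathbb{A}_2$ quiver with arrow $\alpha$ (equivalently, a confinement $M|_{\oP}$, up to the loop caveat), so Theorem~\ref{thm:c-b} applies and yields a decomposition $U_\alpha \cong \bigoplus_\eta V_{\alpha,\eta}$ into interval representations, unique up to isomorphism. Each interval representation has one-dimensional (hence local) endomorphism algebra. Collecting these over all $\alpha \in Q_1$ gives the noise part of the stated decomposition, with the inner index $\eta$ running over the interval summands supported inside $\mathcal{P}_\alpha$.

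Then I would decompose the noise free part. Apply Theorem~\ref{thm:second decomposition theorem} to $Z := M_{NF}$, with its induced valid partition $\mathfrak{P}_Z$ and a chosen sample $\mathcal{M}$: this gives $Z \cong \bigoplus_\lambda Z_\lambda$ with each $Z_\lambda = \IndP(\widetilde{Z}_\lambda)$ for some indecomposable $\widetilde{Z}_\lambda \in \rpwf(\Chat(\mathcal{M}))$, each $Z_\lambda$ having local endomorphism ring (since $\IndP$ is fully faithful by Proposition~\ref{PropPiStar}, so it transports localness of endomorphism algebras), and this decomposition is unique up to isomorphism. Splicing the two pieces together gives the displayed formula, and every summand has local endomorphism ring. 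For the final sentence, note that if $Q$ is finite then part (3) of Theorem~\ref{thm:noise and noise free decomposition}'s companion (the combined Theorem~A, or directly the statement that there are finitely many noise free indecomposable summands when $Q$ is finite) shows $M_{NF}$ has only finitely many indecomposable summands, i.e.\ finitely many $Z_\lambda$. Uniqueness of the whole decomposition follows because the noise/noise-free splitting is unique (Theorem~\ref{thm:noise and noise free decomposition}) and each piece decomposes uniquely by Krull--Remak--Schmidt--Azumaya, all summands having local endomorphism rings.

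The main obstacle, if any, is bookkeeping rather than mathematical depth: one must be careful that the noise summands $U_\alpha$ genuinely fall under the hypotheses of Theorem~\ref{thm:c-b} (the $\mathbb{A}_2$ case), including the loop exclusion built into the definition of interval representation, and that ``uniqueness'' is understood throughout as up to isomorphism and reindexing of summands, so that pasting the two unique decompositions yields a unique decomposition of $M$. No new estimates or constructions are needed; everything is assembled from Theorems~\ref{thm:c-b}, \ref{thm:noise and noise free decomposition}, and \ref{thm:second decomposition theorem} together with Proposition~\ref{PropPiStar}.
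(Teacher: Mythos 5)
Your proposal is correct and follows essentially the same route as the paper: split off the noise via Theorem~\ref{thm:noise and noise free decomposition}, decompose the noise part arrow-by-arrow with Theorem~\ref{thm:c-b}, decompose the noise free part with Theorem~\ref{thm:second decomposition theorem}, and get uniqueness from Krull--Remak--Schmidt--Azumaya since all summands have local endomorphism rings. Your extra care about the loop exclusion, the role of Proposition~\ref{PropPiStar}, and the finiteness of the sample when $Q$ is finite only makes explicit what the paper's terser proof leaves implicit.
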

\begin{proof}
    By Theorem~\ref{thm:noise and noise free decomposition} we see that $M\cong M_{NF}\oplus M_{N}$.
    The decomposition of $M_{NF}$ into $\bigoplus_\lambda Z_\lambda$ follows from Theorem~\ref{thm:second decomposition theorem}.
    The decomposition of $M_{N}$ into $\bigoplus_{\alpha\in Q_1} \bigoplus_{\eta} V_{\alpha,\eta}$ follows from Theorem~\ref{thm:c-b}.
    By Theorems~\ref{thm:c-b},~\ref{thm:noise and noise free decomposition},~and~\ref{thm:second decomposition theorem}, we have that this decomposition is unique up to isomorphism and each indecomposable has local endomorphism ring.
\end{proof}

Recall quasi noise free representations and the full subcategory $\rqnf \Cbar$ of $\rpwf \Cbar$ whose objects are quasi noise free representations (Definition~\ref{def:quasi noise free}).

\begin{corollary}[to Corollary~\ref{cor:big decomposition}]
    Let $\C$ be the path category of a thread quiver $(Q,\mathcal{P})$ and $\I$ an ideal.
    The class of indecomposable objects in $\rpwf\Cbar$ is the same as the class of indecomposable objects in $\rqnf\Cbar$.
\end{corollary}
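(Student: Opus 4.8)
The plan is to prove the equality of the two classes by showing the two inclusions separately, the trivial one being that any object of $\rqnf\Cbar$ which is indecomposable there is indecomposable in $\rpwf\Cbar$. This follows at once because $\rqnf\Cbar$ is by construction a \emph{full} subcategory of $\rpwf\Cbar$: a nontrivial decomposition $M \cong A \oplus B$ with $A, B \neq 0$ in $\rpwf\Cbar$ would, since all the relevant $\Hom$-spaces and idempotents coincide, also be a decomposition inside $\rqnf\Cbar$ (one checks, using the uniqueness in Theorem~\ref{thm:noise and noise free decomposition}, that the noise part of $M$ is the sum of the noise parts of $A$ and $B$, so $\rqnf\Cbar$ is even closed under direct summands). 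Hence an indecomposable of $\rqnf\Cbar$ cannot become decomposable when viewed in $\rpwf\Cbar$.

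For the reverse and substantive inclusion, I would take an indecomposable $M \in \rpwf\Cbar$ and apply Corollary~\ref{cor:big decomposition}, which writes $M$ as a direct sum of indecomposable noise free summands $Z_\lambda$ and indecomposable noise summands $V_{\alpha,\eta}$, each with local endomorphism ring, uniquely up to isomorphism. Since $M$ is indecomposable, exactly one summand occurs: either $M \cong Z_\lambda$ for a single index, or $M \cong V_{\alpha,\eta}$ for a single pair, with $\supp(M) \subseteq \mathcal{P}_\alpha$ in the latter case. In the first case $M$ is noise free, so in the notation of Theorem~\ref{thm:noise and noise free decomposition} every $U_\beta = 0$, hence has (trivially) finitely many summands and $M$ is quasi noise free. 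In the second case $U_\alpha = M$ is indecomposable, so it has exactly one summand, while $U_\beta = 0$ for $\beta \neq \alpha$; again $M$ is quasi noise free. Either way $M \in \rqnf\Cbar$, and by fullness $M$ is indecomposable there too.

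I do not anticipate any genuine obstacle here: the statement is essentially a bookkeeping consequence of Corollary~\ref{cor:big decomposition} together with the elementary observation that a single noise (equivalently, interval) summand is automatically quasi noise free. The only point deserving a sentence of care is the transfer of indecomposability between $\rpwf\Cbar$ and the full subcategory $\rqnf\Cbar$, which, as noted above, is immediate from fullness (and, if one wishes, from closure of $\rqnf\Cbar$ under direct summands).
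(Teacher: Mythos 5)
Your proof is correct and is exactly the argument the paper intends (the corollary is stated there without an explicit proof): an indecomposable in $\rpwf\Cbar$ is, by Corollary~\ref{cor:big decomposition}, either noise free or a single indecomposable noise summand, and in either case trivially quasi noise free, while fullness of $\rqnf\Cbar$ and its closure under summands handle the converse inclusion. Nothing is missing.
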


\section{Projective and injective objects}\label{sec:proj-inj}

In this section, we consider a thread quiver $(Q, \mathcal{P})$ with the corresponding path category $\C$, together with a weakly admissible ideal $\I$ of $\C$ (Definition~\ref{weakly admissible}), and we consider the quotient $\Cbar:=\C/\I$.
We construct a class of indecomposable projective and injective representations in $\rpwf \Cbar$. More specifically, we construct a projective representation $P_J$ and an injective representation $I_J$ in $\rpwf \Cbar$ for each interval $J$ of a given $\oP$ for $\alpha \in Q_1$. These representations are indecomposable or zero. With an additional assumption on $\Cbar$, we prove in Proposition \ref{ThmAllProj} that these yield all indecomposable projective and injective representations.

\medskip

 It is clear that for any object $z \in \Cbar$, the representable functor $\Hom(z,-)$ is projective indecomposable in $\rpwf \Cbar$. Indeed, it is projective in $\Rep\ \Cbar$, and the definition of $\I$ being weakly admissible (Definition~\ref{weakly admissible}) guarantees that this representation belongs to $\rpwf \Cbar$ and is indecomposable. 
 
 \medskip

Recall $\mathcal{P}_\alpha$ is the poset assigned to the arrow $\alpha$ and that $\oP$ is $\mathcal{P}_\alpha\cup \{s(\alpha),t(\alpha)\}$ where $s(\alpha)=\min \oP$ and $t(\alpha) = \max \oP$ are considered distinct even if $\alpha$ is a loop.

Now, let us fix an interval $J$ in some poset $\oP$ for $\alpha \in Q_1$.
We define a representation $P_J$ as follows.
We consider $J^{\rm op}$ to be the linearly ordered set obtained by reversing the order on $J$.
Let $y$ be any object of $\Cbar$.
We define $P_J(y) = \varinjlim_{J^{\rm op}}\Hom(j,y)$ where the direct limit is indexed by the directed set $J^{\rm op}$ and the morphisms are $\Hom(\eta_{j_2, j_1}, y): \Hom(j_2,y) \to \Hom(j_1, y)$ for $j_1 \le j_2$ in $J$ (or $j_2 \le j_1$ in $J^{\rm op}$).
Observe that the limit always exists given that $\Cbar$ is Hom-finite, since the dimensions of the vector spaces in the limit are all bounded above by the (finite) dimension of any $\Hom(j,y)$ where $j \in J$ and $j \le y$ when $y \in J$. 
By the universal property of a direct limit, we can define $P_J$ on morphisms of $\Cbar$. We note that as a functor, $P_J$ coincides with the direct limit of the representable projectives $\Hom(j,-)$ for $j \in J$. Now, we get that for $M \in \rpwf \Cbar$,
\begin{eqnarray*}
\Hom(P_J, M) & \cong & \Hom(\varinjlim_{J^{\rm op}} P_j, M)\\
& \cong & \varprojlim_{J^{\rm op}}\Hom(P_j, M)\\
& \cong & \varprojlim_{J^{\rm op}}M(j)
\end{eqnarray*}
which are all natural in $M$.
Note also that if $J$ has a minimal element $x$, then $P_J = \Hom(x,-)$ is just the representable projective at $x$ and $\varprojlim_{J^{\rm op}}M(j) = M(x)$ as expected.

Observe that we have a pointwise dual functor  $D = \Hom_k(-, k): \rpwf\Cbar \to \rpwf (\Cbar^{\rm op})$. Therefore, we can use this to define the representations $I_J$ for $J$ some interval in a given $\oP$. In other words, we consider the representation $P_{J^{\rm op}}$ of the opposite category $\Cbar^{\rm op}$ and let $I_J:=D(P_{J^{\rm op}})$, which is a representation in $\rpwf\Cbar$.

\begin{definition}
    Let $J,J'$ be two intervals in some $\oP$.  We say that
    \begin{enumerate}
        \item The intervals $J$ and $J'$ \emph{have the same start} if there is $x \in J \cap J'$ such that $J \cap [s(\alpha), x] = J' \cap [s(\alpha), x]$.
        \item The intervals $J$ and $J'$ \emph{have the same end} if there is $x \in J \cap J'$ such that $J \cap [x, t(\alpha)] = J' \cap [x, t(\alpha)]$.
    \end{enumerate}
\end{definition}

We observe that the relations ``have the same start'' or ``have the same end'' are  equivalence relations for the intervals in a given $\oP$. 

\begin{remark}
We remark that $P_J$ could be the zero representation. This happens if and only if for every $y \in J$ there is some $x \le y$ in $J$ with $\eta_{yx} \in \I$. This means that when $P_J$ is non-zero, there is some $y \in J$ such that for all $x \le y$ in $J$, we have $\eta_{yx} \not\in \I$. Hence, in this case, there is an interval $J'$ having the same start as $J$ such that the interval representation $M_{J'}$ is a non-zero quotient of $P_J$ in $\rpwf \Cbar$.
\end{remark}

\begin{proposition} \label{IndecProj}
 Let $J$ be an interval in some $\oP$. The representations $P_J$ and $I_J$ are projective and injective representations in $\rpwf \Cbar$, respectively, and both are indecomposable with local endomorphism ring, whenever they are non-zero.
\end{proposition}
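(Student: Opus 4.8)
The plan is to read off projectivity and injectivity from the limit descriptions recorded just before the statement, and then to reduce the ``indecomposable with local endomorphism ring'' part to the assertion that $P_J$ is a projective cover of an interval representation. First, $P_J$ is projective: it is the filtered colimit $\varinjlim_{J^{\rm op}}\Hom_{\Cbar}(j,-)$ of representable (hence projective) functors, so it suffices to prove that $\Hom(P_J,M)\cong\varprojlim_{J^{\rm op}}M(j)$ is exact in $M$. Inverse limits are left exact, so for a short exact sequence $0\to K\to M\to M''\to0$ in $\rpwf\Cbar$ one needs only $\varprojlim_{J^{\rm op}}M(j)\to\varprojlim_{J^{\rm op}}M''(j)$ to be surjective, i.e.\ $\varprojlim^{1}$ of the system $\{K(j)\}_{j\in J^{\rm op}}$ to vanish. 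Each $K(j)$ is finite-dimensional, so this system is Mittag--Leffler (for fixed $j$ the images of the transition maps into $K(j)$ form a descending chain of subspaces of the finite-dimensional space $K(j)$, hence stabilise), whence $\varprojlim^{1}=0$ over the totally ordered index set $J^{\rm op}$ by the standard argument. Thus $P_J$ is projective (vacuously when $P_J=0$). For $I_J$ one invokes the exact contravariant equivalence $D=\Hom_{k}(-,k)\colon\rpwf\Cbar\xrightarrow{\sim}\rpwf(\Cbar^{\rm op})$: the above shows $P_{J^{\rm op}}$ is projective in $\rpwf(\Cbar^{\rm op})$, so $I_J=D(P_{J^{\rm op}})$ is injective in $\rpwf\Cbar$.

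Next I would prove $\End(P_J)$ is finite-dimensional. By the computation before the statement $\End(P_J)\cong\varprojlim_{J^{\rm op}}P_J(j)$, so it is enough to bound $\dim_{k}P_J(j)$ uniformly for $j\in J$. For $i,j$ in a common $\oP$, a basis of $\Hom_{\C}(i,j)$ consists of at most the interval element $\eta_{j,i}$ together with the elements $\eta_{j,s(\alpha)}\,p\,\eta_{t(\alpha),i}$ indexed by the $Q$-paths $p\colon t(\alpha)\to s(\alpha)$, only finitely many of which survive in $\Cbar$ by weak admissibility; hence $\dim_{k}\Hom_{\Cbar}(i,j)\le N$ for a constant $N$ depending only on $(Q,\mathcal P)$ and $\I$, and a filtered colimit of vector spaces of dimension $\le N$ again has dimension $\le N$. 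So $\End(P_J)$, and symmetrically $\End(I_J)$, is finite-dimensional. Since $k$ is algebraically closed, such an algebra is local precisely when it has no idempotent other than $0$ and $1$, i.e.\ precisely when the representation is indecomposable; so it remains to prove $P_J$ indecomposable when $P_J\ne0$, and then deduce the statement for $I_J$ by applying $D$.

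So assume $P_J\ne0$. By the Remark preceding the statement there is an epimorphism $q\colon P_J\twoheadrightarrow M_{J'}$ onto an interval representation with $J'$ having the same start as $J$; here $M_{J'}$ is indecomposable with $\End(M_{J'})=k$, and every subrepresentation of $M_{J'}$ is an interval representation supported on a final segment of $J'$, these final segments being totally ordered by inclusion. The key claim is that $q$ is a projective cover, i.e.\ that $\ker q$ is superfluous in $P_J$. Granting this, suppose $P_J=A\oplus B$; then $q(A)+q(B)=M_{J'}$, and as $q(A),q(B)$ are supported on final segments of $J'$, one of them, say $q(A)$, equals $M_{J'}$, so $A+\ker q=P_J$; superfluousness of $\ker q$ then forces $A=P_J$ and $B=0$. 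Hence $P_J$ is indecomposable and $\End(P_J)$ is local, as required.

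The hard part will be the superfluousness of $\ker q$, equivalently that no proper subrepresentation of $P_J$ maps onto $M_{J'}$ under $q$. I would prove it from the explicit $\Hom$-spaces of $\C$: the natural transformation $\Hom_{\Cbar}(i,-)\to M_{J'}$ determined by $q$ is nonzero exactly for $i$ in the initial segment on which $J$ and $J'$ coincide, and $q$ annihilates every cyclic path-like element $\eta_{y,s(\alpha)}\,p\,\eta_{t(\alpha),x}$ with $p$ a nontrivial $Q$-path, since $M_{J'}(p)=0$; consequently $\ker q$ is precisely the subrepresentation spanned by the classes of such cyclic elements, whereas $P_J$ is generated by the canonical elements $\bar e_{j}\in P_J(j)$ arising from the identity morphisms $e_{j}$, $j\in J$. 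A subrepresentation $X$ with $q(X)=M_{J'}$ must then contain, for each $j$ near $\inf J$, an element with nonzero $\bar e_{j}$-component; propagating this along the transition morphisms $\eta_{j',j}$ and the cyclic morphisms of $\Cbar$ — and using condition (i) of weak admissibility, which forces the oriented cycles of $Q$ to contribute only non-isomorphisms — should force $X$ to contain every $\bar e_{j}$, hence $X=P_J$. Carrying out this last propagation precisely, rather than any of the formal reductions above, is the crux.
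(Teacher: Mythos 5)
Your projectivity argument is sound in outline and is a genuine alternative to the paper's: the paper does not invoke $\varprojlim^1$ at all, but instead decomposes the confinement of each $M_i$ to $\oP$ into interval representations, isolates the finitely many summands whose intervals reach the start of $J$, and shows the inverse system $\{M_i(j)\}$ stabilizes at a single stage $x$, so that $\Hom(P_J,-)$ agrees with $\Hom(\Hom(x,-),-)$ on the given epimorphism. Two cautions on your version: ``the standard argument'' that Mittag--Leffler forces $\varprojlim^1=0$ is usually proved for countable systems, whereas $J^{\rm op}$ may have uncountable cofinality, so you should either invoke linear compactness of finite-dimensional vector spaces (equivalently, vanishing of derived limits for systems of artinian modules) or simply prove stabilization as the paper does; and your uniform bound $\dim_k\Hom_{\Cbar}(i,j)\le N$ is true but not for the reason you give --- the correct justification is that all the cyclic basis elements lie in the image of the linear map $\Hom_{\Cbar}(t(\alpha),s(\alpha))\to\Hom_{\Cbar}(i,j)$, $f\mapsto \eta_{j,s(\alpha)}f\eta_{t(\alpha),i}$, so one may take $N=1+\dim_k\Hom_{\Cbar}(t(\alpha),s(\alpha))$, which is finite by weak admissibility.

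The genuine gap is indecomposability. You reduce it to the claim that $\ker q$ is superfluous in $P_J$ (i.e.\ that $q\colon P_J\to M_{J'}$ is a projective cover) and then explicitly do not prove that claim, calling it ``the crux.'' That superfluousness is precisely the hard content of the proposition: your sketched ``propagation,'' which must separate the canonical generator $\bar e_j$ from the cyclic contributions inside an element of a subrepresentation $X$, is exactly where the locality condition (i) of weak admissibility has to be used, and as written it is an intention rather than an argument. (Your description of $\ker q$ is also incomplete: besides the spans of cyclic elements at objects of $J'$, it contains all of $P_J(y)$ for $y\notin J'$, including canonical elements there.) The paper takes a different and complete route: it shrinks $J$ so that $M_J$ is a quotient of $P_J$ with kernel $Z$ a quotient of a representable $P_x$, proves $\End(Z)$ is local by lifting endomorphisms to $P_x$, shows that a nontrivial decomposition $P_J=P_1\oplus P_2$ would force $P_x$ to be a direct summand of $P_J$, and derives a contradiction with Krull--Remak--Schmidt--Azumaya and pointwise finiteness by iterating over shorter intervals; locality of $\End(P_J)$ is then quoted from the general fact that pointwise finite-dimensional indecomposables have local endomorphism rings. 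Until you actually establish the superfluousness of $\ker q$, your proposal does not prove the central assertion of the proposition.
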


\begin{proof}
We only prove that $P_J$ is projective, as the proof that $I_J$ is injective follows from the duality. 

Let $f:M_1 \to M_2$ be an epimorphism of pointwise finite-dimensional representations. Recall that $J$ is some interval in $\oP$. Consider the subcategory $\C_\alpha$ having as objects the objects in $\oP$ and with morphisms given by the scalar multiples of the $\eta_{yx}$ for $x \le y$ in $\oP$. Let $(M_i)_\alpha$ be the restriction of $M_i$ to this subcategory. In other words, $(M_i)_\alpha$ is the confinement of $M$ to $\oP$. This gives rise to $(M_i)_\alpha = M_{i1} \oplus M_{i2}$ where $M_{i1}$ is the direct sum of all direct summands $Z$ of $(M_i)_\alpha$ where $Z$ is an interval representation $Z = M_{I}$ where $I$ and $J$ intersect and $I$ either contains a lower bound for $J$ or start in the same way. Since $M$ is pointwise finite, the representation $M_{i1}$ is a finite direct sum of interval representations. Hence, there exists $x' \le x \in J$ where $x$ is contained in the support of all of the direct summands of $M_{i1}$ for $i=1,2$ and $M_{i2}(\eta_{xx'})=0$ for $i=1,2$. Restricting $M_{i1}$ further to $J$ gives a representation where all the $\eta_{zy}$ for $y \le z \le x$ in $J$ act as isomorphisms. We can use this to directly check that $\Hom(P_J, M_i) \cong M_{i1}(x)$ for $i=1,2$. 
Hence, $\Hom(P_J, -)$ and $\Hom(\Hom(x,-), -)$ agree on the $M_i$ and morphisms between them. This yields that $\Hom(P_J, f)$ is onto.

For the indecomposability of $P_J$, assume that $P_J$ is non-zero. Recall that only the start of $J$ matters for the definition of $P_J$.  We may assume that $J$ is not a single point, since in that case, $P_J$ is representable projective. It follows from the previous remark that we may assume that $J$ is such that $M_{J}$ is a non-zero quotient of $P_J$. We now replace $J$ by an interval having the same start but which will have better properties. Let $J'$ be an interval in $\oP$ having at least two elements and the same start as $J$ and a maximum $x$. We consider the interval $J'' = J' \backslash \{x\}$, which also has the same start as $J$, and is non-empty. Since $J, J''$ have the same start, $P_J = P_{J''}$. We therefore replace $J$ by $J''$ but still denote it $J$. 

In this case, we have a morphism $f: P_x \to P_J$ whose cokernel is $M_J$.
Note that it is possible $P_x\to P_J$ is the zero map.
In this case, $M_J\cong P_J$ and so $P_J$ is indecomposable.
Thus, suppose $M_J\not\cong P_J$.
Hence, we have a short exact sequence 
$$0 \to Z \stackrel{u}{\to} P_J \stackrel{v}{\to} M_J \to 0$$
where there is an epimorphism $P_x \to Z$. Let $\varphi: Z \to Z$ be a morphism and $\psi: P_x \to P_x$ be a corresponding lift. We know that $P_x$ has finite-dimensional local endomorphism algebra. It is not hard to check that if $\psi$ is an isomorphism, then $\varphi$ is surjective and hence an isomorphism, and that if $\psi$ is nilpotent, then so is $\varphi$. The latter implies that $Z$ has local endomorphism ring.

Assume that there is a non-trivial decomposition $P_J = P_1 \oplus P_2$ so that with respect to this decomposition, we have $u = (u_1, u_2)^T$ and $v = (v_1, v_2)$.
Since $\Hom(P_x, M_J)= M_J(x)=0$, it follows that $\Hom(Z, M_J)=0$.
Hence $v_1u_1 = v_2u_2=0$.
Since the morphism $(u_1, 0)^T:Z \to P_1 \oplus P_2$ compose to zero with $v$, it follows that there exists an endomorphism $w: Z \to Z$ such that $(u_1, 0)^T = (u_1w, u_2w)^T$.
Since $Z$ has local endomorphism ring (which is finite-dimensional), either $w$ is an isomorphism or is nilpotent.
In the first case, we get $u_2=0$ and in the second case, we get $u_1=0$. With no loss of generality, assume that $u_2=0$.
Similarly, since $M_J$ also has local endomorphism ring, we also get that either $v_1=0$ or $v_2 = 0$.
Clearly, the only possibility is that $v_1=0$.
In this case, $Z \cong P_1 \cong P_x$ and $M_J \cong P_2$.
This implies that $P_x$ is a direct summand of $P_J$.
But we may repeat this argument by making the interval $J$ shorter and prove that for any $y$ in $J$, $P_y$ is a direct summand of $P_J$, which is impossible by the Krull--Remak--Schmidt--Azumaya theorem and since $P_J$ is pointwise finite-dimensional.
This proves that $P_J$ is indecomposable.
Recall that any pointwise finite-dimensional indecomposable representation has local endomorphism algebra; see for instance \cite[Page 32, Remark 5]{GR97}.
\end{proof}

\begin{remark}
    We note that if $J$ does not have a minimal element, although $P_J$ is projective in $\rpwf \Cbar$, it is not projective in $\Rep\ \Cbar$ when it is non-zero. Indeed, we have an epimorphism
    \[\bigoplus_{j \in J}P_j \to P_J.\]
    If $P_J$ were projective, then we would get that $P_J$ is a direct summand of $\oplus_{j \in J}P_j$, where the latter is a direct sum of representations with local endomorphism rings. Since $P_J$ is indecomposable, it follows from Theorem 6 in \cite{Warfield} that $P_J$ has to be isomorphic to one $P_j$, which is impossible.
    See also, \cite[Proposition 2.5.3]{IRT22}.
\end{remark}

\begin{proposition}
Consider two intervals $J, J'$ in some $\oP$ such that $P_J$ and $P_{J'}$ are non-zero. The projective representations $P_J, P_{J'}$ (resp. injective representations $I_J, I_{J'}$) are isomorphic if and only if $J,J'$ have the same start (resp. have the same end).
\end{proposition}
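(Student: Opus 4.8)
I will prove the two directions of the projective statement separately and then deduce the injective statement by duality. Throughout, fix the arrow $\alpha$, write $\oP$ for $\overline{\mathcal P}_\alpha$, and recall from the discussion preceding Proposition~\ref{IndecProj} that $P_J(y)=\varinjlim_{J^{\rm op}}\Hom_{\Cbar}(j,y)$ and that for $M\in\rpwf\Cbar$ one has $\Hom_{\Cbar}(P_J,M)\cong\varprojlim_{J^{\rm op}}M(j)$, the latter being the limit of the $J$-indexed diagram $\big(j\mapsto M(j),\ M(\eta_{j'j})\colon M(j)\to M(j') \text{ for } j\le j'\big)$.

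\smallskip

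\emph{Same start $\Rightarrow P_J\cong P_{J'}$.} The point is that $P_J$ depends only on a coinitial portion of $J$. Suppose $x_0\in J\cap J'$ witnesses that $J,J'$ have the same start, so $C:=J\cap[s(\alpha),x_0]=J'\cap[s(\alpha),x_0]$; then $C\ni x_0$ is nonempty and coinitial in both $J$ and $J'$ (if $j\in J$ and $j\le x_0$ then $j\in C$, while if $j>x_0$ then $x_0\in C$ lies below $j$). Equivalently, $C^{\rm op}$ is a final subdiagram of $J^{\rm op}$ and of $J'^{\rm op}$, so by finality of colimits (the colimits exist because $\I$ weakly admissible makes $\Cbar$ Hom-finite) we get natural isomorphisms $P_J(y)=\varinjlim_{J^{\rm op}}\Hom(j,y)\cong\varinjlim_{C^{\rm op}}\Hom(j,y)\cong\varinjlim_{J'^{\rm op}}\Hom(j,y)=P_{J'}(y)$, hence $P_J\cong P_{J'}$.

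\smallskip

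\emph{$P_J\cong P_{J'}\Rightarrow$ same start.} The idea is to probe $P_J$ by mapping into interval representations. There are two technical points. First, $M_I$ is a representation of $\Cbar$ (not merely of $\C$) only when no $\eta_{yx}$ with $x\le y$ in $I$ lies in $\I$. To arrange this, set $\hat J:=\{y\in J : \eta_{yx}\notin\I \text{ for all } x\le y \text{ in }J\}$. As in the remark preceding Proposition~\ref{IndecProj}, the ``bad'' elements of $J$ form an up-set, so $\hat J$ is an initial segment of $J$; since $P_J\neq0$, $\hat J\neq\emptyset$; and picking $x\in\hat J$ gives $J\cap[s(\alpha),x]=\hat J\cap[s(\alpha),x]$, so $\hat J$ has the same start as $J$ and $P_J=P_{\hat J}$. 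Moreover $M_{\hat J}\in\rpwf\Cbar$. Do the same for $J'$. Second, a direct computation of $\varprojlim_{\hat J^{\rm op}}M_{\hat{J'}}(j)$ shows: a compatible family $(m_j)_{j\in\hat J}$ vanishes outside $\hat J\cap\hat{J'}$, is constant on the interval $\hat J\cap\hat{J'}$, and any element of $\hat J\setminus\hat{J'}$ lying below an element of $\hat J\cap\hat{J'}$ forces that constant to be $0$. Hence $\Hom(P_{\hat J},M_{\hat{J'}})$ is one-dimensional if $\hat J\cap\hat{J'}$ is a nonempty initial segment of $\hat J$, and is $0$ otherwise; in particular $\Hom(P_{\hat J},M_{\hat J})\cong k$. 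Now from $P_{\hat J}=P_J\cong P_{J'}=P_{\hat{J'}}$ we obtain $\Hom(P_{\hat J},M_{\hat{J'}})\cong\Hom(P_{\hat{J'}},M_{\hat{J'}})\cong k\neq0$, so $\hat J\cap\hat{J'}$ is a nonempty initial segment of $\hat J$; symmetrically it is a nonempty initial segment of $\hat{J'}$. Finally, if $K$ is a nonempty initial segment of both $\hat J$ and $\hat{J'}$, then for any $x_0\in K$ one has $\hat J\cap[s(\alpha),x_0]\subseteq K\subseteq\hat{J'}$ and symmetrically, whence $\hat J\cap[s(\alpha),x_0]=\hat{J'}\cap[s(\alpha),x_0]$ with $x_0\in\hat J\cap\hat{J'}$; thus $\hat J$ and $\hat{J'}$ have the same start, and therefore so do $J$ and $J'$. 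The one degenerate case where $M_{\hat J}$ is not permitted ($\alpha$ a loop and $\hat J=\oP$) is handled directly: then $s(\alpha)=\min\hat J$, so $P_{\hat J}=\Hom(s(\alpha),-)$, and probing instead with the singleton $M_{\{s(\alpha)\}}$ (always a representation of $\Cbar$) gives $\Hom(P_{\hat{J'}},M_{\{s(\alpha)\}})\cong k$, forcing $s(\alpha)\in\hat{J'}$, i.e. $s(\alpha)=\min\hat{J'}$ and the starts agree.

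\smallskip

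\emph{Injective case.} Recall $I_J=D(P_{J^{\rm op}})$, where $P_{J^{\rm op}}$ is a representation of $\Cbar^{\rm op}$, and $\Cbar^{\rm op}$ is the path category of the opposite thread quiver modulo the (still weakly admissible) opposite ideal, with $\oP^{\rm op}$ in the role of $\oP$. Since $J,J'$ have the same end in $\oP$ if and only if $J^{\rm op},J'^{\rm op}$ have the same start in $\oP^{\rm op}$, and $D$ is a contravariant equivalence, we get $I_J\cong I_{J'}\iff P_{J^{\rm op}}\cong P_{J'^{\rm op}}$, and the projective case applied to $\Cbar^{\rm op}$ finishes the proof.

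\smallskip

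The main obstacle is the bookkeeping around $\I$: one must ensure the test objects really lie in $\rpwf\Cbar$ (resolved by passing from $J$ to $\hat J$, using $P_J\neq 0$), and one must isolate the elementary order-theoretic fact that ``having a common nonempty initial segment'' is the same as ``having the same start''. Everything else is formal.
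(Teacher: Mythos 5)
Your proof is correct and follows essentially the same route as the paper: sufficiency via cofinality of the direct limit, and necessity by testing the isomorphism against interval representations $M_R$ with $R$ having the same start as $J$ (your $\hat J$ makes explicit the interval the paper's preceding remark produces from $P_J\neq 0$). The only difference is presentational — you compute $\Hom(P_{\hat J},M_{\hat J'})\cong\varprojlim M_{\hat J'}(j)$ exactly and handle the loop/ideal edge cases explicitly, where the paper argues by shrinking $R$ — so no gap to report.
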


\begin{proof}
We only prove the projective case, the other being dual. The sufficiency is clear and follows from the properties of a direct limit. Now, suppose that $P_J$ is isomorphic to $P_{J'}$. Consider an interval $R$ that has the same start as $J$ so that we have a non-zero epimorphism $P_J \to M_R$. The isomorphism $P_{J'} \to P_J$ composed with this non-zero epimorphism yields that $J', R$ intersect. In fact, either they have the same start or $R$ contains a lower bound for $J'$. But since $R$ can be taken arbitrarily small having the same start as $J$, this means that $J', R$ have the same start.
\end{proof}

Next, we would like to describe all indecomposable projective representations and all indecomposable injective representations. To do that, we require the following.

\begin{definition}\label{def:Q bounded}
We call the category $\Cbar$ 
\begin{enumerate}
    \item  \emph{left $Q$-bounded} if, for each $x \in Q_0$, there are at most finitely many $y \in Q_0$ with $\Hom(y,x)$ non-zero. 
    \item \emph{right $Q$-bounded} if, for each $x \in Q_0$, there are at most finitely many $y \in Q_0$ with $\Hom(x,y)$ non-zero. 
    \item \emph{$Q$-bounded} if it is both left and right $Q$-bounded.
\end{enumerate}
    
\end{definition}

\begin{remark}
    When $Q$ is locally finite, the Gabriel--Roiter definition of admissibility for $kQ$ (see \cite[pg. 81]{GR97}) is equivalent to $kQ$ being a bounded $k$-category.

    When $\I=0$ and $\C=\Cbar$ is left (or right) $Q$-bounded, this is equivalent to the condition that $Q$ is left (or right) rooted with a finite ordinal (see, for example, \cite{CIT13}).
\end{remark}

\begin{theorem} \label{ThmAllProj} We have the following.
\begin{enumerate}
    \item Let $\Cbar$ be left $Q$-bounded. If $P \in \rpwf \Cbar$ is projective indecomposable, then there exists an arrow $\alpha$, and interval $J$ in $\oP$ such that $P$ is isomorphic to $P_J$.
    \item Let $\Cbar$ be right $Q$-bounded. If $I \in \rpwf \Cbar$ is injective indecomposable, then there exists an arrow $\alpha$, and interval $J$ in $\oP$ such that $I$ is isomorphic to $I_J$.
\end{enumerate}

\end{theorem}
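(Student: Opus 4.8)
The plan is to realize $P$ as a direct summand of an explicit, pointwise finite‑dimensional direct sum of representations each of which is already of the form $P_J$ (recall that $\Hom_{\Cbar}(z,-)=P_{\{z\}}$), and then to invoke the Azumaya--Warfield theorem to conclude $P\cong P_J$. I would prove (1) and deduce (2) by applying the pointwise dual $D=\Hom_k(-,k)\colon \rpwf\Cbar\to\rpwf(\Cbar^{\mathrm{op}})$: the category $\Cbar^{\mathrm{op}}$ is again the path category of a thread quiver with a weakly admissible ideal, it is left $Q$-bounded precisely when $\Cbar$ is right $Q$-bounded, and $D$ exchanges projectives with injectives and $P_J$ with $I_J$ — which is exactly how the $I_J$ were defined.

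So let $P\in\rpwf\Cbar$ be projective indecomposable with $P\neq 0$. By Theorem~\ref{thm:noise and noise free decomposition} it is either noise or noise free. If $P$ is noise, then $\supp(P)\subseteq\mathcal{P}_\alpha$ for a single arrow $\alpha$, so by Theorem~\ref{thm:c-b} applied to the confinement $P|_{\oP}$ the representation $P$ is an interval representation $M_I$ with $I\subseteq\mathcal{P}_\alpha$ (no band summand occurs, since a band on a loop has its base vertex in its support, contradicting noise). Taking $J$ to be an interval in $\oP$ with the same start as $I$, the computation $\Hom(P_J,M)\cong\varprojlim_{J^{\mathrm{op}}}M(j)$ from the proof of Proposition~\ref{IndecProj}, applied to $M=P$, yields a morphism $P_J\to P$ which is onto (its image at each $y\in\supp P$ contains the generator of $M_I$ at $y$); since $P$ is projective this epimorphism splits, so $P$ is a summand of the indecomposable $P_J$, whence $P\cong P_J$.

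The main case is $P$ noise free, so $\supp(P)\cap Q_0\neq\emptyset$. For each $v\in\supp(P)\cap Q_0$ a basis of $P(v)$ gives, by Yoneda, a morphism $\Hom_\Cbar(v,-)^{\oplus\dim P(v)}\to P$; and for each arrow $\alpha$ and each interval summand $M_I$ of the confinement $P|_{\oP}$ (there are finitely many per arrow, because $P(s(\alpha)),P(t(\alpha))$ are finite‑dimensional and, $P$ being noise free, no summand is noise) the inclusion $M_I\hookrightarrow P|_{\oP}$ together with the structure maps of $P$ gives a compatible family, hence a morphism $P_I\to P$ via the same universal property. Let $F$ be the direct sum of all these domains. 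First I would prove the key claim that $F$ is pointwise finite‑dimensional and that $F\to P$ is an epimorphism: pointwise finiteness is where left $Q$-boundedness enters (at any object $y$ only finitely many vertices $v$ have $\Hom_\Cbar(v,y)\neq 0$, and only finitely many arrows are ``upstream'' of $y$, so only finitely many summands of $F$ are nonzero at $y$), while surjectivity is checked objectwise (an element of $P(y)$ with $y\in Q_0$ is hit by $\Hom_\Cbar(y,-)^{\oplus\dim P(y)}$; an element of $P(y)$ with $y$ on a thread lies in a single summand $M_I$ of $P|_{\oP}$, and since $P$ is noise free $I$ contains $s(\alpha)$ or $t(\alpha)$, so the element is hit by $\Hom_\Cbar(s(\alpha),-)$ in the first case and by $P_I$ in the second). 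Granting this, $F\to P$ is an epimorphism in $\rpwf\Cbar$ onto the projective $P$, hence splits, so $P$ is a direct summand of $F$; each summand of $F$ has local endomorphism ring ($\End\Hom_\Cbar(v,-)\cong\End_\Cbar(v)$ is local by weak admissibility, and $\End P_J$ is local by Proposition~\ref{IndecProj}), so by \cite[Theorem~6]{Warfield} the indecomposable summand $P$ is isomorphic to one of the summands of $F$, i.e.\ to some $\Hom_\Cbar(v,-)=P_{\{v\}}$ or some $P_I$; in every case $P\cong P_J$ for an interval $J$ in some $\oP$.

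The hard part will be the pointwise finiteness of the covering $F\to P$ — and it is genuinely necessary that $P$ be projective, not merely indecomposable: without left $Q$-boundedness one can build indecomposable, non‑projective representations supported on infinitely many threads converging on a common vertex, for which the naive $F$ fails to be pointwise finite‑dimensional. So the technical heart is to combine projectivity of $P$ with left $Q$-boundedness to control which confinements $P|_{\oP}$ are nonzero and how their interval summands sit, so that the relevant direct sum of representables and $P_J$'s remains pointwise finite‑dimensional; once that is secured, the splitting argument plus the Azumaya--Warfield theorem finishes the proof, and the loop case requires only notational adjustments.
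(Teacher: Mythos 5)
Your proposal is correct and follows essentially the same route as the paper: split off the noise case via the first decomposition theorem, and in the noise-free case use the finitely-many-intervals-per-arrow structure together with left $Q$-boundedness to build a pointwise finite-dimensional epimorphism from a direct sum of $P_J$'s onto $P$, split it, and conclude by Krull--Remak--Schmidt--Azumaya (Warfield), with (2) obtained by duality. Your treatment is somewhat more explicit than the paper's (separating vertex representables $\Hom_{\Cbar}(v,-)=P_{\{v\}}$ from the thread intervals, and spelling out the surjectivity check), but the underlying argument is the same.
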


\begin{proof}
We only prove $(1)$, since the proof of $(2)$ is dual. According to our first decomposition theorem, either $P = M_I$ for some interval $I$ contained in a given $\mathcal{P}_\alpha$, or else, $P$ is noise free. In the first case, we have an epimorphism $P_I \to P$, which has to split, showing that $P \cong P_I$. Let us assume the second case. Since $P$ is noise free, for each arrow $\alpha \in Q_1$, the set $\mathcal{P_\alpha}$ can be partitioned into finitely many intervals such that $P(\eta_{yx})$ is an isomorphism whenever $x,y$ belong to a given interval of that partition. Using this, it is not hard to see that for each arrow $\alpha$, there are finitely many intervals $I_{\alpha, 1}, \ldots, I_{\alpha, r_\alpha}$ of $\overline{\mathcal{P}_\alpha}$ such that we have an epimorphism
$$\bigoplus_{\alpha \in Q_1} \bigoplus_{i=1}^{r_\alpha}P_{I_{\alpha, r_\alpha}} \to P$$
where the summands in the direct sum are all indecomposable by Proposition \ref{IndecProj}.
The fact that $\Cbar$ is left $Q$-bounded guarantees that the left-hand side is pointwise finite, hence the above is an epimorphism in $\rpwf \Cbar$. This epimorphism splits and by Krull--Remak--Schmidt--Azumaya, it follows that $P$ has to be isomorphic to some $P_{I_{\alpha, r_\alpha}}$.
\end{proof}

\begin{example} Consider the path category $\Cbar$ from Example~\ref{ex:with ideal}(1). We point out that even though $\Cbar$ is not a hereditary category but it still has nice projective resolutions. 
Let us look at the projective resolution of the simple at $0$:
\begin{displaymath}
    \xymatrix{
        P_{[1,1]} \ar@{^(->}[r] & P_{(\frac{1}{2},1]} \ar[r] & P_{[\frac{1}{2},1)} \ar[r] & P_{(0,\frac{1}{2}]} \ar[r] & P_{[0,\frac{1}{2})} \ar@{->>}[r] & S_0.
    }
\end{displaymath}
    
\end{example}

\section{Examples and applications}\label{sec:exs}

The results we have obtained so far allow us to understand the indecomposable objects in $\rpwf \C$ or $\rpwf \Cbar$ provided we understand the indecomposable representations over subcategories of $\C$ or $\Cbar$ coming from valid partitions of the objects of $\C$, which we denote by $\C_0$. We note that categories of the form $\Cbar$ or $\C$ include finite-dimensional algebras given by quivers with an admissible ideal, locally discrete thread quivers as studied by Berg and Van Roosmalen in \cite{BvR14}, arbitrary linearly ordered sets with a minimum and maximum, and the real line (by taking a linearly oriented quiver of type $\mathbb{A}_\infty^\infty$ and threading each arrow with an open real interval).

Recall that $k$ is an algebraically closed field. In the following subsections, we will see examples of our theory we introduce in the present paper.

\subsection{New representation types for thread quivers.}

The following definitions come naturally as a consequence of our decomposition theorems. We assume here that $Q$ is a finite quiver and we consider the corresponding category $\Cbar:=\C / \I$ where $\I$ is weakly admissible. We note that if $\mathcal{M}$ is any set of sample points coming from a valid partition, then $\Cbar(\mathcal{M})$ is equivalent to a finite-dimensional algebra.





\medskip

Note that if $M \in \rpwf\Cbar$ is indecomposable, then we can consider the valid partition $\mathfrak{P}=\mathfrak{P}_M$ and the corresponding $\mathfrak{P}$-completion $\Ihat$ of $\I$ and note that $M$ is also annihilated by $\Ihat$, equivalently, $M$ is a representation over $\Chat$. In particular, if $\mathcal{M}$ is a sample of $\mathfrak{P}_M$, then $M$ is isomorphic to $\IndPM(\ResM M)$ (see Remark \ref{IndRes equals identity}) so is in the image of some induction functor. Therefore, every indecomposable representation in $\rpwf\Cbar$ is induced from an indecomposable representation of a category of the form  $\Chat(\mathcal{M})$. This leads to the following definition of representation types.

\begin{definition}\label{def:virtually finite and tame}
    We say that the category $\Cbar$ is of \emph{virtually tame representation type} (respectively, of \emph{virtually finite representation type}) if, for any valid partition $\mathfrak{P}$ of $\C_0$ with sample $\mathcal{M}$, then $\Chat(\mathcal{M})$ is of tame representation type (respectively, of finite representation type).
\end{definition}

\begin{example}
    \begin{enumerate}
        \item When $Q$ is of type $\mathbb{A}_n$ for some positive integer $n$, then $\C$ is of virtually finite representation type.
        \item When $Q$ is of acyclic type $\widetilde{\mathbb{A}}_n$ for some positive integer $n$, then $\C$ is of virtually tame representation type.
    \end{enumerate}
\end{example}

For other (extended) Dynkin types, some further restrictions on $\mathcal{P}$ are needed. We will see more examples later.
See Section~\ref{sec:hereditary} for more discussion.

There is another natural way to define the representation type of a category of the form $\Cbar$. An element $d \in \Hom(\Cbar_0, \mathbb{Z})$ is a \emph{dimension vector} provided $d(x) \ge 0$ for all $x \in \Cbar_0$. 

Next, to each dimension vector $d$, we attach a partition $\mathfrak{P}_d$ of $\C_0$ as follows. Let $\alpha$ be an arrow of $Q$.
Using the definition of a dimension vector, we have a map $d_{\alpha}:\mathcal{P}_\alpha\to \mathbb{N}$.
For $n\in \mathbb{N}$, consider $( d_{\alpha} )^{-1} (n)\subseteq \mathcal{P}_\alpha$.
Take $\{X_{\lambda,n,\alpha}\}_{\Lambda_n}$ to be the partition of $( d_{\alpha} )^{-1} (n)$ into maximal intervals of $\mathcal{P}_\alpha$ so that $\coprod_{\lambda\in\Lambda_n} X_{\lambda,n,\alpha}=(d_\alpha)^{-1}(n)$.
That is, any partition of $( d_{\alpha} )^{-1} (n)$ into intervals of $\mathcal{P}_\alpha$ is a refinement of $\{X_{\lambda,n,\alpha}\}_{\Lambda_n}$.
Then $\bigcup_{n\in\mathbb{N}} \{X_{\lambda,n,\alpha}\}_{\Lambda_n}$ is a partition of $\mathcal{P}_\alpha$ into intervals.
Doing this for each arrow $\alpha$, and putting each vertex $i$ from $Q_0$ into its own cell, yields a partition $\mathfrak{P}_d$ of $\C_0$:
\begin{displaymath}
    \mathfrak{P}_d := \{\{x\} | x\in Q_0\} \cup \left( \bigcup_{\alpha\in Q_1} \bigcup_{n\in\mathbb{N}} \{X_{\lambda,n,\alpha}\}_{\Lambda_n} \right).
\end{displaymath}

\begin{definition}\label{def:valid dimension vector}
    Let $d$ be a dimension vector. We call $d$ \emph{valid} if $\mathfrak{P}_d$ is valid.
\end{definition}

Let $M \in \rpwf\Cbar$ and let $d_M$ be its dimension vector.
Then $\mathfrak{P}_M$ is a refinement of $\mathfrak{P}_{d_M}$.
Moreover, if $\mathfrak{P}_M$ is valid then $d_M$ is valid (and so is $\mathfrak{P}_{d_M}$). 
In particular, if $M$ is indecomposable then it follows from our decomposition theorem that $\sum_{n\in\mathbb{N}} |\Lambda_n|~<~\infty$.
Thus all of $\mathfrak{P}_M$,\ $d_M$, and $\mathfrak{P}_{d_M}$ are valid.

\medskip

Having defined (valid) dimension vectors, we can now consider other variations of representation type as follows.
In the definition below, there is no loss of generality in working with valid dimension vectors since each pointwise finite-dimensional indecomposable representation has a valid dimension vector.

\begin{definition}\label{def:essentially finite and tame}
\begin{enumerate}
    \item We say that $\Cbar$ is of \emph{essentially finite representation type} if, for any valid dimension vector $d$, there are at most finitely many non-isomorphic indecomposable representations of dimension vector $d$.
    \item We say that $\Cbar$ is of \emph{essentially tame representation type} if, for any valid dimension vector $d$, all but finitely many indecomposable representations, up to isomorphism, fall into finitely many one-parameter families of $d$-dimensional indecomposable representations. 
\end{enumerate}
\end{definition}

In the above definition, by a one-parameter family $\mathcal{F}$
with valid dimension vector $d$, we mean that there is a sample $\mathcal{M}$ of $\mathfrak{P}_d$ and a one-parameter family $\mathcal{F}'$ of indecomposable representations of $\Chat(\mathcal{M})$ such that every representation in $\mathcal{F}$ is isomorphic to a representation in $\IndP(\mathcal{F}')$.

\begin{proposition}\label{prop:essentially is virtually}
    If the category $\Cbar$ is of essentially finite (respectively essentially tame) representation type, then it is of virtually finite (respectively virtually tame) representation type.
\end{proposition}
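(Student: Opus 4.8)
The plan is to verify the definition of virtually finite (resp.\ virtually tame) one valid partition at a time. Fix a valid partition $\mathfrak{P}$ of $\C_0$ together with a sample $\mathcal{M}$; by Definition~\ref{def:virtually finite and tame} it suffices to show that $\Chat(\mathcal{M})$ is of finite (resp.\ tame) representation type, given that $\Cbar$ is essentially finite (resp.\ essentially tame). Since $Q$ is finite, $\mathcal{M}$ is finite, and because every $\Hom$ space in $\Chat$ is a quotient of a $\Hom$ space in $\Cbar$, the category $\Chat(\mathcal{M})$ is (equivalent to the module category of) a finite-dimensional $k$-algebra; in particular Drozd's dichotomy applies to it.

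The engine of the argument is the induction functor $\IndP \colon \rpwf(\Chat(\mathcal{M})) \to \rpwf(\Chat) \subseteq \rpwf(\Cbar)$. By Proposition~\ref{PropPiStar} and Proposition~\ref{prop:equivalence from localized to sample}, $\IndP = \pi^*\circ\Phi^*$ is $k$-linear, exact and fully faithful, so it preserves indecomposability and reflects isomorphisms, and it carries an $m$-parameter family of pairwise non-isomorphic indecomposables to an $m$-parameter family of pairwise non-isomorphic indecomposables. Moreover, since $\IndP(N)(x)=N(s_x)$, an indecomposable $N$ of dimension vector $e$ is sent to an indecomposable $\IndP(N)$ of $\Cbar$ whose dimension vector $d$ satisfies $d(x)=e(s_x)$; this $d$ is constant on the cells of $\mathfrak{P}$, hence the induced partition $\mathfrak{P}_d$ is a coarsening of $\mathfrak{P}$ and therefore still cuts each $\mathcal{P}_\alpha$ into finitely many interval cells, i.e.\ $d$ is a valid dimension vector in the sense of Definition~\ref{def:valid dimension vector}.

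For the finite case, suppose $\Chat(\mathcal{M})$ is representation-infinite. By the second Brauer--Thrall conjecture, which is a theorem over the algebraically closed field $k$ (see \cite{Bon}), there is a dimension $n$ carrying infinitely many pairwise non-isomorphic indecomposables; as there are only finitely many dimension vectors of total dimension $n$, one of them, say $e$, carries infinitely many. Applying $\IndP$ produces infinitely many pairwise non-isomorphic indecomposable representations of $\Cbar$, all of the valid dimension vector $d$ with $d(x)=e(s_x)$, contradicting essentially finite representation type. For the tame case, suppose $\Chat(\mathcal{M})$ is not tame; by Drozd's dichotomy it is wild, so for some dimension vector $e$ it admits a two-parameter family of pairwise non-isomorphic $e$-dimensional indecomposables (a wild algebra has, for a suitable dimension vector, arbitrarily many parameters of indecomposables). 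Transporting this family along $\IndP$ gives a two-parameter family of pairwise non-isomorphic $d$-dimensional indecomposables of $\Cbar$ with $d$ valid. But essentially tame representation type forces all but finitely many $d$-dimensional indecomposables of $\Cbar$ to lie in a finite union of one-parameter families, and a finite union of one-parameter families cannot contain, up to finitely many exceptions, a two-parameter family of pairwise non-isomorphic modules. This contradiction shows $\Chat(\mathcal{M})$ is tame. In either case $\Chat(\mathcal{M})$ has the required representation type, and since $\mathfrak{P}$ and $\mathcal{M}$ were arbitrary, $\Cbar$ is virtually finite (resp.\ virtually tame).

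The main obstacle is the parameter-counting step of the tame case: that finitely many one-parameter families cannot absorb a two-parameter family of pairwise non-isomorphic modules, and, relatedly, that wildness of $\Chat(\mathcal{M})$ really does produce a genuine two-parameter family. Both are standard inputs of tame--wild theory (Drozd's theorem and the behaviour of the number of parameters under representation embeddings), and I would invoke them in that form rather than reprove them; one also uses that $\IndP$, being exact, $k$-linear and fully faithful, extends over the bimodules parametrizing such families and hence genuinely transports families, together with the minor observation that the one-parameter families appearing in Definition~\ref{def:essentially finite and tame}, although defined via other samples of $\mathfrak{P}_d$ and via the $\mathfrak{P}_d$-completion of $\I$, remain honest one-parameter families of representations of $\Cbar$ because the corresponding induction functor is again fully faithful.
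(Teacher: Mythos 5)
Your proof is correct and follows essentially the same route as the paper: argue the contrapositive, invoke the second Brauer--Thrall theorem to produce infinitely many non-isomorphic indecomposables of a fixed dimension vector over $\Chat(\mathcal{M})$, and transport them along the fully faithful functor $\IndP$ to contradict essential finiteness. You additionally supply two details the paper glosses over --- the check that the induced dimension vector $d$ is valid (since it is constant on cells of $\mathfrak{P}$, so $\mathfrak{P}_d$ coarsens $\mathfrak{P}$) and a worked-out tame case via Drozd's dichotomy and parameter counting, which the paper omits as ``very similar'' --- and both are sound.
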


\begin{proof} Assume that $\Cbar$ is not virtually finite. That means that there is a valid partition $\mathcal{M}$ of $\C_0$ such that $\Chat(\mathcal{M})$ is not of finite representation type. By the second Brauer-Thrall conjecture (now Theorem in~\cite{B85}), there is a dimension vector $d$ for $\Chat(\mathcal{M})$ for which $\Chat(\mathcal{M})$ admits infinitely many non-isomorphic indecomposable representations of dimension vector $d$.

Now, the functor $\IndP:~\rpwf(\Chat(\mathcal{M}))~\to~\displaystyle\rpwf\left(\Chat\right)$ applied to this infinite family yields an infinite family of non-isomorphic indecomposable representations in $\rpwf (\Chat)$ all having the same dimension vector, and thus also in $\rpwf \Cbar$. Hence $\Cbar$ is not essentially finite. The proof for the tame case is very similar and we omit it.
\end{proof}

We note that the converse of the above proposition is not true, both for the finite type and tame type. The following example illustrates this.

\begin{example}~\label{essentially/virtually examples}

\begin{enumerate}
    \item In the following table, by $\mathbb{A}_n$ or $\widetilde{\mathbb{A}}_n$ we mean any path category of an acyclic quiver of that type. By $\Lambda_1$, we mean that $\C$ is the path category of a thread quiver of type $\mathbb{A}_n$ with continuous intervals as the threadings. The category $\Lambda_2$ is the result of threading both arrows of the Kronecker quiver with the real interval $(0,1)$, picking two points $x<y$ in the top arrow, setting $\I=\langle\eta_{yx}\rangle$, and taking $\Lambda_2=\Cbar$. Now, $\Lambda_3$ is the result of threading the middle arrow of a quiver of type $\widetilde{\mathbb{D}}_5$ (which has 6 vertices) with the real interval $(0,1)$. Finally, $\Lambda_4$ is the result of threading the two arrows of the Kronecker quiver with the real interval $(0,1)$. 

\begin{center}
{\tabulinesep=1.2mm
\begin{tabu}{|r|c|c|c|}
    \hline
    & classically & essentially & virtually \\ \hline
    finite & $\mathbb{A}_n$ & $\Lambda_1$ & $\Lambda_2$ \\ \hline
    tame & $\widetilde{\mathbb{A}}_n$ & $\Lambda_3$ & $\Lambda_4$ \\ \hline
\end{tabu}}
\end{center}
Below are the thread quivers of these $4$ examples, where a double-arrow means that the linearly ordered set on top is the open interval $(0,1)$.
\begin{align*}
   \Lambda_1 :& \ \xymatrix{\bullet_1 \ar@{=>}[r] & \bullet_2 \ar@{=>}[r] & \cdots \ar@{=>}[r] & \bullet_{n-1} \ar@{=>}[r] & \bullet_n } \\
   \Lambda_2, \Lambda_4 :&  \xymatrix{\bullet_a\ar@/^2ex/@{=>}[r]^{~} \ar@/_2ex/@{=>}[r]_{~} & \bullet_b }\\
   \Lambda_3 :&  \xymatrix@R=1ex{ \bullet \ar[dr] & & & \bullet \\ 
     & \bullet \ar@{=>}[r]  & \bullet \ar[ur] \ar[dr] & \\
               \bullet \ar[ur]  & & &  \bullet} 
\end{align*}

For $\Lambda$ some entry in the table above, it also has the properties of being in the same row in all the columns to its right.
For example, $\Lambda_3$ is also virtually tame.
However, $\Lambda_2$ is not of essentially finite type while $\Lambda_4$ is not of essentially tame type. For $\Lambda_2$ and $\Lambda_4$, the distinction between the essential and virtual types can be seen in the analysis of the (valid) dimension vector consisting of all ones.
Each entry does \emph{not} have the properties of the entries to its left.

\item In Figure \ref{reptype}, we see that threading can change the representation type. In this figure, the thicker arrows represent threading with an open real interval and thin arrows represent no (or empty) threading. Note that (I) is representation finite, that (II) is essentially finite (therefore virtually finite) while (III) and (IV) are wild (not essentially tame). 

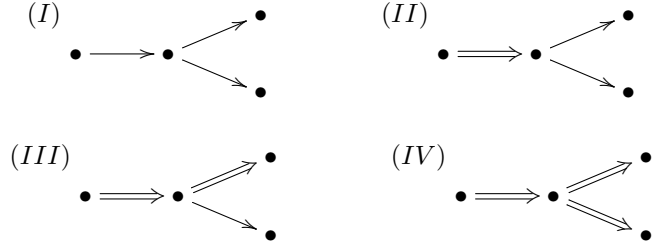
\begin{figure}[H]
    \label{reptype}
\[(I) \xymatrix@R=1ex{   & & \bullet \\ 
    \bullet \ar[r] & \bullet \ar[ur] \ar[dr] & \\
               & & \bullet} \qquad\qquad (II) \xymatrix@R=1ex{   & & \bullet \\ 
    \bullet \ar@{=>}[r] & \bullet \ar[ur] \ar[dr] & \\
               & & \bullet}\] 

\[(III) \xymatrix@R=1ex{   & & \bullet \\ 
    \bullet\ar@{=>}[r] & \bullet\ar@{=>}[ur] \ar[dr] & \\
               & & \bullet} \qquad\qquad (IV) \xymatrix@R=1ex{               & & \bullet \\ 
    \bullet\ar@{=>}[r] & \bullet\ar@{=>}[ur] \ar@{=>}[dr] & \\
               & & \bullet}\] 
               \caption{Representation type varies depending on how we thread.}
\end{figure}

\end{enumerate}
\end{example}

\begin{remark}
    We note that in the virtually finite representation type case, each indecomposable pointwise finite-dimensional representation is induced from an indecomposable representation of a subcategory which is of finite representation type. If one considers posets, then this idea was also exploited in \cite{ABH24}. 
\end{remark}

\subsection{Special biserial categories from thread quivers} 
\label{SubSecSB}

Before we define special biserial categories from thread quivers, we first define two types of families of relations that we call quadratic relations.

\begin{definition}\label{def:quadratic}
    Given an object $x$ in $\C$, either $x$ lies in some $\mathcal{P}_\alpha$ for some arrow $\alpha$ or else $x\in Q_0$. In the latter case, we assume that we have an incoming arrow $\alpha$ to $x$ and an outgoing arrow $\beta$ from $x$. In each case, we define two types of families of relations that we call \emph{quadratic}.
    \begin{enumerate}
        \item[(I)]  Let $x\in Q_0$.
        For $y \in \mathcal{P}_\alpha\cup\{s(\alpha)\}$ and any $z \in \mathcal{P}_\beta\cup\{t(\beta)\}$, we consider the family $\{\eta_{zx}\eta_{xy}\}_{y,z}$ and we call it the \emph{quadratic family associated to $\beta\alpha$}. Note that if $\alpha$ is a loop, then by $\eta_{x,s(\alpha)}$, we mean the arrow $\alpha$. Similarly, if $\beta$ is a loop, then by $\eta_{t(\beta),x}$, we mean the arrow $\beta$. 
        \item[(II)] Let $x\in \mathcal{P}_\alpha$. For any $y<x<z$ with $y,z\in\overline{\mathcal{P}}_\alpha$, we consider the family $\{\eta_{zx}\eta_{xy}\}_{y,z}$ that we call the \emph{quadratic family associated to $x$}.
    \end{enumerate}
\end{definition}

We let $Q$ be a \emph{biserial quiver}, that is, a finite quiver such that for each vertex $v$, there are at most two arrows leaving $v$ and at most two arrows entering $v$. We consider a thread quiver $(Q, \mathcal{P})$ and let $\C$ denote the corresponding path category. Finally, we consider a weakly admissible ideal $\I$ with the properties that
\begin{itemize}
    \item[(SB1)] Let $\alpha, \beta, \gamma \in Q_1$ such that $v:=t(\alpha) = s(\beta) = s(\gamma)$ and $\beta\neq\gamma$. Then $\I$ contains at least one of the quadratic family associated to $\beta\alpha$ or the quadratic family associated to $\gamma\alpha$.

    \item[(SB2)] Let $\alpha, \beta, \gamma \in Q_1$ be distinct such that $v:=t(\alpha) = t(\beta) = s(\gamma)$ and $\alpha\neq \beta$.  Then $\I$ contains at least one of the quadratic family associated to $\gamma\alpha$ or the quadratic family associated to $\gamma\beta$.

\end{itemize}

Note that one may have infinitely many relations in $\I$. Now, we consider the quotient $\Cbar = \C /\I$ and call this a \emph{special biserial} category of $(Q, \mathcal{P})$. 

Notice that Definition~\ref{def:quadratic} allows the possibility that we have not threaded any arrows. In this case, the special biserial categories are special biserial algebras in the classical sense. We can similarly define gentle and string categories. We make the following comments.

\begin{remark}
   We define gentle categories and string categories, which are analogous to gentle algebras and string algebras, respectively.
   \begin{enumerate}
       \item For gentle categories, we replace ``at least one'' in each of (SB1), (SB2) by ``exactly one''. Moreover, any other (family of) relations in $\I$ must be quadratic (Definition~\ref{def:quadratic}).
       \item For string categories, we require, in addition to (SB1) and (SB2), that the generators of $\I$ are all nontrivial path-like elements.
   \end{enumerate}
\end{remark}

We note that if $\M$ is a set of sample points from a valid partition, then $\Cbar(\M)$ is a finite-dimensional special biserial (respectively gentle, string) algebra, provided $\Cbar$ is a special biserial (respectively gentle, string) category. Since special biserial, gentle and string algebras are all of tame representation type, this leads to the following.

\begin{proposition}\label{special biserial, gentle, string}
    Let $\Cbar$ be a special biserial (or gentle, or string) category from a thread quiver $(Q, \mathcal{P})$ where $Q$ is a biserial quiver. Then $\Cbar$ is virtually tame.
\end{proposition}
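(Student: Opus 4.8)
The plan is to unwind Definition~\ref{def:virtually finite and tame}: I would fix an arbitrary valid partition $\mathfrak{P}$ of $\C_0$ together with a sample $\M$, and show that $\Chat(\M)$ is of tame representation type. Since $Q$ is a biserial quiver it is in particular finite, and a valid partition subdivides each $\mathcal{P}_\alpha$ into finitely many cells; hence $\M$ is finite and $\Cbar(\M)$ is a finite-dimensional algebra. By the remark immediately preceding the proposition, $\Cbar(\M)$ is then a finite-dimensional special biserial (respectively gentle, respectively string) algebra, and by the classical fact recalled just before the statement, such algebras are of tame representation type; so $\Cbar(\M)$ is tame.

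It remains to transfer tameness from $\Cbar(\M)$ to $\Chat(\M)$. Since $\I \subseteq \Ihat$, the quotient functor $p\colon \Cbar = \C/\I \twoheadrightarrow \C/\Ihat = \Chat$ is the identity on objects and full, so it restricts to a functor $\Cbar(\M) \to \Chat(\M)$ that is the identity on objects and full on Hom-spaces. Restriction along this functor yields a fully faithful exact embedding $\rpwf(\Chat(\M)) \hookrightarrow \rpwf(\Cbar(\M))$, whose essential image consists of the representations annihilated by the image of $\Ihat$. A fully faithful additive functor preserves indecomposability, and this embedding clearly preserves dimension vectors. Hence, for every dimension vector $d$, the indecomposable representations of $\Chat(\M)$ of dimension vector $d$ form, up to isomorphism, a subclass of the indecomposable representations of $\Cbar(\M)$ of dimension vector $d$; since all but finitely many of the latter lie in finitely many one-parameter families (as $\Cbar(\M)$ is tame), the same is true of the former. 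Thus $\Chat(\M)$ is tame, and since $\mathfrak{P}$ and $\M$ were arbitrary, $\Cbar$ is virtually tame. The gentle and string cases are handled identically, using the corresponding statements in the remark.

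The point that needs care — and the reason one cannot simply apply the preceding remark to $\Chat$ directly — is that the $\mathfrak{P}$-completion $\Ihat$ need not be weakly admissible even when $\I$ is (indeed it may collapse some objects of $\M$ to zero, as in an earlier example), so $\Chat$ is not itself a special biserial category in the technical sense, and $\Chat(\M)$ is in general only a (possibly object-annihilating) quotient of $\Cbar(\M)$. The argument above deliberately uses nothing beyond the inheritance of tameness under such quotients, which is immediate from the fully faithful embedding of the representation categories.
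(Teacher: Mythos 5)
Your proof is correct, but it takes a genuinely different route from the paper's. The paper works directly with $\Chat(\M)$: it presents this algebra as $k\Gamma/I$ for the finite biserial quiver $\Gamma$ obtained by replacing each arrow of $Q$ by a linear $\mathbb{A}_n$ quiver, and then runs a case analysis on how the completion $\Ihat$ acts on each quadratic family --- producing, according to the position of the relevant point inside its cell, either a genuine quadratic relation, a killed arrow, or a killed idempotent --- to conclude that $\Chat(\M)\cong k\Gamma'/I'$ is itself a special biserial bound quiver algebra, hence tame by \cite{WW85}. You instead take the special biserial structure of $\Cbar(\M)$ as given (which the paper's preceding remark permits) and transfer tameness across the quotient $\Cbar(\M)\twoheadrightarrow\Chat(\M)$, observing that restriction along a full, identity-on-objects functor embeds $\rpwf(\Chat(\M))$ fully faithfully, exactly, and dimension-vector-preservingly into $\rpwf(\Cbar(\M))$. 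This is shorter and correctly isolates the one issue the remark does not settle (that $\Ihat$ need not be weakly admissible, so $\Chat$ is not itself a special biserial category); but it buys less, since the paper's explicit presentation of $\Chat(\M)$ as a special biserial bound quiver algebra is exactly what is used afterwards (Remark~\ref{rmk:special biserial representations}) to classify the indecomposables of $\Cbar$ by strings and bands, and your argument does not recover that presentation. One small point to tighten: a subclass of a one-parameter family is not literally a one-parameter family, so ``the same is true of the former'' should invoke the standard fact that quotients of tame algebras are tame --- for a family $M\otimes_{k[x]}k[x]/(x-\lambda)$ the parameters $\lambda$ whose fibre is annihilated by a fixed ideal form a closed, hence finite or cofinite, subset of the affine line, which is all one needs.
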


\begin{proof}
    Let $\mathfrak{P}$ be a valid partition with corresponding sample $\M$, and consider $\Chat(\M)$. We can present this algebra using quiver $\Gamma$ where $\Gamma$ is obtained from $Q$ by replacing each arrow by a finite linear quiver of type $\mathbb{A}$. Clearly, $\Gamma$ is biserial. We get $\Chat(\M) \cong k\Gamma / I$. Let us describe $I$, which will be induced from $\Ihat$. We first consider a quadratic family of Type II associated to $x$.
    \begin{enumerate}
        \item Suppose $\{x\}$ is a cell of $\mathfrak{P}$ ($x$ is both the maximum and the minimum of its cell). In this case, the quadratic family induces one quadratic relation going through $x$.
        \item Suppose either (i) $x$ is the maximum of a cell $X$ but not the minimum or (ii) $x$ is the minimum but not the maximum of a cell $X$. Let $u$ be in $\Gamma$ corresponding to cell $X$. Then $u$ has one incoming arrow $\alpha$ and one outgoing arrow $\beta$. If $x$ is the maximum of $X$, then $\beta \in I$, while if $x$ is the minimum of $X$, then $\alpha \in I$.
        \item Suppose $x$ is neither the maximum nor the minimum of the cell $X$ that contains it. If $u$ is the vertex of $\Gamma$ associated to $X$, then the idempotent $e_u$ belongs to $I$.
    \end{enumerate}
    For a quadratic family of Type I, we recover a quadratic relation as in case (1) above. Hence, the algebra $\Chat(\M)$ is isomorphic to $k\Gamma' / I'$ where $\Gamma'$ is obtained from $\Gamma$ by possibly removing vertices (case (3) above) and arrows (case (2) above), and $I'$ is the induced admissible ideal from $I$. It is clear that $(\Gamma', I')$ is a special biserial bound quiver with $\Chat(\M) \cong k\Gamma' / I'$. Therefore, since all biserial algebras are tame \cite{WW85}, we conclude that $\Cbar$ is virtually tame.
\end{proof}

\begin{remark}\label{rmk:special biserial representations}
    It follows from Theorem~\ref{thm:second decomposition theorem} and Proposition~\ref{special biserial, gentle, string} that we have a classification of the representations of our given special biserial (or gentle, or string) category $\Cbar$ as follows. For every valid partition $\mathfrak{P}$ and each sample $\M$ coming from $\mathfrak{P}$, we consider all $\IndP(Z)$ for each string or band representation $Z$ of $\rpwf \Chat(\M)$. 
\end{remark}

\begin{example}
    Notice that Example~\ref{ex:with ideal}(2) is a gentle category. For any sample $\mathcal{M}$, we get a representation-finite gentle algebra equivalent to $\Cbar(\mathcal{M})$.
\end{example}

\section{Some subcategories and new hereditary categories}\label{sec:hereditary}

Notice that $\Rep\ \C$ is not hereditary, in general. For instance, consider $Q=\mathbb{A}_2$ with a copy of $\mathbb{R}$ for $\mathcal{P}$. Take any real number $x$ and let $S_x$ be the corresponding simple representation. Then the kernel of the epimorphism $P_x \to S_x$ is not projective in $\Rep\ \C$. More generally, the same phenomenon happens if $x$ is any object of some $\mathcal{P}_\alpha$ having no immediate successor. This shows that path categories of thread quivers are different from path categories of quivers. Indeed, the category of representations of any quiver is always hereditary; see \cite[Page 79]{GR97}. This also yields that the category of pointwise finite-dimensional representations of a quiver is hereditary. We do not know if the latter holds for the path category of an arbitrary thread quiver. 

In this section, we prove that under some assumptions on $Q$, the category $\rpwf\C$ is hereditary. We also prove that the category of \emph{quasi noise free} representations is always hereditary and abelian.

\subsection{The general case.}

Recall the definition of a quasi noise free representation (Definition~\ref{def:quasi noise free}) and that $\rqnf\C$ denotes the full subcategory of $\rpwf\C$ consisting of the quasi noise free representations. We start with the following.

\begin{proposition}\label{thm:quasi noise free is serre}
    The category $\rqnf\C$ a is Serre subcategory of $\rpwf \C$. In particular, it is abelian.
\end{proposition}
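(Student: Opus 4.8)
To show that $\rqnf\C$ is a Serre subcategory of $\rpwf\C$, I must verify that it is closed under subobjects, quotients, and extensions. The key structural fact I will use throughout is Theorem~\ref{thm:noise and noise free decomposition}: every $M\in\rpwf\C$ decomposes as $M_{NF}\oplus\bigoplus_{\alpha\in Q_1}U_\alpha$ with $\supp(U_\alpha)\subseteq\mathcal{P}_\alpha$, and $M$ is quasi noise free precisely when each $U_\alpha$ has finitely many indecomposable summands. Since the confinement operation $M\mapsto M|_{\oP}$ is exact (it only kills objects and morphisms outside $\oP$, and this is done compatibly), and since by Theorem~\ref{thm:c-b} every confinement $M|_{\oP}$ decomposes uniquely into interval representations (plus band summands when $\alpha$ is a loop), the whole problem localizes to a single arrow: I will reduce to showing that the subcategory of $\rpwf\C_\alpha$ consisting of representations of $\mathcal{P}_\alpha$ (equivalently, the $\mathbb{A}_2$-situation) whose noise part has finitely many summands is closed under subobjects, quotients, and extensions.

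\textbf{Closure under subobjects and quotients.} Let $0\to M'\to M\to M''\to 0$ be a short exact sequence in $\rpwf\C$ with $M$ quasi noise free. Fix an arrow $\alpha$. Applying the exact confinement functor to $\oP$ gives $0\to M'|_{\oP}\to M|_{\oP}\to M''|_{\oP}\to 0$. By hypothesis $M|_{\oP}$ has finitely many indecomposable summands, hence has finite total dimension $\dim M|_{\oP}:=\sum_{x\in\oP}\dim M(x)$ as a representation over the locally ordered set $\oP$... no, this can fail to be finite. Instead I argue via supports: I claim the \emph{number of indecomposable summands} of $M'|_{\oP}$ and $M''|_{\oP}$ is bounded by that of $M|_{\oP}$. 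Indeed, $M|_{\oP}$ is a finite direct sum $\bigoplus_{i=1}^n M_{I_i}$ (using Theorem~\ref{thm:c-b}); pick a finite set of objects $T=\{x_1,\dots,x_n\}$ with $x_i$ in the "bulk" of each $I_i$, chosen so that every interval strictly larger than a point meets $T$ — more carefully, since there are finitely many intervals $I_i$, I can choose finitely many points of $\oP$ such that any interval representation appearing in any subquotient of $M|_{\oP}$ has nonzero value at one of these points, using that a subquotient's intervals are built from the $I_i$. Then any interval summand of $M'|_{\oP}$ or $M''|_{\oP}$ hits $T$, and pointwise finite-dimensionality at the finitely many points of $T$ bounds the number of such summands. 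This bounds the noise parts of $M'$ and $M''$, so both are quasi noise free. I would want to phrase this cleanly using the uniqueness in Theorem~\ref{thm:c-b} and the fact that the poset of interval-subquotients of a finite sum of intervals is combinatorially controlled.

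\textbf{Closure under extensions.} Now suppose $0\to M'\to M\to M''\to 0$ with $M', M''$ quasi noise free; I must show $M$ is. Again confine to $\oP$: $M|_{\oP}$ is an extension of $M''|_{\oP}$ by $M'|_{\oP}$, both of which are finite direct sums of interval (and band) representations. An extension of a pwf representation with $a$ summands by one with $b$ summands, over a linearly ordered set, has at most $a+b$ indecomposable summands — this follows because the length (number of indecomposable summands, counted via a suitable "jump count" of the representation along $\oP$) is additive in short exact sequences for representations of a totally ordered set; alternatively, observe $\dim_k\Hom$ and the interval-decomposition data give $\#\text{summands}(M|_{\oP})\le \#\text{summands}(M'|_{\oP})+\#\text{summands}(M''|_{\oP})$. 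Hence $M|_{\oP}$ has finitely many summands for each $\alpha$, so $M$ is quasi noise free. Finally, $\rqnf\C$ is nonempty and closed under finite direct sums (the noise part of $M_1\oplus M_2$ confined to $\alpha$ is the sum of the two noise parts), so together with the above it is a Serre subcategory; being a Serre subcategory of the abelian category $\rpwf\C$, it is abelian.

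\textbf{Main obstacle.} The delicate point is the bookkeeping in the subobject/quotient case: I need that passing to a sub- or quotient representation of $M|_{\oP}$ cannot \emph{increase} (uncontrollably) the number of interval summands, and the clean way to see this is to set up a length function $\ell$ on $\rpwf\C_\alpha$ that is additive on short exact sequences and that dominates the number of indecomposable summands. For a totally ordered set such a function can be built from the ranks of the structure maps together with the pointwise dimensions at a chosen finite "test set" of points adapted to $M$; making this work uniformly — in particular handling the band summands when $\alpha$ is a loop, and the fact that $\mathcal{P}_\alpha$ may be arbitrary (e.g.\ $\mathbb{R}$ or larger) — is where the real care is needed. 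Everything else reduces, via exactness of confinement and Theorems~\ref{thm:c-b} and~\ref{thm:noise and noise free decomposition}, to this one-arrow estimate.
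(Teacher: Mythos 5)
Your overall reduction (confine to each $\oP$, use Theorem~\ref{thm:c-b}, count summands) is a genuinely different route from the paper's, but the two steps you flag as delicate are in fact where the argument breaks. First, the universal finite test set $T$ for the subobject/quotient case does not exist. Take $\alpha$ an arrow of an $\mathbb{A}_2$ quiver with $\mathcal{P}_\alpha$ a real interval and $M|_{\oP}=M_{[0,1)}$, a single interval summand: the subrepresentations include $M_{(1-\frac{1}{m},1)}$ for every $m$, and no finite subset of $[0,1)$ meets all of them. So the claim that one can choose finitely many points hit by ``every interval representation appearing in any subquotient of $M|_{\oP}$'' is false; your counting mechanism needs $T$ to depend on the particular subquotient $L$, at which point you would essentially already need to know that $L$ has finitely many summands. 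Second, the extension step rests on the assertion that the number of indecomposable summands is (sub)additive in short exact sequences over a totally ordered set; additivity is certainly false (e.g.\ $0\to M_{[1,2]}\to M_{[0,2]}\to M_{[0,1)}\to 0$), and subadditivity is exactly the kind of statement that requires proof here rather than being available as a cited fact.

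The paper sidesteps both issues by never counting summands directly: it works with the partition $\mathfrak{P}_M$ and the equivalence of Proposition~\ref{prop:noise free inducess valid partition} ($M$ is quasi noise free iff $\mathfrak{P}_M$ is valid). For $L\subseteq M$ with $M$ quasi noise free, on each of the finitely many cells of $\mathfrak{P}_M$ the structure maps of $M$ are isomorphisms, so those of $L$ are injective; hence $\dim L$ is non-decreasing and bounded along the cell, takes finitely many values, and the cell splits into finitely many subcells on which $L$ (and then, by the five lemma, $M/L$) is constant. This shows $\mathfrak{P}_L$ and $\mathfrak{P}_{M/L}$ are valid. For extensions $0\to M\to E\to N\to 0$, the common refinement $\mathfrak{P}_{M\oplus N}$ is valid and refines $\mathfrak{P}_E$, so $\mathfrak{P}_E$ is valid. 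If you want to salvage your approach, you should replace the test-set and subadditivity claims with this partition argument (or prove an honest length-type bound for representations of a totally ordered set, which is considerably more work).
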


\begin{proof}
     Let $M$ be a quasi noise free representation and $L$ be a subrepresentation of $M$. We first show that $L$ is quasi noise free. We let $\alpha \in Q_1$. Since $M$ is quasi noise free, the partition $\mathfrak{P}_M$ is valid (Proposition~\ref{prop:noise free inducess valid partition}). So, $\mathcal{P}_\alpha$ is partitioned into finitely many cells $X_1,\ldots,X_r$ such that for any $x\leq y\in X_i$ the map $M(\eta_{yx})$ is an isomorphism.
     For such $x,y$, the map $L(\eta_{yx})$ is the restriction of $M(\eta_{yx})$, hence is injective. But the dimension of $L$ on $X_i$ can take only finitely many values, in increasing order when we follow the order in $X_i$. This means that $X_i$ can be partitioned into finitely many subintervals such that for each such subinterval and $x < y$ in it, $L(\eta_{yx})$ is an isomorphism.  Hence, it follows that for such $x,y$, the map $(M/L)(\eta_{yx})$ is also an isomorphism. This proves that both $L, M/L$ are quasi noise free. Now, let
     \[0 \to M \to E \to N \to 0\]
     be a short exact sequence of pointwise finite-dimensional representations with $M,N$ quasi noise free. We are given two valid partitions $\mathfrak{P}_{M}$ and $\mathfrak{P}_{N}$.
     For each arrow $\alpha \in Q_1$, we can define a new partition of $\mathcal{P}_\alpha$ by intersecting the corresponding cells from $\mathfrak{P}_{M}$ and $\mathfrak{P}_{N}$.
     This yields the valid partition $\mathfrak{P}_{M\oplus N}$ originally defined on page~\pageref{partition of the direct sum}, which is a refinement of $\mathfrak{P}_E$.
     This proves that $E$ is quasi noise free.
\end{proof}

The next theorem yields a hereditary abelian category that contains all indecomposable pointwise finite-dimensional representations.

\begin{theorem}\label{thm:quasi noise free is hereditary}
    The category $\rqnf \C$ is hereditary.
\end{theorem}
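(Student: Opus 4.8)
The plan is to show that for any two objects $M, N$ in $\rqnf\C$ we have $\Ext^2_{\rqnf\C}(N,M) = 0$; since $\rqnf\C$ is abelian (Proposition~\ref{thm:quasi noise free is serre}) and is a Serre subcategory of $\rpwf\C$, it suffices to show that given a short exact sequence $0 \to M \to E \to L \to 0$ in $\rqnf\C$ and an epimorphism $0 \to K \to P \to L \to 0$ with $P$ projective in $\rqnf\C$, the induced sequence can be spliced to show that $\Ext^1(-,M)$ is right exact, equivalently that every submodule of a projective object is again projective. So first I would identify enough projectives: for each object $z$ of $\C$ the representable $\Hom_\C(z,-)$ lies in $\rqnf\C$ (it is pointwise finite-dimensional by interval finiteness built into $\C$ being Hom-finite, and it has only finitely many — in fact no — noise summands on each arrow), and more generally the $P_J$ of Section~\ref{sec:proj-inj} are projective; these generate, so any $M \in \rqnf\C$ has a projective presentation inside $\rqnf\C$.

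The main reduction is to the $\mathbb{A}_2$ case arrow-by-arrow. Given $M$ quasi noise free, the partition $\mathfrak{P}_M$ is valid (Proposition~\ref{prop:noise free inducess valid partition}), so on each $\oP$ the confinement $M|_{\oP}$ is a \emph{finite} direct sum of interval representations. The point is that a subrepresentation $L$ of a projective $P$ is, when confined to each arrow, a subrepresentation of $P|_{\oP}$, and one checks (exactly as in the proof of Proposition~\ref{thm:quasi noise free is serre}) that $L$ is again quasi noise free; then one must show $L$ is projective. Here I would use the structure of projectives from Theorem~\ref{ThmAllProj}-type arguments locally: on each arrow the projective objects, confined, look like finite direct sums of "interval projectives" $P_J$ (intervals that are up-closed in the relevant sense, or $\Hom(x,-)$-type pieces), and a subobject of such a confined projective decomposes again into such pieces because over a totally ordered set every pointwise finite-dimensional representation is a direct sum of intervals (Theorem~\ref{thm:c-b}) and a subobject of a direct sum of "projective intervals" is a direct sum of "projective intervals" — this is the one-parameter heredity already essentially present in the literature on $\mathbb{R}$ (cf. \cite{IRT22}). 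Globally, one assembles these local pieces: since $\C$ only involves finitely many arrow-interactions around any given vertex in the directions that matter for a fixed morphism, the kernel of a map between quasi noise free representations is built from finitely many interval-type projective summands plus genuinely quiver-theoretic data, and the quiver part is hereditary by \cite{GR97}.

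Concretely the steps in order: (1) record that $\rqnf\C$ has enough projectives, given by (finite direct sums of) the $P_J$ and the representables; (2) given $0 \to L \to P$ with $P$ projective in $\rqnf\C$, use Proposition~\ref{thm:quasi noise free is serre} to get $L \in \rqnf\C$ with a valid partition $\mathfrak{P}_L$; (3) refine to a common valid partition $\mathfrak{P}$ refining both $\mathfrak{P}_L$ and $\mathfrak{P}_P$, pass to $\Chat(\M)$ via $\ResM$, and use that $\Chat(\M)$ is the path category of a quiver (obtained by replacing each arrow with a finite $\mathbb{A}_n$), hence hereditary, so $\ResM(L)$ is projective over $\Chat(\M)$; (4) transport back via the fully faithful $\IndP$ (Proposition~\ref{PropPiStar}) and check that $\IndP$ preserves projectivity in these subcategories, using $\ResM \circ \IndP \cong \mathrm{id}$ together with the fact that $\IndP$ sends a projective resolution over $\Chat(\M)$ to a resolution over $\Chat$ whose terms remain in $\rqnf\C$; (5) conclude $L$ is projective, hence $\rqnf\C$ is hereditary.

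The hard part will be step (4): controlling what $\IndP$ does to projective objects and to exactness. The restriction functor is exact and has $\ResM\IndP \cong \mathrm{id}$, but $\IndP$ need not be exact in general, so I would instead argue directly that $\IndP$ of an $\Chat(\M)$-projective is $\rqnf\C$-projective by using the adjunction-type identity $\Hom_{\Chat}(\IndP(X), Y) \cong \Hom_{\Chat(\M)}(X, \ResM(Y))$ (which follows from $\IndP$ being fully faithful with $\ResM$ a one-sided inverse landing appropriately), so that $\Hom_{\rqnf\C}(\IndP(X), -)$ is exact whenever $\Hom_{\Chat(\M)}(X,-)$ is and $\ResM$ is exact — both of which hold. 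The remaining subtlety is that a subobject $L$ of $P$ in $\rqnf\C$ might a priori fail to be in the image of a single $\IndP$; this is handled by choosing the partition $\mathfrak{P} = \mathfrak{P}_{L \oplus P}$ (the common refinement, which is valid since both summands are quasi noise free), which makes both $L$ and $P$ simultaneously representations over $\Chat$ and in the image of $\IndP$ by Remark~\ref{IndRes equals identity}.
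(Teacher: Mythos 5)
Your argument has a genuine gap at step (1): the claim that $\rqnf\C$ has enough projectives, so that ``hereditary'' reduces to ``every subobject of a projective is projective.'' This is false for a general thread quiver. The representables $\Hom(z,-)$ and the $P_J$ are indeed projective, but to cover a quasi noise free $M$ one may need a direct sum of infinitely many of them, and that direct sum need not be pointwise finite-dimensional. For instance, take $Q$ of type $\mathbb{A}_\infty$ with all arrows oriented toward a terminal vertex $v_0$ and no threading: the representation that is $k$ at every vertex with identity maps is (quasi) noise free, yet any family of representables surjecting onto it must include $P_{v_j}$ for infinitely many $j$, and the sum of these is infinite-dimensional at $v_0$. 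This is exactly why the paper introduces the left/right $Q$-bounded condition: your steps (1)--(5) are essentially the paper's proof of Proposition~\ref{prop:no infinite paths and interval finite implies hereditary}, which carries that hypothesis. Theorem~\ref{thm:quasi noise free is hereditary} has no such hypothesis, so the reduction to subobjects of projectives is not available.

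The paper's actual proof avoids projectives entirely. It shows directly that $\Ext^1(M,-)$ and $\Ext^1(-,M)$ are right exact on $\rqnf\C$: given an epimorphism $g\colon Y\to Z$ and an extension $0\to Z\to E\to M\to 0$, one forms $T=M\oplus E\oplus Z\oplus Y$, takes the valid partition $\mathfrak{P}_T$ with a sample $\M$, restricts everything via the exact, fully faithful $\ResM$ to $\C(\M)$ (the path category of an honest quiver, hence hereditary by \cite{GR97}), lifts the extension there, and applies the exact functor $\IndP$, using $\IndP\ResM\cong\mathrm{id}$ on all four representations by the choice of partition. Two smaller remarks: your worry that $\IndP$ ``need not be exact'' is unfounded --- it is precomposition with $\mathbf{r}$, hence exact, so the adjunction workaround in step (4) is unnecessary; and the arrow-by-arrow ``reduction to $\mathbb{A}_2$'' paragraph does not actually establish anything beyond what Proposition~\ref{thm:quasi noise free is serre} already gives. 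If you restrict to the left or right $Q$-bounded case, your outline is sound and recovers Proposition~\ref{prop:no infinite paths and interval finite implies hereditary}, but it does not prove the theorem as stated.
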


\begin{proof}
    It is sufficient to prove that for a quasi noise free representation $M$, the functors $\Ext^1(M,-)$ and $\Ext^1(-,M)$ are right exact. We only consider the first case, as the other case is similar. Thus, we start with an epimorphism $g: Y \to Z$
    in $\rqnf \C$ and we need to prove that the map $\Ext^1(M,g): \Ext^1(M,Y) \to \Ext^1(M,Z)$ is surjective. Equivalently, we start with a non-split extension
    \[0 \to Z \stackrel{u}{\to} E \stackrel{v}{\to} M \to 0\]
    in $\rqnf \C$ and we need to find a quasi noise free representation $E'$ leading to a commutative diagram
    \[\xymatrix{0 \ar[r] & Y \ar[r] \ar[d]^g & E' \ar[r] \ar[d] & M \ar[r] \ar@{=}[d] & 0 \\
    0 \ar[r] & Z \ar[r]^u  & E \ar[r]^v  & M \ar[r] & 0}\]
Now, we consider the representation $T:=M \oplus E \oplus Z \oplus Y$ and take the corresponding valid partition $\mathfrak{P} = \mathfrak{P}_T$. We choose a set $\mathcal{M}$ of sample points and consider the functor $\ResM = \iota^*: \rpwf \C \to \rpwf \C(\mathcal{M})$. Since this functor is fully faithful exact, we get a short exact sequence
\[\xymatrix@C=8ex{0 \ar[r] & \ResM(Z) \ar[r]^-{\ResM(u)} & \ResM(E) \ar[r]^-{\ResM(v)} & \ResM(M) \ar[r] & 0}\]
with an epimorphism $\ResM(g): \ResM(Y) \to \ResM(Z)$. Since the category $\C(\mathcal{M})$ is equivalent to the category of pointwise finite-dimensional representations of a quiver, it follows from \cite[page 79]{GR97} that $\Ext^1(\ResM(M), \ResM(g))$ is surjective. Therefore, we have a commutative diagram
\[\xymatrix{0 \ar[r] & \ResM(Y) \ar[r] \ar[d]^{\ResM(g)} & U \ar[r] \ar[d] & \ResM(M) \ar[r] \ar@{=}[d] & 0 \\
    0 \ar[r] & \ResM(Z) \ar[r]^{\ResM(u)}  & \ResM(E) \ar[r]^{\ResM(v)}  & \ResM(M) \ar[r] & 0}\]
for some pointwise finite-dimensional representation $U$ in $\C(\mathcal{M})$. Applying the exact functor $\IndP$ to this diagram, and observing that by the choice of $\mathfrak{P}_T$, $\IndP \ResM(M) \cong M, \IndP \ResM(Z) \cong Z, \IndP \ResM(E) \cong E, \IndP \ResM(Y) \cong Y$, we get a commutative diagram
 \[\xymatrix{0 \ar[r] & Y \ar[r] \ar[d]^g & \IndP \ResM(U) \ar[r] \ar[d] & M \ar[r] \ar@{=}[d] & 0 \\
    0 \ar[r] & Z \ar[r]^u  & E \ar[r]^v  & M \ar[r] & 0}\]
    which shows that $\Ext^1(M,g)$ is surjective.
\end{proof}

\begin{example}
    Let us continue Example~\ref{ex:A2}(1).
    Let $M= \bigoplus_{n=1}^\infty M_{(1-\frac{1}{n},1)}$, which is pointwise-finite-dimensional.
    Notice that for each $M_{(1-\frac{1}{n},1)}$ and $M_{(1-\frac{1}{n'},1)}$, we have \[\Ext^1(M_{(1-\frac{1}{n},1)},M_{(1-\frac{1}{n'},1)})=0=\Ext^1(M_{(1-\frac{1}{n'},1)},M_{(1-\frac{1}{n},1)}).\]
    Also note that $\Ext^1(S_1,S_1)=0$ where $S_1$ is the simple representation at vertex $1$.
    Thus, both $M$ and $S_1$ are pointwise finite-dimensional and rigid.

    However, $\Ext^1(M_{(1-\frac{1}{n},1)},S_1)\cong \Bbbk$ for each $n\geq 1$ and so
    \begin{align*}
        \Ext^1(M, S_1) &= \Ext^1\left(\bigoplus_{n=1}^\infty M_{(1-\frac{1}{n},1)}\, ,\, S_1 \right) \\
        &= \prod_{n=1}^\infty \Ext^1\left(M_{(1-\frac{1}{n},1)}\, ,\, S_1 \right) \\
        &= \prod_{n=1}^\infty \Bbbk.
    \end{align*}
    As we can see, for arbitrary pointwise finite-dimensional representations $M$ and $N$, it is possible that $\Ext^1(M,N)$ is infinite-dimensional, even if $M$ and $N$ are both rigid.
\end{example}

\subsection{The left or right $Q$-bounded cases.} 
Without any assumption on the thread quiver $(Q, \mathcal{P})$, we cannot use projective resolutions or injective co-resolutions to study $\rpwf \C$. In this subsection, we impose some restrictions on $Q$ so that projective resolutions or injective co-resolutions can be used. In the left $Q$-bounded case, any indecomposable pointwise finite-dimensional representation $M$ admits an epimorphism $P \to M$ from a pointwise finite-dimensional projective representation. Without the left $Q$-bounded assumption, this is not always possible to find such a morphism. Some dual statements hold in the right $Q$-bounded case.

\begin{proposition}\label{prop:no infinite paths and interval finite implies hereditary}
    Assume that $\C$ is left or right $Q$-bounded. Then $\rpwf \C$ is hereditary.
\end{proposition}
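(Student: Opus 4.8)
The plan is as follows. First, the pointwise dual $D=\Hom_k(-,k)$ is an exact contravariant equivalence between $\rpwf\C$ and $\rpwf(\C^{\rm op})$, where $\C^{\rm op}$ is the path category of the opposite thread quiver; it exchanges the left and right $Q$-bounded conditions, and being hereditary is a self-dual property, so it is enough to treat the left $Q$-bounded case. Since $\Ext^2(\bigoplus_j M_j,-)\cong\prod_j\Ext^2(M_j,-)$ and an arbitrary $M\in\rpwf\C$ is a direct sum of indecomposables (Corollary~\ref{cor:big decomposition}), it suffices to show that every indecomposable pointwise finite-dimensional $M$ has projective dimension at most one in $\rpwf\C$.

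Assume $\C$ is left $Q$-bounded. As recalled before the statement, such an $M$ then admits an epimorphism $\pi\colon P\to M$ with $P$ a pointwise finite-dimensional projective; moreover, by Theorem~\ref{ThmAllProj} and Proposition~\ref{IndecProj}, $P$ can be taken to be a direct sum of representations of the form $P_J$ (a single $P_J$ when $M$ is a noise interval, with $J$ having the same start as that interval). The crux is then to show $K:=\ker\pi$ is projective, and for this I would study the pointwise finite-dimensional subrepresentations of a given $P_J$. On each $\oP$ such a $P_J$ is, by construction, an interval representation with all structure maps isomorphisms, extended along paths of $Q$; hence a subrepresentation has up-closed support on each $\oP$ and, restricted there, is again of interval type, giving a direct summand $P_{J''}$. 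On the part supported on $Q$, where $P_J$ restricts to (a direct limit of) representable projectives, one invokes heredity of pointwise finite-dimensional representations of the quiver $Q$ itself (\cite[page~79]{GR97}) to see that subrepresentations there are projective. Assembling these, $K$ is a direct sum of representations of the form $P_J$, hence projective, so $\mathrm{pd}\,M\le 1$, and therefore $\rpwf\C$ is hereditary.

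The step I expect to be the main obstacle is exactly this structural analysis of subrepresentations of $P_J$, and in particular keeping every representation in sight pointwise finite-dimensional while carrying it out: this is where left $Q$-boundedness is genuinely used, since it is what bounds, at each object of $\C$, the number of arrows of $Q$ contributing to a direct sum of $P_J$'s, so that such sums — and hence $P$, $K$ — remain pointwise finite. A subsidiary point, which also rests on the decomposition theorem and on left $Q$-boundedness and must be established first, is the very existence of the epimorphism $\pi\colon P\to M$ from a pointwise finite-dimensional projective; once both of these are in place, the homological bookkeeping that produces the length-one resolution and, via the product formula above, the vanishing of $\Ext^2$, is formal.
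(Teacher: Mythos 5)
Your outer skeleton matches the paper's: reduce to the left $Q$-bounded case by duality, use the Krull--Remak--Schmidt--Azumaya decomposition and the formula $\Ext^2(\bigoplus_\gamma M_\gamma, N)\cong\prod_\gamma\Ext^2(M_\gamma,N)$ to reduce to indecomposables, produce an epimorphism $P\to M$ from a pointwise finite-dimensional projective (this is exactly where left $Q$-boundedness enters, as in the proof of Theorem~\ref{ThmAllProj}), and then reduce everything to showing that a subrepresentation $L$ of a pointwise finite-dimensional projective $P$ is projective. Up to that point your argument is fine.

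The gap is in the last step, which you yourself flag as ``the main obstacle'' but do not actually carry out, and the sketch you give would not work as stated. First, the confinement of $P_J$ to $\oP$ need not be an interval representation with invertible structure maps: for $j\le y$ in $\oP$ the space $\Hom(j,y)$ has basis $\{\eta_{y,j}\}\cup\{\eta_{y,s(\alpha)}\,p\,\eta_{t(\alpha),j}\}$ over paths $p:t(\alpha)\to s(\alpha)$ in $Q$, so when $Q$ has cycles through $\alpha$ the values of $P_J$ on $\oP$ can have dimension greater than one. Second, and more seriously, there is no ``part supported on $Q$'' to which one could apply heredity of $\rpwf(kQ)$: the kernel $K$ is a representation of the threaded category $\C$, and analyzing its confinements arm by arm does not recover its global module structure across the vertices of $Q$, so ``assembling these'' into a direct sum of $P_{J}$'s is precisely the unproved local-to-global step. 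The paper avoids this entirely: it observes that $L$ is quasi noise free (Proposition~\ref{thm:quasi noise free is serre}), forms the valid partition $\mathfrak{P}_{P\oplus L}$ with a sample $\M$, and applies the exact restriction functor $\ResM$ to land in $\rpwf(\C(\M))$, the representation category of an honest quiver $\Gamma$ ($Q$ with each arrow replaced by a finite linear $\mathbb{A}_n$), where heredity is classical; projectivity of $\ResM(L)$ is then transported back using that $\IndP$ sends the representable projective at a vertex to $P_I$ for the corresponding cell $I$ and that $\IndP\ResM(L)=L$ by the choice of partition. To repair your proof you would either need to reproduce that restriction/induction mechanism or supply a genuine structural classification of pointwise finite-dimensional subrepresentations of direct sums of the $P_J$, neither of which is present in your write-up.
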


\begin{proof}
    We will prove the left $Q$-bounded case. To obtain the right $Q$-bounded case, consider $\rpwf(\C^{\text{op}})$, since $\C^{\text{op}}$ left $Q$-bounded. Then note that for pointwise-finite-dimensional modules, we have a duality $\rpwf\C\to\rpwf(\C^{\text{op}})$ that respects Ext functors. 
    In particular, $\Ext^n(M,N)=0$ when $n\geq 2$ for any pointwise-finite-dimensional modules $M$ and $N$ in either $\rpwf\C$ or $\rpwf(\C^{\text{op}})$.

    Suppose now that $\C$ is left $Q$-bounded. Since $\rpwf\C$ is Krull--Remak--Schmidt--Azumaya we know that every object is a (unique) direct sum of indecomposable representations.
    Furthermore, we know
    \[\Ext^2\left(\bigoplus_\gamma M_\gamma , N\right) \cong \prod_\gamma \Ext^2 (M_\gamma , N).\]
    Thus, it is sufficient to show that $\Ext^2 (M,N)=0$ whenever $M$ is indecomposable. In order to do so, it suffices to prove that any indecomposable representation $M$ admits a projective resolution of two terms in $\rpwf \C$.

    As shown in the proof of Proposition \ref{ThmAllProj}, for each noise free representation $M$, there is a projective representation $P$ in $\rpwf \C$ with an epimorphism $P \to M$.
    Hence, it suffices to show that if $P \in \rpwf \C$ is projective and $L$ is a subrepresentation of $P$, then $L$ is projective. Let $P \in \rpwf \C$ be projective. It follows from Proposition \ref{ThmAllProj} that all projective indecomposable representations are noise free.
    Hence, $P$ being a direct sum of noise free representations is also noise free, by the locally finite-dimensional property.
    The representation $L$ may not be noise free but must be quasi noise free (Proposition~\ref{thm:quasi noise free is serre}).
    Thus $\mathfrak{P}_L$ is valid. Consider $\mathfrak{P}:=\mathfrak{P}_{P\oplus L}$, having cells given by the intersections of the cells of $\mathfrak{P}_L$ with those of $\mathfrak{P}_P$ (see page~\pageref{partition of the direct sum}).

    Let us now use a set $\mathcal{M}$ of sample points from $\mathfrak{P}$ and consider the functors $\ResM: \rpwf\C \to \rpwf(\C(\mathcal{M}))$ and $\IndP: \rpwf(\C(\mathcal{M})) \to \rpwf\C$ as defined in the previous section on page~\pageref{res and ind}. Recall that $\C(\mathcal{M})$ can be identified with a quiver $\Gamma$ obtained from $Q$ by replacing each arrow by a finite linearly ordered quiver of type $\mathbb{A}$. It is clear that $\ResM$ sends projectives to projectives. Since $\ResM$ is exact, we get a monomorphism $\ResM(L) \to \ResM(P)$ where $\ResM(P)$ is a projective representation of $\Gamma$. Since $\rpwf(\Gamma)$ is hereditary by \cite[Page 79]{GR97}, it follows that $\ResM(L)$ is projective. Now, observe that if $P_x$ denotes the projective representation at vertex $x$ of $\Gamma$, then $\IndP(P_x) = P_I$ where $I$ is the unique interval from the partition $\mathfrak{P}$ containing $x$. Hence, we see that $\IndP$ sends projectives to projectives. By the choice of our partition $\mathfrak{P}$, we see that $\IndP \ResM(L) = L$, hence $L$ is projective as desired.
\end{proof}

The categories of finitely generated (by \emph{all} projectives in $\rpwf\C$) representations and that of finitely presented (by only representable projectives) both have enough projectives.
When $Q$ is finite, the subcategory $\rqnf\C$ of $\rpwf\C$ is generated by finite sums of \emph{all} projectives.
Therefore, by the proof of Proposition~\ref{prop:no infinite paths and interval finite implies hereditary}, these are also hereditary.

We do not know if $\rpwf\C$ is always hereditary.  We recall that thread quivers behave slightly differently than quivers. For instance, we have seen that for some thread quivers, the corresponding category $\Rep \,\C$ of all representations of $\C$ is not hereditary. However, for pointwise-finite-dimensional representations, we believe that the hereditary property should hold. We pose this as a conjecture as follows.

\begin{conjecture}\label{con:pwf hereditary}
    Let $\C$ be the path category of a thread quiver. Then $\rpwf\C$ is hereditary.
\end{conjecture}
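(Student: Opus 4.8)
The plan is to reduce the statement $\mathrm{Ext}^n_{\rpwf\C}(M,N)=0$ for $n\ge 2$ to the hereditary case of a quiver (known from \cite{GR97}), exactly as in the proof of Proposition~\ref{prop:no infinite paths and interval finite implies hereditary}, but now \emph{without} the $Q$-bounded hypothesis. First I would reduce to showing $\mathrm{Ext}^2(M,N)=0$ with $M$ indecomposable (since $\rpwf\C$ is Krull--Remak--Schmidt--Azumaya and $\mathrm{Ext}^2$ turns coproducts in the first variable into products), and by Theorem~\ref{thm:noise and noise free decomposition} I would split into two cases: $M$ noise (supported inside a single $\oP$) and $M$ noise free. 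The noise case should follow from Theorem~\ref{thm:c-b}: a noise indecomposable is an interval representation on a linearly ordered set, and one computes its $\mathrm{Ext}^2$ into an arbitrary pwf $N$ directly, or localizes the situation to an $\mathbb{A}_2$-type thread quiver where the relevant Ext vanishing is classical.

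For the noise free case I would try to mimic the $\ResM/\IndP$ dévissage. Given a short exact sequence $0\to N\to E\to M\to 0$ and an arbitrary epimorphism $g\colon Y\twoheadrightarrow N$, form $T = M\oplus E\oplus N\oplus Y$ and the valid partition $\mathfrak{P}=\mathfrak{P}_T$ (valid because each summand is quasi noise free by Proposition~\ref{thm:quasi noise free is serre}), pick a sample $\M$, and apply the exact fully faithful functor $\ResM$. In $\rpwf\C(\M)$ — the representation category of a quiver obtained by threading each arrow with a finite $\mathbb{A}_n$ — the hereditary property of \cite{GR97} gives a lift, and applying the exact functor $\IndP$ and using $\IndP\ResM(X)\cong X$ for $X\in\{M,E,N,Y\}$ (Remark~\ref{IndRes equals identity}) transports the lift back. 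This would show $\mathrm{Ext}^1(M,g)$ is surjective for every such $g$, hence $\mathrm{Ext}^2(M,-)=0$.

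The main obstacle is that \emph{without} left or right $Q$-boundedness the functor $\IndP$, while fully faithful and exact, does \emph{not} obviously send an epimorphism onto $N$ in $\rpwf\C(\M)$ back to something controllable, and more seriously the comparison used in Proposition~\ref{prop:no infinite paths and interval finite implies hereditary} relied on projective resolutions in $\rpwf\C$, which may fail to exist. So the genuinely new input needed is a way to produce, for a noise free $M$ and an arbitrary extension $0\to N\to E\to M\to 0$ with $N$ quasi noise free, a suitable ``resolving'' object directly inside $\rpwf\C$ — for instance showing $\mathrm{Ext}^2(M,N)$ is detected after restricting to the subcategory $\C(\M)$ for a well-chosen $\mathfrak{P}$, i.e. that $\mathrm{Ext}^i_{\rpwf\C}(M,N)\cong\mathrm{Ext}^i_{\rpwf\C(\M)}(\ResM M,\ResM N)$ whenever $\mathfrak{P}$ refines $\mathfrak{P}_M$ and $\mathfrak{P}_N$. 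Establishing this isomorphism in degree $2$ (equivalently, that $\IndP$ induces isomorphisms on $\mathrm{Ext}^i$ between representations constant on cells) is where the real work lies; if it holds, heredity of the quiver category $\C(\M)$ finishes the argument. I expect this Ext-comparison — controlling $\mathrm{Ext}^2$ across the localization in the unbounded setting — to be the hard part, and it is presumably exactly why the authors leave it as a conjecture rather than a theorem.
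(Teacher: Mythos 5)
The statement you are trying to prove is left \emph{open} in the paper: it is stated as Conjecture~\ref{con:pwf hereditary}, and the authors explicitly say they do not know whether $\rpwf\C$ is always hereditary. What the paper actually proves are the partial results your plan is modelled on: Theorem~\ref{thm:quasi noise free is hereditary} (the subcategory $\rqnf\C$ is hereditary) and Proposition~\ref{prop:no infinite paths and interval finite implies hereditary} (the left or right $Q$-bounded case). Your proposal reproduces that dévissage but does not close the gap that separates those results from the conjecture, and you candidly say so yourself. So, to be clear: this is not a proof, and there is no proof in the paper to compare it against.

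The concrete obstruction is the second variable of $\Ext$. Your reduction via $\Ext^2\bigl(\bigoplus_\gamma M_\gamma, N\bigr)\cong\prod_\gamma\Ext^2(M_\gamma,N)$ lets you assume $M$ is indecomposable, hence quasi noise free — but it gives you no control over $N$, nor over the middle term $E$ of an extension or the cover $Y$. A general pointwise finite-dimensional $N$ (e.g.\ $\bigoplus_{n\ge 1}M_{(1-\frac1n,1)}$ from the paper's own example) is \emph{not} quasi noise free, so $\mathfrak{P}_N$, and hence $\mathfrak{P}_{M\oplus E\oplus N\oplus Y}$, is not a valid partition: there is no sample $\M$, no $\ResM$, no $\IndP$, and the entire restriction--induction comparison is unavailable. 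There is also no dual reduction in the second variable, since $\Ext^2(M,-)$ converts products, not coproducts, into products, and an infinite direct sum of indecomposables is not their product. Your proposed fix — an isomorphism $\Ext^i_{\rpwf\C}(M,N)\cong\Ext^i_{\rpwf\C(\M)}(\ResM M,\ResM N)$ for $\mathfrak{P}$ refining $\mathfrak{P}_M$ and $\mathfrak{P}_N$ — already presupposes that such a common refinement is valid, i.e.\ that $N$ is quasi noise free, which is exactly the case Theorem~\ref{thm:quasi noise free is hereditary} settles. The noise case has the same defect: even when $M$ is an interval representation supported in one $\mathcal{P}_\alpha$, a Yoneda $2$-extension $0\to N\to E_1\to E_2\to M\to 0$ can have middle terms supported anywhere on the thread quiver, so the computation does not localize to an $\mathbb{A}_2$-type thread quiver. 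Until you find an argument that handles targets $N$ with infinitely many noise summands on a single arrow, the conjecture remains open.
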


\begin{remark}
    Notice that the discrete $\mathbb{A}_\infty$ quiver (without sinks and sources) fails the hypothesis Proposition \ref{prop:no infinite paths and interval finite implies hereditary} because it has an infinite path terminating at a vertex.
    However, we know the category of pointwise finite-dimensional representations of any thread quiver of type $\mathbb{A}_\infty$ is hereditary \cite{IRT22}.
    Thus, the converse of Proposition~\ref{prop:no infinite paths and interval finite implies hereditary} is not true.
\end{remark}

Denote by $\rfp\C$ the full subcategory of $\rpwf\C$ consisting of finitely presented representations (objects generated by only representable projectives).

\begin{proposition} \label{prop:fp_hereditary}
    Assume that $\C$ is left $Q$-bounded. Then the category $\rfp\C$ is hereditary, abelian, and closed under extensions.
\end{proposition}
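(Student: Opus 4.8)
The plan is to derive everything from the heredity of $\rpwf\C$ established in Proposition~\ref{prop:no infinite paths and interval finite implies hereditary} (which applies since $\C$ is left $Q$-bounded) together with the purely formal bookkeeping of finitely presented objects; no further analysis of the thread quiver is required. First I would record the easy facts: $\rfp\C$ contains the representable projectives $\Hom_\C(x,-)$, which are projective objects of $\rpwf\C$; it is closed under finite direct sums (a direct sum of finite presentations is a finite presentation); and it has enough projectives, namely the representable ones. Closure under cokernels is the standard argument: for $f\colon M\to N$ with $M,N$ finitely presented, the cokernel of $f$ is the quotient of $N$ by the finitely generated subobject $\operatorname{im} f$, so a finite presentation of $N$ together with lifts of finitely many generators of $\operatorname{im} f$ presents the cokernel.

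The crucial step is a coherence statement: every finitely generated subobject $L$ of a finitely presented object is finitely presented. This is where heredity enters. If $P$ is a finite direct sum of representable projectives, then $P$ is a projective object of $\rpwf\C$, and since $\rpwf\C$ is hereditary, every subobject of $P$ lying in $\rpwf\C$ is again projective there; in particular a finitely generated subobject $L'\subseteq P$ is projective. A finitely generated projective object is automatically finitely presented: an epimorphism $P'\twoheadrightarrow L'$ from a finite direct sum $P'$ of representable projectives splits, so $P'\cong L'\oplus K$ with $K$ a summand, hence a quotient, of $P'$, so $K$ is finitely generated and $0\to K\to P'\to L'\to 0$ exhibits $L'$ as finitely presented. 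For general finitely presented $M$ with presentation $0\to K_M\to P_0\to M\to 0$ and finitely generated $L\subseteq M$, pulling $L$ back to $\widetilde L\subseteq P_0$ gives a finitely generated (hence, by the case just treated, finitely presented) subobject of $P_0$ with $L\cong\widetilde L/K_M$, so $L$ is finitely presented as a quotient of a finitely presented object by a finitely generated subobject.

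With coherence available, closure under kernels is routine: for $f\colon M\to N$ in $\rfp\C$, the subobject $\operatorname{im} f\subseteq N$ is finitely presented by coherence, and $\ker f$ is the kernel of the epimorphism $M\twoheadrightarrow\operatorname{im} f$ between finitely presented objects, which is finitely presented by Schanuel's lemma. Thus $\rfp\C$ is a full subcategory of the abelian category $\rpwf\C$ closed under finite direct sums, kernels and cokernels, hence abelian with exact inclusion. Closure under extensions is the horseshoe argument: given $0\to A\to B\to C\to 0$ with $A,C$ finitely presented, lift a finite representable-projective cover of $C$ through $B$ and combine with one of $A$ to obtain an epimorphism $P_0^A\oplus P_0^C\twoheadrightarrow B$ whose kernel is an extension of the finitely generated kernels of $P_0^A\to A$ and $P_0^C\to C$, hence finitely generated, so $B$ is finitely presented.

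Finally, for heredity of $\rfp\C$: given finitely presented $M$, choose an epimorphism $P_0\twoheadrightarrow M$ with $P_0$ a finite direct sum of representable projectives; its kernel $P_1$ is finitely generated since $M$ is finitely presented, it is a subobject of a projective object of the hereditary category $\rpwf\C$ and hence projective there, and it is finitely presented by the coherence step — hence $P_1\in\rfp\C$, and being projective in $\rpwf\C$ it is a fortiori projective in $\rfp\C$. So every object of $\rfp\C$ has a length-one projective resolution in $\rfp\C$, which means $\rfp\C$ is hereditary. The only genuinely non-formal ingredient is the coherence statement, and it rests solely on the heredity of $\rpwf\C$ from Proposition~\ref{prop:no infinite paths and interval finite implies hereditary}; the real obstacle is already handled there, and the remaining work is simply to assemble these standard consequences.
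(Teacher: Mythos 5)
Your argument is correct and rests on the same key step as the paper's proof: heredity of $\rpwf\C$ (from Proposition~\ref{prop:no infinite paths and interval finite implies hereditary}) forces the kernel of a morphism between finitely generated projectives to be projective, hence a direct summand of a finitely generated projective, hence itself finitely generated projective. The only difference is presentational --- the paper deduces abelianness from that single fact via Auslander's criterion, whereas you verify closure under kernels, cokernels and extensions by hand through the coherence statement; both routes are sound and carry the same mathematical content.
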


\begin{proof}
    It is clear that $\rfp\C$ is closed under extension. Hence, the hereditary property follows. To prove that $\rfp\C$ is abelian, it follows from a result of Auslander (see \cite[Prop. 2.1]{A66}) that it suffices to prove that if $f: P_2 \to P_3$ is a morphism of finitely generated projective representations, then there is an exact sequence $P_1 \stackrel{g}{\to} P_2 \stackrel{f}{\to} P_3$ of finitely generated projective representations. First, we consider the image of $f$. Being a subrepresentation of $P_3$, and since $\rpwf \C$ is hereditary, it follows that ${\rm Im}f$ is projective. Now, we consider the short exact sequence
    $$0 \to K \to P_2 \to {\rm Im}f \to 0$$
    where $K$ is the kernel of the projection of $P_2$ onto the image of $f$. This sequence splits which means that $K$ is a direct summand of $P_2$. By the Krull--Remak--Schmidt property, that means that $P_1:=K$ if finitely generated projective, as desired.
\end{proof}



 Note that in \cite{BvR14}, the authors have proven that when the thread quiver $(Q, \mathcal{P})$ is locally discrete, then the category of finitely presented representations of $\C$ is hereditary. In the language of the present paper, locally discrete means that for any arrow $\alpha \in Q_1$ and $x \in \oP$, $x$ has an immediate successor in $\oP$ if $x \ne t(\alpha)$, and has an immediate predecessor in $\oP$ if $x \ne s(\alpha)$.

\begin{example}
In this example, we let $Q$ be a quiver of type $\mathbb{D}_\infty$, that is, an orientation of the following graph.

$$\xymatrix@R=1ex{1 \ar@{-}[dr]^\alpha &&&& \\ & 3 \ar@{-}[r]^{\sigma_1} & 4 \ar@{-}[r] & \cdots \ar@{-}[r]^{\sigma_{n-2}} & \cdots \infty\\
2 \ar@{-}[ur]_\beta &&&&}$$

We assume that $\mathcal{P}_\alpha$ and $\mathcal{P}_\beta$ are both empty while the $\mathcal{P}_{\sigma_i}$ are arbitrary. We consider the path category $\C$ of the corresponding thread quiver. We note that $\C$ is either left $Q$-bounded or right $Q$-bounded. Hence, it follows from Proposition \ref{prop:no infinite paths and interval finite implies hereditary} that $\rpwf \C$ is hereditary.

Observe that for any sample $\M$, the category $\rpwf (\C (\mathcal{M}))$ is equivalent to the path category of a quiver $\Gamma_\M$ which is of type $\mathbb{D}_\infty$. Pointwise-finite indecomposable dimensional representations of a quiver of type $\mathbb{D}_\infty$ are classified; see for example \cite{BLP,Y17,MRZ22}. Such an indecomposable representation is either supported on a finite subquiver of it, or else, the maps are isomorphisms between one-dimensional vector spaces for almost all arrows.

We can then use the $\IndP$ and $\ResM$ functors to classify all our indecomposable pointwise-finite-dimensional representations of $\C$ in this case.
We also remark that $\rpwf \C$ is essentially finite, hence also virtually finite.

\end{example}

\bibliographystyle{alpha}
\bibliography{ref}

\end{document}